\numberwithin{equation}{section}
\renewcommand{\Im}{{\operatorname{Im}\,}}
\renewcommand{\Re}{{\operatorname{Re}\,}}
\newcommand{\ord}{\operatorname{ord}\nolimits}
\newcommand{\CC}{{\mathbb{C}}}
\newcommand{\PP}{{\mathbb{P}}}
\newcommand{\RR}{{\mathbb{R}}}
\newcommand{\ZZ}{{\mathbb{Z}}}
\newcommand{\SSS}{{\mathbb{S}}}
\newcommand{\calC}{{\mathcal C}}
\newcommand{\calK}{{\mathcal K}}
\newcommand{\calM}{{\mathcal M}}
\newcommand{\calE}{{\mathcal E}}
\newcommand{\calS}{{\mathcal S}}
\newcommand\Res{\operatorname{Res}}
\newcommand\us{{\underline{s}}}
\newcommand\urho{{\underline{\rho}}}
\newcommand\tC{{\tilde{C}}}
\newcommand\uE{{\underline{{\mathcal E}}}}
\newcommand\uc{{\underline{c}}}
\newcommand\uf{{\underline{f}}}
\newcommand\uh{{\underline{h}}}
\newcommand\ug{{\underline{g}}}
\newcommand\uphi{{\underline{\phi}}}
\newcommand\PS[1][N-1]{{\SSS_+^{#1}}}
\newcommand\pbs{positive blow-up of the sphere}
\newcommand\RHP{jump problem\xspace}
\newcommand\PsiDP{\Psi_{D_k}^+}
\newcommand\PsiDM{\Psi_{D_k}^-}
\theoremstyle{plain}
\newtheorem{thm}{Theorem}[section]
\newtheorem{lm}[thm]{Lemma}
\newtheorem{prop}[thm]{Proposition}
\newtheorem{cor}[thm]{Corollary}
\newtheorem{fact}[thm]{Fact}
\theoremstyle{definition}
\newtheorem{df}[thm]{Definition}
\newtheorem{notat}[thm]{Notation}
\newtheorem{rem}[thm]{Remark}
\begin{document}
\title{Real-normalized differentials: limits on stable curves}
\author{Samuel Grushevsky}
\address{Mathematics Department, Stony Brook University,
Stony Brook, NY 11794-3651, USA}
\email{sam@math.stonybrook.edu}
\thanks{Research of the first author was supported in part by National Science Foundation under the grant DMS-15-01265, and by a Simons Fellowship in Mathematics (Simons Foundation grant \#341858 to Samuel Grushevsky)}
\author{Igor Krichever}
\address{Columbia University, New York, USA, and Skolkovo Institute for Science and Technology, and National Research University Higher School of Economics, and Kharkevich Institute for Information Transmission Problems, and
Landau Institute for Theoretical Physics, Moscow, Russia}
\email{krichev@math.columbia.edu}
\author{Chaya Norton}
\address{Concordia University, Montreal, QC, Canada, and Centre de Recherches Math\'ematiques (CRM), Universit\'e de Montr\'eal, QC, Canada}
\email{chaya.norton@concordia.ca}

\begin{abstract}
We study the behavior of real-normalized (RN) meromorphic differentials on Riemann surfaces under degeneration. We describe all possible limits of RN differentials on any stable curve. In particular we prove that the residues at the nodes are solutions of a suitable Kirchhoff problem on the dual graph of the curve. We further show that the limits of zeroes of RN differentials are the divisor of zeroes of a twisted differential --- an explicitly constructed collection of RN differentials on the irreducible components of the stable curve, with higher order poles at some nodes.

Our main tool is a new method for constructing differentials (in this paper, RN differentials, but the method is more general) on smooth Riemann surfaces, in a plumbing neighborhood of a given stable curve. To accomplish this, we think of a smooth Riemann surface as the complement of a neighborhood of the nodes in a stable curve, with boundary circles identified pairwise. Constructing a differential on a smooth surface with prescribed singularities is then reduces to a construction of a suitably normalized holomorphic differential  with prescribed ``jumps'' (mismatches along the identified circles). We solve this additive analog of the multiplicative Riemann-Hilbert problem in a new way, by using iteratively the Cauchy integration kernels on the irreducible components of the stable curve, instead of using the Cauchy kernel on the plumbed smooth surface. As the stable curve is fixed, this provides explicit estimates for the differential constructed, and allows a precise degeneration analysis.
\end{abstract}

\maketitle

\section*{Introduction}
A smooth {\em jet curve} $X$ is a Riemann surface $C$ with distinct marked points $p_1,\dots,p_n\in C$, and with prescribed singular parts $\sigma_1,\dots,\sigma_n$ of a meromorphic differential at these points. If each prescribed residue $r_\ell$ at each $p_\ell$ is purely imaginary, and $\sum r_\ell=0$, then there exists a unique meromorphic differential $\Psi$ on $C$ with singular part $\sigma_\ell$ at each $p_\ell$, holomorphic on $C\setminus\lbrace p_1,\ldots,p_n\rbrace$, and such that all periods of~$\Psi$ are real. This differential is called the {\em real-normalized (RN)} meromorphic differential, and this paper is one in a series investigating its properties and using it to study the geometry of the moduli space of curves. Here we focus on the behavior of the RN differential as the Riemann surface degenerates to a stable curve.

\subsection*{The jump problem}
Our main technical tool is a new analytic method for studying the behavior of differentials on Riemann surfaces under degeneration. This is done by working explicitly in plumbing coordinates, and we need to introduce some notation to describe it; this setup will be defined in full detail in section~\ref{sec:plumbing}. Fix a nodal curve $C$; its dual graph $\Gamma$ has vertices, denoted $v$, corresponding to irreducible components $C^v$ of the normalization of~$C$, and (unoriented) edges, which we denote $|e|$, corresponding to nodes $q_{|e|}\in C$. We will write $e$ for edges of $\Gamma$ together with a choice of orientation. If an oriented edge $e$ starts from a vertex $v$, we say that it corresponds to a preimage $q_e\in C^v$ of a node $q_{|e|}\in C$. We write $-e$ for the edge $e$ with the opposite orientation, $|e|$ for the corresponding unoriented edge, and write $E$ and $|E|$ for the sets of oriented and unoriented edges, respectively.

Plumbing gives a way to understand versal deformations of $C$ in the Deligne-Mumford compactification --- that is, coordinates on $\overline{\calM}_{g,n}$ transverse to the boundary stratum containing~$C$.
To define plumbing coordinates $\us=(s_1,\ldots,s_{\#|E|})$, one fixes once and for all a local coordinate $z_e$ on the normalization of $C$ near each $q_e$. Let $\widehat C_\us$ be the complement in $C$ of the union of the disks $\lbrace |z_e|<\sqrt{|s_{|e|}|}\rbrace$ around each~$q_e$. Then $\widehat C_\us$ is a Riemann surface with boundary components $\gamma_e:=\lbrace |z_e|=\sqrt{|s_e|}\rbrace$. The compact Riemann surface $C_\us$ is obtained from~$\widehat C_\us$ by identifying each pair of  boundaries $\gamma_e$ and $\gamma_{-e}$ via the map $I_e:z_e\mapsto s_{|e|}/z_e$, to form the seam $\gamma_{|e|}\subset C_\us$. The complex structure on $C_\us$ is obtained by declaring a function on $C_\us$ to be holomorphic if it is holomorphic outside of all seams, and continuous on each seam;  $C_\us$ is smooth if and only if each $s_{|e|}$ is non-zero.

Then a differential on $C_\us$ is the same as a differential on $\widehat C_\us$ such that its boundary values on $\gamma_e$ and $\gamma_{-e}$ match under the pullback by $I_e$; this is to say, there is no ``jump'' on the seam $\gamma_{|e|}$. Our approach to constructing such a differential on $C_\us$ with prescribed singular parts is novel. We start with a collection of meromorphic differentials on the irreducible components $C^v$ with prescribed singular parts. Then of course the boundary values of this collection on the seams do not agree, so there are non-zero ``jumps''. We then construct explicitly a suitably normalized collection of holomorphic differentials on $C^v$, such that their jumps are precisely equal to those of the original collection of meromorphic differentials. Subtracting this collection of holomorphic differentials from the collection of meromorphic differentials then gives a differential on $\widehat C_\us$ with no jumps, i.e.~a meromorphic differential on the smooth surface $C_\us$ with prescribed singularities.

The problem of constructing a differential with prescribed jumps is an additive analog of a well-known general problem, known variously as the Riemann-Hilbert problem, or the Riemann boundary value problem. We use the name {\em jump problem} for the version of the problem that is relevant for us. It is the question of constructing a suitably normalized differential on a Riemann surface with boundary, with prescribed differences of boundary values on pairwise identified boundary circles. Equivalently, this is the problem of constructing a differential on a compact Riemann surface, defined on the complement of a set of disjoint closed loops, with prescribed jumps from one side of each loop to the other side. As a matter of language, we will talk about the jump problem either on $\widehat C_\us$ or on $C_\us$, as is more convenient in each case.

Classically (see~\cite{rodin},\cite{zverovich}), the jump problem is solved by integrating the jumps with respect to the suitably normalized Cauchy integration kernel on the surface, i.e.~on $C_\us$ in our case. In this classical approach it appears very difficult to determine the behavior of the solution under degeneration, as the Cauchy kernel varies with $\us$, and degenerates as $\us\to 0$. The technical core of our paper is a new method for solving the jump problem, which allows explicit estimates for the solution under degeneration.

Instead, we view $\widehat C_\us$ as a subset of the normalization of $C$, and try to obtain the solution of the jump problem by integrating with respect to the suitably normalized Cauchy kernels on $C^v$, which are thus {\em independent} of $\us$. We then compute the jumps on the seams of $C_\us$ of the convolution of arbitrary initial data on the seams with the Cauchy kernels on $C^v$. While the jumps on $\gamma_{|e|}$ of this convolution are of course not equal to the initial data, the condition for the jumps to be equal to the prescribed data amounts to an integral equation on the functions that are convolved with the Cauchy kernels on $C^v$. We show that this integral equation can be solved, by showing that the norm of the corresponding integral operator is sufficiently close to zero, so that it can be formally inverted, as a sum of an iteratively defined series.


Our main technical result is this construction in plumbing coordinates, Proposition~\ref{prop:RH}, and the bound for it, Proposition~\ref{ARNPJP}. This allows us to construct and estimate the RN differentials in an entire neighborhood of $C$ in the moduli space.

While we apply this machinery to study the limits of RN differentials and their zeroes, it can also be used for example to study the behavior of a normalized basis of holomorphic differentials. Hu and the third author~\cite{hunorton} used our approach to the jump problem to extend and reprove the results of Yamada on degenerations of period matrices.

\subsection*{Limits of RN differentials}
We use this explicit construction of RN differentials to understand their degenerations, in plumbing coordinates. Our first result is on limits of RN differentials with arbitrary residues.  As our setup is real-analytic, we state it for degenerating sequences (not families). Let $\lbrace X_k\rbrace$ be a sequence of smooth jet curves converging to a stable jet curve $X$, with underlying smooth curves $C_k$, with plumbing coordinates $\us_k$, converging to a nodal curve $C$, whose dual graph is $\Gamma$. The rough version of our first result is as follows.

\begin{thm}[=Theorem~\ref{thm:limits}+Proposition~\ref{prop:compactness}]\label{thm:limitsrough}
Let $\lbrace X_k\rbrace$ be a sequence of smooth jet curves converging to a stable jet curve $X$. The limit RN differential $\Psi:=\lim_{k\to\infty}\Psi_k$ exists if and only if the solutions of the flow Kirchhoff problem on~$\Gamma$, with inflows $ir_{\ell,k}$ and resistances $\log|\us_k|$, converge. If the limit $\Psi$ exists, then  $\Psi|_{C^v}$  is the RN differential with prescribed singularities at those marked points $p_\ell$ that lie on $C^v$, and with simple poles at the preimages of the nodes, with residues given by the limit of solutions of the flow Kirchhoff problem on $\Gamma$, with  inflows $ir_{\ell}$ and resistances $\log |\us_k|$.
\end{thm}
Thus the existence of the limit RN differential is controlled by the existence of the limit of solutions of the flow Kirchhoff problem on the dual graph of the stable curve (see Definition~\ref{df:kirchhoff} for the precise general statement of the Kirchhoff problem). Surprisingly, it seems that the classical problem of determining and parameterizing all possible limits of solutions of the flow Kirchhoff problem as some resistances approach zero has not been addressed previously. In lemma~\ref{lm:multiKirchhoff} we show that if the resistances (i.e., in our case, $\log|\us_k|$) converge in a suitable iterated real oriented blowup $\PS[\#|E|-1]$ of the non-negative sector of the real sphere $S^{\#|E|-1}:=\left(\RR_{\ge 0}^{\#|E|}\setminus\lbrace 0\rbrace \right)/\RR_{>0}$ (see definition~\ref{df:PS}), then the solutions of the flow Kirchhoff problem converge. We will call such degenerating sequences {\em admissible}, and will show that in an admissible sequence the limit of solutions of the flow Kirchhoff problem is given by the solution of what we call the multi-scale Kirchhoff problem (definition~\ref{df:multiKirchhoff}), with resistance given as a point in $\PS[\#|E|-1]$. The full notation is rather involved, and we thus postpone the precise statement of the theorem to section~\ref{sec:resultslimits}, where it appears as theorem~\ref{thm:limits}, stated using the notation on the Kirchhoff problem, developed in section~\ref{sec:kirchhoff}.

This theorem on limits of RN differentials is proven by applying the jump problem. We start with a collection of RN differentials on the irreducible components $C^v$ of the nodal curve $C$, whose residues at the nodes are given by solutions to the limiting multi-scale flow Kirchhoff problem. We then construct the solution of the jump problem with a normalization condition which ensures that all periods over cycles which do not intersect a neighborhood of the nodes is real. We finally find an explicit perturbation of the residues, as a series expansion in plumbing parameters (see~\ref{c-series}), such that the resulting differential is actually RN. The final part of the argument relies on the estimates for the solution of  the jump problem ensuring these terms disappear in the limit.

\subsection*{Limits of zeroes of RN differentials}
The zeroes of differentials play a crucial role  in various questions on moduli. In Teichm\"uller dynamics one studies the orbits on the stratum: the moduli of Riemann surfaces together with a holomorphic differential with a prescribed configuration of zeroes. For possible applications to Teichm\"uller dynamics, and for applications of common zeroes of RN differentials to cusps of plane curves in our upcoming work~\cite{grkrcusps},  it is natural to study the limits of zeros of differentials under degeneration. The difficulty is that the limit differential $\Psi$ may be identically zero on some irreducible component $C^v$. Algebro-geometrically, one approaches this by considering aspects of limit linear series --- which, however, are not yet fully developed for an arbitrary stable curve, though see~\cite{osserman} for recent progress. In~\cite{strata} the problem is dealt with by deforming, in plumbing coordinates, or using flat surface constructions, differentials on irreducible components of the stable curves that have zeroes as prescribed.

Our approach to locating the zeroes of RN differentials is again via the jump problem, with a further improvement resulting from starting from a better approximation of the solution. Indeed, to determine limits of RN differentials, we started with a collection of RN differentials on $C^v$ that we postulate the limit to be, and then construct the differentials in a neighborhood by using the jump problem. In doing this, we could be starting with an identically zero differential on some~$C^v$, if that is what the limit RN differential on $C^v$ is. Instead, we now start with a collection of non-identically zero RN differentials on $C^v$ which provide a better approximation to the full RN differential on $C_\us$. The key estimate~\ref{boundomegaelll} implies that the solution of the jump problem posed with these improved approximations vanishes to higher order than scaling required to determine the first non-zero term, and thus the location of the zeroes in the limit.

As before, we then need to obtain a bound showing that the solution of the jump problem is smaller than the original RN differentials, and thus disappears in the limit.
For this to be the case, we need to ensure that the suitably rescaled RN differentials locally near $q_e$ and $q_{-e}$ are such that the singular part of the differential on one side cancels the lowest order terms of the (holomorphic) differential on the other. This is the concept of so-called balanced differentials, developed in section~\ref{sec:zeroes}.

We will call a sequence of degenerating smooth jet curves {\em jet-convergent} if the singular parts at the nodes of the differentials constructed using this balanced condition converge, after suitable rescaling (see definition~\ref{df:jetconv} for the precise statement). As the limits of differentials are unchanged under rescaling, it will follow that the limits of zeroes of RN differentials exist in jet-convergent sequences, and are the zeroes of the originally taken collection of not identically zero RN differentials on $C^v$.

A rough statement of our main result is thus the following.
\begin{thm}[=Theorem~\ref{thm:zeroes}+Corollary~\ref{cor:cpt}]\label{thm:zeroesrough}
Any admissible sequence $\lbrace X_k\rbrace$ of smooth jet curves converging to a stable jet curve $X$ has a jet-convergent subsequence. For any jet-convergent sequence the limits of {\em zeroes} of RN differentials exist. These limits of zeroes are the divisor of zeroes on $X$ of a twisted RN differential constructed from the jet-convergent subsequence. In particular, the residues of the twisted RN differential arise from a suitable force Kirchhoff problem.
\end{thm}
By a twisted RN differential here we mean a collection of RN differentials $\Phi^v$ on the irreducible components $ C^v$, with prescribed singularities at $p_\ell$, and with higher order poles at some preimages of the nodes. The divisor of zeroes of such a twisted differential is the set of all its zeroes, with multiplicity, away from the nodes, together with the set of nodes counted with suitable multiplicities. The precise statement of this results requires developing the notion of balanced differentials, and related machinery, and obtaining the necessary bounds for the solution of the jump problem. This is done in section~\ref{sec:zeroes}, where the precise version of our main result is given as theorem~\ref{thm:zeroes}.

Since our construction approximates the RN differential on any jet curve in the neighborhood of a given stable jet curve, it describes {\em all} possible limits of zeroes, and thus in fact constructs a compactification of the moduli space of jet curves onto which the limits of zeroes of RN differentials exists. This compactification can be described as a suitable real blowup, and merits an independent study.

\subsection*{Related work}
The question of describing the closures of strata of Riemann surfaces together with a meromorphic differential is currently under intense investigation, eg.~in~\cite{gendron,chen,fapa,strata}, and the answer there is also in terms of twisted meromorphic differentials on the stable curve.

However, our analytic approach via the jump problem is completely different from the methods employed there, and in particular allows us to describe the RN differential with arbitrary precision on {\em any} smooth jet curve in a plumbing neighborhood. As a byproduct we get for example an explicit description of the residues in the limit via the Kirchhoff problem.

\subsection*{History of the project}
The integral $F:=\Im\int\Psi$ of the RN differential is a single-valued harmonic function on $C\setminus \lbrace p_1,\ldots,p_n\rbrace$. In this guise, as harmonic functions which are potentials of the electromagnetic field created by point charges at the marked points, the RN differentials with simple poles have been studied since at least the time of Maxwell.
A variant of the general notion of RN differentials already appears in \cite{schifferbook}, while their study in full generality was initiated by the second author in~\cite{krfinitegap} and in  \cite{kraveraging}, where the relationship with the Whitham perturbation theory of soliton equations was also established. In~\cite{grkrwhitham,grkrcm} the first and second author applied RN differentials to obtain a new proof of the theorem of Diaz on complete complex subvarieties of the moduli space of curves $\calM_g$, and established a relationship with the loci of spectral curves of the elliptic Calogero-Moser system, while in~\cite{krarba} the second author used RN differentials to prove a conjecture of Arbarello on subvarieties of $\calM_g$.

The rough version of the results of this paper, with an approach not using the \RHP, and not yielding the full statement of theorem~\ref{thm:zeroes}, appeared in the third author's Stony Brook PhD dissertation defended in August 2014. We then developed the current approach to the problem using our solution of the \RHP. The current paper, and our proof, are completely independent of the concurrent and independent progress on the compactifications of strata of differentials with prescribed zeroes, in~\cite{gendron,chen,fapa,strata}.

\subsection*{Structure of the paper}
First, in section~\ref{sec:kirchhoff} we give the statement of the Kirchhoff problem on a general graph, and investigate the properties of its solutions, proving that they are a priori bounded, and constructing the blowup $\PS[\#E-1]$ such that convergence of resistances there implies convergence of solutions of the flow Kirchhoff problem. This section is elementary and does not deal with Riemann surfaces and differentials. The setup and the lemmas from it are essential to stating the main results of the paper.

In section~\ref{sec:notation} we recall the notation for the spaces of jet curves and RN differentials. In section~\ref{sec:resultslimits} we develop the notation for degenerating sequences and give the precise statement of the main theorem~\ref{thm:limits} on limits of RN differentials. In section~\ref{sec:plumbing} we recall the plumbing coordinates and notation. Section~\ref{sec:rhp} contains the technical core of our construction: we pose the \RHP, and use Cauchy kernels independent of plumbing parameters to construct an almost real-normalized (ARN) solution, with a bound on its norm. In section~\ref{sec:RNplumbing} we use the ARN solution of the \RHP to construct the RN differential explicitly in plumbing coordinates, as a sum of a recursively defined series, and effectively bound the terms of these series. In section~\ref{sec:prooflimits} we determine the behavior of this construction of the RN differential in a degenerating sequence, proving the main theorem~\ref{thm:limits} on limit RN differentials.

In section~\ref{sec:zeroes} we introduce the notion of two differentials balancing (canceling up to order $m$ under the map $z\mapsto sz^{-1}$) at a node to construct a better approximation to the RN differential recursively. Starting from a collection of balanced differentials on $C^v$, we show that the ARN solution of the corresponding \RHP is smaller than the differentials themselves, and thus in the limit the balanced differentials dominate --- this yields the main theorem~\ref{thm:zeroes} on limits of zeroes of RN differentials.

In the appendix we formalize this notion of a collection of differentials on $C^v$ that are close to a differential on the plumbed surface, by introducing the notion of an $m$-th order approximation. While this setup is not necessary for our main proofs, the method can be used to study behavior of degenerating differentials with arbitrary precision, as will be investigated elsewhere.

\subsection*{Acknowledgements}
The second author thanks Columbia University for hospitality in January-March 2016, when much work on this paper was done. We are grateful to Scott Wolpert for carefully reading the third author's PhD dissertation, and for many useful discussions and comments on the topics surrounding plumbing.

\section{Limits of solutions of the Kirchhoff problem}\label{sec:kirchhoff}
In this section we pose the Kirchhoff problem on an arbitrary graph, in the generality that we require, and investigate the limits of its solutions. This setup will be used to state our main results.

\begin{notat}
We denote by $\Gamma$ a graph, which is a collection of vertices $v\in V(\Gamma)$, and a collection of edges $|e|\in |E|(\Gamma)$, allowing loops and parallel edges. We further denote by $E(\Gamma)$ the set of oriented edges $e$, writing $-e$ for the same edge as $e$, but with the opposite orientation, and writing $|e|=|-e|$ for the corresponding unoriented edge. Our graphs also have legs, i.e.~half-edges attached to some vertices.
For an oriented edge $e$ we denote $v(e)$ the vertex that is its target, and for a vertex $v$ denote by $E_v$ the set of all edges pointing to it, i.e.~$E_v=\lbrace e\in E(\Gamma): v(e)=v\rbrace$. We denote by $\#E,\#|E|,\#V$ the cardinalities of the corresponding sets. We will use the underline for the elements of $\#E$-dimensional or $\#|E|$-dimensional vector spaces, for example $\uc$ will mean the collection of numbers $c_e$ for all $e\in E$ and $\urho$ will mean the collection of numbers $\rho_{|e|}$ (we'll specify in each case, whether the oriented or unoriented edges are taken).
\end{notat}
\begin{notat}[The Kirchhoff problem]\label{df:kirchhoff}
The {\em general Kirchhoff problem} for a graph $\Gamma$ is the following. As initial data, to every leg $\ell$, one assigns a real number $f_\ell\in\RR$, thought of as the in/outflow of current, and to every unoriented edge $|e|$ of $\Gamma$, one assigns a positive real number $\rho_{|e|}\in\RR_+$, thought of as resistance. In addition one chooses a class in the first cohomology group of the graph, $\calE\in H^1(\Gamma, \RR)$, thought of as the electromotive force.

The Kirchhoff problem is then to find for each oriented edge $e\in E(\Gamma)$ a real number $c_e$ (the electrical current) such that the set of all $c_e$ satisfies the following three conditions:
\begin{itemize}
\item[(0)] $c_e=-c_{-e}$ for any $e\in E(\Gamma)$;

\item[(1)] The total current flow at any vertex is zero: for any  $v\in V(\Gamma)$,
\begin{equation}\label{noflow}
 \sum_{e\in E_v}c_e=-\sum_{\lbrace {\rm  leg\ } \ell\, \mid\, \ell {\rm\ connects\ to\ } v\rbrace} f_\ell;
\end{equation}

\item[(2)] for any oriented cycle of edges $\gamma\subset\Gamma$ the total voltage drop is equal to the
electromotive force along the cycle:

\begin{equation}\label{loopvoltage}
\sum_{e\in \gamma} c_{e} \rho_{|e|} =\calE_{\gamma}:=\langle\calE,\gamma\rangle.
\end{equation}
\end{itemize}
\end{notat}
In modern terminology, the Kirchhoff problem is to find a one-form on the graph with prescribed periods over cycles. In physics, this is the problem of determining the flow of the electrical current. Physically it is classically known that the current flows, and in a unique way:
\begin{fact}\label{fact:unique}
For any connected graph $\Gamma$, if the sum of all $f_\ell$ is equal to zero, the general Kirchhoff problem has a unique solution.
\end{fact}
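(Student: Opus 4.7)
The plan is to recast the Kirchhoff problem as a square linear system, establish uniqueness by an integration-by-parts/energy argument, and conclude existence from a dimension count.

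First I would do the bookkeeping. Subject to the antisymmetry~(0), the unknown $\uc$ lives in a vector space of dimension $\#|E|$. Condition~(1) gives one scalar equation per vertex; summing these over all $v\in V(\Gamma)$, the left-hand side telescopes to zero because each $c_e$ appears at $v(e)$ with the sign opposite to its appearance at $v(-e)$, while the right-hand side sums to $-\sum_\ell f_\ell=0$ by hypothesis. For a connected graph this is the only linear relation among the vertex equations, so (1) contributes $\#V-1$ independent constraints. Condition~(2) depends only on the homology class $[\gamma]\in H_1(\Gamma,\RR)$ (both sides are linear in $\gamma$ and, by~(1), the left-hand side vanishes on boundaries), hence contributes $\dim H_1(\Gamma,\RR)=\#|E|-\#V+1$ independent constraints. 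The total $(\#V-1)+(\#|E|-\#V+1)=\#|E|$ matches the number of unknowns, so the underlying linear map is between vector spaces of equal dimension.

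For uniqueness, let $\{d_e\}$ be a solution of the homogeneous problem (the difference of two candidate solutions, with all $f_\ell=0$ and $\calE=0$). By~(1) the collection $d$ is a cycle, and by~(2) the $1$-cochain $e\mapsto \rho_{|e|}d_e$ has vanishing period on every cycle; since $\Gamma$ is connected, this cochain is exact, i.e.\ there exists $\phi\colon V(\Gamma)\to\RR$ with $\rho_{|e|}d_e=\phi(v(e))-\phi(v(-e))$ for every $e\in E(\Gamma)$. Using the antisymmetry $d_e=-d_{-e}$ to reindex the sum over oriented edges,
\begin{equation*}
\sum_{|e|\in|E|(\Gamma)}\rho_{|e|}\,d_e^{2}
\;=\;\tfrac12\sum_{e\in E(\Gamma)}d_e\bigl(\phi(v(e))-\phi(v(-e))\bigr)
\;=\;\sum_{v\in V(\Gamma)}\phi(v)\sum_{e\in E_v}d_e
\;=\;0.
\end{equation*}
Since every $\rho_{|e|}>0$, this forces $d_e=0$ for all $e$.

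Existence is now immediate by rank--nullity: the linear map carrying $\uc$ to the tuple of vertex and cycle equations is between vector spaces of equal dimension $\#|E|$ and has trivial kernel, hence is an isomorphism, and the prescribed right-hand side (consistent with the one linear relation among the vertex equations thanks to $\sum_\ell f_\ell=0$) has a unique preimage. The only step requiring genuine care — and the main potential obstacle — is the orientation bookkeeping: verifying that the vertex equations have exactly one linear relation for connected $\Gamma$, and keeping the signs straight in the integration by parts above. Both are routine, but the antisymmetry~(0) combined with the two ways of summing (over oriented vs.\ unoriented edges) makes them easy to mishandle if one is not careful.
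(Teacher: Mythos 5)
Your proof is correct. Note, though, that the paper offers no argument for this Fact at all: it is quoted as the classical statement that current flows uniquely, so there is no in-paper proof to compare against, and your write-up supplies exactly the standard justification (potential/energy argument for uniqueness, dimension count for existence). Two small points of phrasing are worth tightening but are not gaps. First, the remark that the left-hand side of (2) ``vanishes on boundaries'' is vacuous for a graph, since a $1$-complex has no $2$-cells and $H_1(\Gamma,\RR)$ coincides with the cycle space; what you actually use is that both sides of (2) are linear in the $1$-chain determined by $\gamma$, so it suffices to impose (2) on a basis of the cycle space (and note that a loop edge is itself a cycle, which is what forces $\rho_{|e|}d_e=0$ there and makes your exactness claim valid for loops). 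Second, in the existence step the map to ``the tuple of vertex and cycle equations'' has target of dimension $\#V+N=\#|E|+1$, not $\#|E|$; to make rank--nullity literal you should either drop one vertex equation (the omitted one is then automatic, by summing) or observe that the image lies in the codimension-one subspace where the vertex components sum to zero, which has dimension $\#|E|$, and that the right-hand side lies in that subspace precisely because $\sum_\ell f_\ell=0$ --- your parenthetical indicates you intend exactly this, so the argument stands as written once that is made explicit. The uniqueness computation, including the reindexing over oriented versus unoriented edges and the use of $\rho_{|e|}>0$, is correct.
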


We think of the Kirchhoff problem as a system of non-homogeneous linear equations on $\uc=\lbrace c_e\rbrace$, with the right-hand-side given by the flows $f_\ell$ and electromotive force $\calE$. As such, its solution is linear in the initial data, and is given as the sum of the solutions of the problem with only $f$'s or only $\calE$ present; we study these two special cases separately.
\begin{df}
The {\em flow Kirchhoff problem} is the special case of the general Kirchhoff problem when the electromotive force $\calE$ is zero.

The {\em (electromotive) force Kirchhoff problem} is the special case of the general Kirchhoff problem when all the in/outflow is zero, i.e.~when all $f_\ell=0$.
\end{df}

\begin{rem}\label{rem:homogeneity}
The solution of the flow Kirchhoff problem is unchanged if  $\urho=\lbrace\rho_{|e|}\rbrace$ is rescaled by some $\mu\in\RR_+$, while $\lbrace f_\ell\rbrace$ are unchanged. Thus it is natural to think of the initial data of the flow Kirchhoff problem as a point $\PP\urho\in S_{>0}^{\# |E|-1}$, where we denote by $S_{>0}^{\# |E|-1}:=\RR_+^{\#|E|}/\RR_+$ the positive octant of the real sphere.

The solution of the force Kirchhoff problem is homogeneous under rescaling resistances: if all resistances $\rho_{|e|}$ are rescaled by $\mu\in\RR_+$ while $\calE$ is unchanged, then $\uc$ is rescaled by $\mu^{-1}$. Thus if thinking of the initial data as $\PP\urho\in S_{>0}^{\#|E|-1}$, then the solution is only defined as a projective point, also.
\end{rem}

\medskip
One crucial feature of our setup is that since all resistances are positive reals, the currents solving the Kirchhoff problem can be a priori bounded. As hinted at by the homogeneity, it is natural to expect a bound for the flow Kirchhoff problem independent of $\urho$, and a bound for the force problem that is linear in $1/\rho$. We prove these two a priori bounds --- which we could not find in the literature --- by elementary arguments.
\begin{lm}\label{lm:aprioribound}
For a given graph $\Gamma$ and given inflows $\lbrace f_\ell\rbrace$, for any edge $e$ of $\Gamma$ the solution $c_e$ of the corresponding flow Kirchhoff problem satisfies
\begin{equation}\label{C-bound}
|c_e|\le \frac12 \sum_{\ell} |f_\ell|
\end{equation}
for any resistances $\urho\in \RR_+^{\#|E|}$.
\end{lm}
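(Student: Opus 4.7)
The plan is to exploit the fact that in the flow Kirchhoff problem the cycle condition \eqref{loopvoltage} with $\calE=0$ is exactly the integrability of the 1-cochain $\uc\cdot\urho$, which lets me introduce a potential function $\phi:V(\Gamma)\to\RR$, unique up to an additive constant, such that $c_e\rho_{|e|}=\phi(v(-e))-\phi(v(e))$ for every oriented edge $e$. Once this is in place, the sign of the current through any edge coincides with the direction of decrease of $\phi$, independently of the resistances. This is the only place where $\urho$ enters, and it is what makes the bound uniform in $\urho$.

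Given $e$, I first orient it so that $c_e\ge 0$ (the case $c_e=0$ is trivial), and write its endpoints as $u=v(-e),\,v=v(e)$, so $\phi(u)\ge\phi(v)$. Since $\Gamma$ has finitely many vertices, I pick a real number $t$ strictly between $\phi(v)$ and $\phi(u)$ and distinct from $\phi(w)$ for every other vertex $w$. Setting $A:=\{w\in V(\Gamma):\phi(w)>t\}$ and $B:=\{w\in V(\Gamma):\phi(w)<t\}$ then gives a partition $V(\Gamma)=A\sqcup B$ with $u\in A$, $v\in B$, and with the property that every edge of $\Gamma$ crossing this cut carries strictly positive current when oriented from $A$ to $B$ (since $\phi$ strictly drops across it).

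The main step is to sum the vertex conservation law \eqref{noflow} over all $v'\in A$. Every edge internal to $A$ contributes a pair of opposite values and cancels, so the left-hand side collapses to $-\sum_{e':A\to B}c_{e'}$, and one obtains
\begin{equation*}
\sum_{e':A\to B}c_{e'}=\sum_{\ell\text{ attached to some }v'\in A}f_\ell.
\end{equation*}
Because $e$ is one of the (positive) summands on the left and all other summands are nonnegative, one concludes
\begin{equation*}
c_e\;\le\;\sum_{e':A\to B}c_{e'}\;=\;\sum_{\ell\text{ at }A}f_\ell\;\le\;\sum_{\ell:\,f_\ell>0}f_\ell\;=\;\tfrac12\sum_\ell|f_\ell|,
\end{equation*}
where the final equality uses the solvability hypothesis $\sum_\ell f_\ell=0$ from Fact~\ref{fact:unique}, which forces the positive and negative parts of $\{f_\ell\}$ each to sum to $\tfrac12\sum_\ell|f_\ell|$.

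There is no serious obstacle: the only mildly subtle point is checking that a separating value $t$ with the correct properties can always be chosen, which follows immediately from the finiteness of $V(\Gamma)$. The independence of the bound from $\urho$ is built into the argument because the cut analysis only uses the sign pattern of $\uc$, which is encoded by $\phi$ and therefore by the comparison $\phi(v(-e))\gtrless\phi(v(e))$ rather than by the magnitudes of the resistances.
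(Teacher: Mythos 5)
Your proof is correct, and it takes a genuinely different route from the paper's. The paper proceeds by induction on the number of vertices: positivity of the resistances together with the cycle condition \eqref{loopvoltage} shows there is a vertex all of whose incoming currents are nonnegative (otherwise chasing negative-current edges would produce an oriented cycle with negative total voltage drop), the currents at that vertex are then bounded via \eqref{noflow} and $\sum_\ell f_\ell=0$, and that vertex is deleted with its edges converted into legs carrying inflows $c_e$, so the inductive hypothesis applies to the smaller graph. You instead pass to the voltage potential (which the paper introduces only in a remark after lemma~\ref{lm:aprioriforce}) and run a level-set cut argument: since the sign of each current agrees with the direction of potential decrease regardless of the magnitudes of $\urho$, a generic level $t$ separating the endpoints of $e$ produces a cut all of whose crossing currents, oriented from the high side $A$ to the low side $B$, are positive; summing \eqref{noflow} over the vertices of $A$ gives $c_e\le\sum_{e'\colon A\to B}c_{e'}$, which equals the total inflow through the legs attached to $A$ and hence is at most $\frac12\sum_\ell|f_\ell|$ once one uses $\sum_\ell f_\ell=0$. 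Your route avoids induction, confines the use of $\urho\in\RR_+^{\#|E|}$ to the single observation that current flows downhill, and in fact proves the slightly sharper statement that $|c_e|$ is bounded by the net inflow into either side of any potential-level cut separating the endpoints of $e$; the paper's induction, in exchange, never needs the potential and sets up the edge-to-leg surgery that is reused immediately in the proof of the force bound, lemma~\ref{lm:aprioriforce}. Two points worth making explicit in a final write-up: loops and the case $c_e=0$ are trivial (as you note), and on a disconnected graph the potential is defined only up to a constant on each connected component, which does not affect the cut argument.
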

\begin{proof}
We prove the statement by induction on the number $k$ of vertices of $\Gamma$. If $k=1$, then every edge $e$ is a loop, and thus by condition (2) of the Kirchhoff problem (i.e.~equation~\eqref{loopvoltage}) $c_e=0$, so that the inequality is trivially satisfied. Suppose now that the statement holds for any graph with $k$ vertices. For a graph with $k+1$ vertices we claim that there must exist a vertex $v$ such that $c_e\ge 0$ for any $e\in E_v$. Indeed, suppose for contradiction that such a vertex did not exist. Then starting from an arbitrary vertex we follow some edge originating from it such that the current is negative, get to another vertex, and repeat. Then eventually we must return to a vertex that we have already visited, and thus we will have constructed an oriented cycle of edges in $\Gamma$ such that $c_e<0$ for any edge in the cycle. However, since all $\rho_{|e|}$ are positive real numbers, the sum $\sum c_e\rho_{|e|}$ over this cycle would be negative, contradicting condition (2) of the flow Kirchhoff problem, as there is no force on the right-hand-side there.

Thus there exists a vertex $v\in V(\Gamma)$ such that $c_e\ge 0$ for any $e\in E_v$. Condition (1) of the Kirchhoff problem (i.e.~equation~\eqref{noflow}) at $v$ then gives
\begin{equation}\label{induc1}
\sum_{e\in E_{v}} |c_e|=\sum_{e\in E_{v}} c_e=-\sum_{\ell: p_\ell \in C^v}f_\ell
\end{equation}
Since the sum of all inflows equals zero we have
$$-\sum_{\ell: p_\ell \in C^v}f_\ell=-\frac 12\left(\sum_{\ell: p_\ell \in C^v}f_\ell-\sum_{\ell: p_\ell \notin C^v}f_\ell\right)\le \frac 12\sum_{\ell}|f_\ell|$$
Hence  inequality~\eqref{C-bound} holds for any $e\in E_v$.

The currents $\lbrace c_e:{e\notin E_v}\rbrace$ are a solution of the flow Kirchhoff problem on the graph $\Gamma'$ whose vertices are $V(\Gamma)\setminus v$, and with additional legs obtained by replacing each oriented edge $e\in E_v$ by a leg attached to $v(-e)$, with inflow $c_e$ in that new leg. By the inductive assumption for the graph $\Gamma'$ we have for any $e\in \Gamma'$ the inequality
$$
|c_e|\le \frac12\left(\sum_{\lbrace \ell :p_\ell \notin C^v\rbrace }|f_\ell|+\sum_{e\in E_v} c_e\right).
$$
holds. Combining this with~\eqref{induc1} implies~\eqref{C-bound} for all edges of the original graph $\Gamma$.
\end{proof}
The bound for solutions of the force Kirchhoff problem is as follows.
\begin{lm}\label{lm:aprioriforce}
For a given graph $\Gamma$ and given electromotive force $\calE$, for any edge $e$ of $\Gamma$ the solution $c_e$ of the force Kirchhoff problem for any  resistances $\urho\in \RR_+^{\#|E|}$ satisfies
\begin{equation}\label{C-bound1}
|c_e|\le \frac{N\cdot|\calE|}{\min_{|e|\in |E|}\rho_{|e|}},
\end{equation}
where $|\calE|$ denotes the maximum value of $\calE$ on simple loops in $\Gamma$, and $N$ is the rank of $H_1(\Gamma)$.
\end{lm}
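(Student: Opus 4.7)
The plan is to exploit the fact that when all inflows vanish, Kirchhoff's current law (condition (1) of the Kirchhoff problem) forces $\uc$ to be a $1$-cycle on $\Gamma$, i.e.\ an element of $Z_1(\Gamma,\RR)=H_1(\Gamma,\RR)$, which has rank $N$. Assuming $\Gamma$ connected (else argue per component), I would fix a spanning tree $T$, let $e_1,\dots,e_N$ be the non-tree edges, and let $\gamma_i$ be the fundamental simple cycle obtained by adjoining $e_i$ to $T$, oriented so that $e_i$ is traversed positively. Then $\uc=\sum_{i=1}^N c_{e_i}\chi_{\gamma_i}$, where $\chi_{\gamma_i}$ is the $\pm 1$-valued $1$-chain supported on $\gamma_i$: both sides are $1$-cycles that agree on every non-tree edge (since $e_j\in\gamma_i$ iff $i=j$), and any nonzero $1$-cycle supported in a tree does not exist.

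With this decomposition in hand I would compute the energy $E(\uc):=\sum_{|e|\in|E|}\rho_{|e|}c_e^2$ in two ways. Expanding one factor of $c_e$ via the cycle sum and applying the force Kirchhoff condition~\eqref{loopvoltage} to each $\gamma_i$ gives
\begin{equation*}
E(\uc)=\sum_{i=1}^N c_{e_i}\sum_{e\in\gamma_i}\rho_{|e|}c_e=\sum_{i=1}^N c_{e_i}\,\calE_{\gamma_i}\le|\calE|\sum_{i=1}^N|c_{e_i}|,
\end{equation*}
since each $\gamma_i$ is simple and hence $|\calE_{\gamma_i}|\le|\calE|$. On the other hand, restricting the energy sum to the $N$ non-tree edges alone yields $E(\uc)\ge(\min_{|e|}\rho_{|e|})\sum_{i=1}^N c_{e_i}^2$.

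Setting $S:=\bigl(\sum_i c_{e_i}^2\bigr)^{1/2}$ and combining the two estimates via Cauchy--Schwarz, $\sum_i|c_{e_i}|\le\sqrt{N}\,S$, produces $(\min\rho)\,S^2\le\sqrt{N}\,|\calE|\,S$, hence $S\le\sqrt{N}\,|\calE|/\min\rho$. For an arbitrary edge $e$ of $\Gamma$ the decomposition writes $c_e=\sum_{i:\,e\in\gamma_i}\pm c_{e_i}$, and a second application of Cauchy--Schwarz gives $|c_e|\le\sum_{i=1}^N|c_{e_i}|\le\sqrt{N}\,S\le N\,|\calE|/\min\rho$, which is the bound~\eqref{C-bound1}. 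The only delicate step is the identity $E(\uc)=\sum_i c_{e_i}\calE_{\gamma_i}$, which relies on choosing consistent orientations so that the contributions along each fundamental cycle recombine into the force condition; once that is set up, the remainder is routine Cauchy--Schwarz.
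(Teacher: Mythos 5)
Your argument is correct, but it is genuinely different from the one in the paper. The paper proves the bound by induction on $N=\operatorname{rank}H_1(\Gamma)$: using condition (1) with zero inflows it extracts a simple oriented loop on which all currents are nonnegative, so that $c_{e_0}\rho_{|e_0|}\le\calE_\gamma\le|\calE|$ for every edge $e_0$ of that loop; it then cuts $e_0$, replaces it by two legs with inflows $\pm c_{e_0}$, splits the resulting general Kirchhoff problem into a flow part (bounded by lemma~\ref{lm:aprioribound} in terms of $c_{e_0}$) and a force part (bounded by the inductive hypothesis with $N-1$), and adds the two bounds. You instead observe that with no inflows $\uc$ is a $1$-cycle, decompose it over the fundamental cycles of a spanning tree, and use the energy identity $\sum_{|e|}\rho_{|e|}c_e^2=\sum_i c_{e_i}\calE_{\gamma_i}$ together with two applications of Cauchy--Schwarz; the only points worth making explicit are the orientation conventions in the decomposition (which you flag) and the trivial case $S=0$. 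Your route is non-inductive and self-contained --- it does not invoke the flow bound of lemma~\ref{lm:aprioribound} at all --- and it is essentially a Dirichlet-energy argument that even yields the slightly stronger $\ell^2$-type intermediate estimate $\bigl(\sum_i c_{e_i}^2\bigr)^{1/2}\le\sqrt{N}\,|\calE|/\min\rho$ on the non-tree edges; the paper's inductive argument, by contrast, reuses the nonnegative-cycle extraction and the flow bound already established, keeping the two a priori estimates structurally parallel. Both arguments produce the same constant $N$ in~\eqref{C-bound1}.
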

\begin{proof}
We will prove the lemma by induction on $N$. If $N=0$ then there are no cycles and it is easy to see that in this case all currents $c_e$ are zero. In order to prove the induction step, first note that by condition (1) of the force Kirchhoff problem, in the absence of in and outflows, for any vertex $v\in V(\Gamma)$ there must exist some edge $e_+$ such that $c_{e_+}\ge 0$. As in the proof of the previous lemma, going along such edges we must eventually return back to a vertex already passed, and the first time we do so, we have constructed a simple oriented loop $\gamma\subset \Gamma$ such that $c_{e'}\ge 0$ for any $e'\in\gamma$. Thus for any $e_0\in\gamma$ we have the estimate
\begin{equation}\label{e1}
c_{e_0}\rho_{|e_0|}\le \sum_{e'\in\gamma} c_{e'}\rho_{|e'|}=\calE_\gamma\le |\calE|,
\end{equation}
which is stronger than the required bound~\eqref{C-bound}.

Consider the graph $\Gamma'$ obtained from $\Gamma$ by cutting the edge $e_0$ and attaching to the vertices $v(e_0)$ and $v(-e_0)$ new legs with inflows $c_{e_0}$ and $c_{-e_0}$, respectively. The solution of the force Kirchhoff problem on $\Gamma$ restricted to all edges of $\Gamma'$ coincides with the solution $\tilde c_e$ of the general Kirchhoff problem on $\Gamma'$ with the same force as before on all cycles that did not pass through $e_0$, and with these inflows in the two new legs. Since the general Kirchhoff problem is the linear combination of the flow and force Kirchhoff problems, we can write $\tilde c_e=c_e'+c_e''$, where $c_e'$ and $c_e''$ are the solutions of the corresponding flow and force problems. For $c_e'$ we can use the previous lemma, while for $c_e''$ we use the inductive assumption, obtaining respectively the bounds
\begin{equation}
  |c'_e|\le c_{e_0},\ \ |c_e''|\le \frac{(N-1)|\calE|}{\min_{|e|\in |E|}\rho_{|e|}}
\end{equation}
Combining these estimates with~\eqref{e1} implies the needed bound~\eqref{C-bound1}.
\end{proof}
\begin{rem}\label{rem:size}
We note that as resistances $\rho_{|e|}$ go to infinity, the bound for solutions of the force problem goes to zero, which implies that for the general Kirchhoff problem the limit of solutions is given by the solutions to the corresponding flow Kirchhoff problem. This explains why only the solution of the flow Kirchhoff problem appears in our statement of theorem~\ref{thm:limits} on limits of RN differentials, while the force Kirchhoff problem is used to construct the RN differential explicitly in plumbing coordinates, in section~\ref{sec:RNplumbing}, essentially as corrections to the solution of the flow Kirchhoff problem.
\end{rem}
\begin{rem}
If $\uc$ is the solution of the flow Kirchhoff problem, then every vertex $v$ can be assigned a voltage potential $V_v\in\RR$ such that  Ohm's law $V_{v(e)}=V_{v(-e)}+c_e\rho_{|e|}$ holds for any edge $e$. The voltage potential on a connected graph is unique up to a global additive constant, while its existence is equivalent to condition (2) of the Kirchhoff problem. The voltage potential then induces a full (non-strict) order on the vertices of the graph, which it is natural to call the chronological order (motivated by construction of operator quantization of bosonic string in~\cite{kr-nov}).

This order is very different then the one considered in~\cite{strata} and the order of vanishing stratification that we introduce in definition~\ref{df:jetconv} below. The chronological order is a weak full order on the set $C^{(0)}$ of non-null irreducible components --- i.e.~on what would be the set of top level components in the terminology of~\cite{strata}. The chronological ordering is only present in our RN setup, when all the currents are real.
\end{rem}
\medskip
We now investigate the limits of solutions of the Kirchhoff problem as resistances vary. The flow Kirchhoff problem is a system of inhomogeneous linear equations on the currents $\uc$ with coefficients $\urho$, invariant under scaling $\urho$ by $\RR_+$. Thus the solution of the flow Kirchhoff problem depends continuously on $\PP\urho$. Given a sequence of resistances $\PP\urho_k$ that converges in $S_{>0}^{\#|E|-1}$, it thus follows that the solutions of the corresponding flow Kirchhoff problems converge. Since $S_{>0}^{\#|E|-1}$ is not compact, we will also need to investigate when the solutions of the flow Kirchhoff problems converge if $\PP\urho_k$ do not converge in $S_{>0}^{\#|E|-1}$. The simplest compactification is $S_{\ge 0}^{\#|E|-1}$ --- the closed octant of the sphere where the coordinates are required to be non-negative. However, convergence of $\PP\urho_k$ in $S_{\ge 0}^{\#|E|-1}$ does not guarantee convergence of the corresponding solutions of the Kirchhoff problem: to see this we note that if for some oriented cycle of edges all resistances are zero, an arbitrary constant can be added to all the flows in a cycle. This indicates that convergence of resistances in a certain blowup of $S_{\ge 0}^{\#|E|-1}$ is required to guarantee convergence of solutions of the flow Kirchhoff problem. The necessary blowup is in fact the real oriented blowup of the union of real coordinate planes intersected with the non-negative sector of the real sphere. We refer to~\cite{gillam} for a detailed definition and a survey of properties of the real oriented blowup of complex manifolds. For our purposes we give a direct iterative definition, which will also allow us to write down explicitly the analytic conditions for a sequence to converge in the blowup.
\begin{df}\label{df:PS}
We denote $S_{\ge 0}^{N-1}:=(\RR_{\ge 0}^N\setminus\lbrace 0\rbrace)/\RR_+$ the non-negative sector of the real sphere. The  {\em \pbs}, denoted $\PS$, is the blowup $\pi:\PS\to S_{\ge 0}^{N-1}$ defined recursively in $N$ as follows. We let $\PS[0]$ be a point. Given the definition of $\PS[j]$ for all $0<j<N$, we define $\PS[N]$ to be the result of blowing up every coordinate subspace $\lbrace 0\rbrace^j\times S_{>0}^{N-j-1}$ to $\PS[j]\times S_{>0}^{N-j-1}$ (for all possible renumberings of coordinates).
\end{df}
Recursively, this means that $\PS[j]$ is the disjoint union over all subsets $P\subset\lbrace 1,\dots,N\rbrace$ of the products $S_{>0}^{\# P-1}\times \PS[N-\#P-1]$, where the sphere records those coordinates that are non-zero, and the second factor records the corresponding recursive blowup.

Explicitly, denote $Z_N$ the set of all partitions of the set $\lbrace 1,\dots,N\rbrace$ into numbered subsets: $\lbrace P\rbrace\in Z_N$ is a decomposition $\lbrace 1,\dots,N\rbrace=P_1\sqcup\dots\sqcup P_l$. Then, as a set, $\PS$ is the disjoint union over all $\lbrace P\rbrace\in Z_n$ of the products of positive sectors of the sphere:
\begin{equation}\label{eq:PSset}
 \PS=\sqcup_{\lbrace P\rbrace\in Z_N}\prod_{j=1}^l S_{>0}^{N_j-1}
\end{equation}
where $N_j:=\# P_j$. The topology on $\PS$ is such that a point
$$\PP\urho=(x_1^{(1)}:\dots:x_{N_1}^{(1)})\times\dots\times (x_1^{(l)}:\dots:x_{N_l}^{(l)})\in \prod_{j=1}^l S_{>0}^{N_j-1}\subset \PS
$$ is the limit as $k\to \infty$ of a sequence of points $(y_1[k]:\dots:y_N[k])\in S_{>0}^{N-1}$ if and only if the following conditions hold:
\begin{equation}\label{eq:PS}
\begin{aligned}
 \lim_{k\to\infty} &(y_a[k]x_b^{(j)} - y_b[k]x_a^{(j)})=0 \qquad\mbox{for any } a,b\in P_j\ \mbox{for any }j,\mbox{ and}\\
 \lim_{k\to\infty} &\frac{y_a[k]}{y_b[k]}=0\quad\qquad\qquad\qquad\mbox{for any } a\in P_j, b\in P_{j'}\ \mbox{for any }j>j'.
\end{aligned}
\end{equation}

The case $l=1,\ P_1=\lbrace 1,\dots,N\rbrace$ corresponds to the open dense subset $S_{>0}^{N-1}\subset\PS$. The contraction $\pi:\PS\to S_{\ge 0}^{N-1}$ is defined by sending $\PP\urho$ to a point where all $x_i^{(j)}$ for $j>1$ are replaced by zeroes, while all $x_i^{(1)}$ are unchanged. The map $\pi$ is thus an isomorphism on $S_{>0}^{N-1}$, and we think of $\PS$ as a recursive real oriented blowup of $S_{\ge 0}^{N-1}$ (see eg.~\cite[Sec.~X.9]{acgh2} for a discussion of real oriented blowups). It can be seen that in fact $\PS$ is a real manifold with corners, but all that matters for us is that $\PS$ is a compact topological space containing $S_{>0}^{N-1}$ as a dense open subset.

We will show that convergence of resistances in $\PS[\#|E|-1]$ implies convergence of solutions of the flow Kirchhoff problem, and that the limits of solutions are solutions of the multi-scale Kirchhoff problem, which we now define. For a given point $\PP\urho\in\PS[\#|E|-1]$, let $x:=(x_{i_1}:\dots :x_{i_{\#P_1}}) \in S_{>0}^{\#P_1}$ be the ``largest factor'' in~\eqref{eq:PSset}, corresponding to $P_1$, and let $\PP\urho'\in\PS[\#|E|-\#P_1]$ correspond to the product of all other factors, so that we think of $\PP\urho\in\PS[\#|E|-1]$ as $x\times\PP\urho'\in S_{>0}^{\#P_1}\times\PS[\#|E|-\#P_1]$.
\begin{df}\label{df:multiKirchhoff}
The {\em multi-scale flow Kirchhoff problem} on a graph $\Gamma$ with inflows $\lbrace f_\ell\rbrace$, and generalized resistance $\PP\urho\in\PS[\#|E|-1]$, is posed recursively as follows.

Let $\Gamma_1$ be the graph obtained from $\Gamma$ by contracting all edges $|e|$ for $|e|\notin P_1$. Let $\lbrace c_e\rbrace_{|e|\in P_1}$ be the solution of the flow Kirchhoff problem on $\Gamma_1$ with inflows $f_\ell$ and resistances $x$.

Let $\Gamma_2$  be the (possibly disconnected) graph whose edges are all edges $|e|\in |E|(\Gamma) \setminus P_1$, whose vertices are all the endpoints of such edges, and whose legs are the original legs that connect at these vertices, together with a new leg for each edge $e$ such that $|e|\in P_1$ and $v(e)\in V(\Gamma_2)$. Then on every connected component of  $\Gamma_2$ we recursively pose the multi-scale flow Kirchhoff problem with the inflows being $\lbrace f_\ell\rbrace$ for the original legs attached to $\Gamma_2$, and $c_e$ for each new leg, and with resistances $\PP\urho'$ (note that condition (1) for the flow Kirchhoff problem on $\Gamma_1$ ensures that the sum of the inflows for every connected component of $\Gamma_2$ is then equal to zero).

The {\em solution} to the multi-scale flow Kirchhoff problem on $\Gamma$ is then defined to be the union of $\lbrace c_e:|e|\in P_1\rbrace$ and of the recursively defined solution of the multi-scale flow Kirchhoff problem on each connected component of $\Gamma_2$.
\end{df}
\begin{rem}\label{rem:multiKirchhoffmeaning}
In terms of stable curves, if $\Gamma$ is the dual graph of a stable curve $C$, then $\Gamma_1$ is the graph of the smoothing of $C$ at all the nodes except those indexed by $P_1$, while $\Gamma_2$ is the graph of the partial normalization of all the nodes except those indexed by $P_1$.
\end{rem}

We now prove that if resistances converge in $\PS[\#|E|-1]$, then solutions of the flow Kirchhoff problem converge to the solution of the multi-scale Kirchhoff problem.
\begin{lm}\label{lm:multiKirchhoff}
For a fixed graph $\Gamma$, if a sequence of inflows $f_{\ell,k}$ converges to $f_\ell$, and for a sequence of non-zero resistances $\lbrace\urho_k\rbrace$, the projectivized resistances $\PP\urho_k\in S_{>0}^{\# |E|-1}$ converge to some $\PP\urho\in\PS[\#|E|-1]$, then the solutions $\uc_k$ of the flow Kirchhoff problems with resistances $\urho_k$ and inflows $f_{\ell,k}$ converge. Moreover, the limit of $\uc_k$ is the solution of the multi-scale Kirchhoff problem with the inflows $f_\ell$ and generalized resistance $\PP\urho$ .
\end{lm}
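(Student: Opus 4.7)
The plan is to induct on the depth $l$ of the partition $\lbrace P_1,\ldots,P_l\rbrace\in Z_{\#|E|}$ associated to the limit point $\PP\urho$ via the decomposition~\eqref{eq:PSset}. When $l=1$, we have $\PP\urho\in S_{>0}^{\#|E|-1}$ and the multi-scale flow Kirchhoff problem reduces to the ordinary flow Kirchhoff problem, so the lemma amounts to continuous dependence of the unique solution given by Fact~\ref{fact:unique} on $(\PP\urho,\lbrace f_\ell\rbrace)$; this follows from the linearity of the Kirchhoff system and invertibility of its associated matrix. Assume now the lemma holds whenever the limit point lies in a stratum of depth less than $l$.

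For the inductive step, Lemma~\ref{lm:aprioribound} gives the uniform bound $|c_{e,k}|\le \tfrac{1}{2}\sum_\ell|f_{\ell,k}|$, so after passing to a subsequence we may assume $\uc_k\to \uc$ for some vector $\uc$. Since the multi-scale solution is unique (by applying Fact~\ref{fact:unique} recursively on $\Gamma_1,\Gamma_2,\ldots$), once we identify any subsequential limit $\uc$ as the multi-scale solution we obtain convergence of the full sequence. Normalize the representatives $\urho_k$ so that $\max_{|e|\in P_1}\rho_{|e|,k}=1$; the definition of convergence in $\PS[\#|E|-1]$ recorded in~\eqref{eq:PS} then forces $\rho_{|e|,k}\to x_{|e|}>0$ for $|e|\in P_1$ while $\rho_{|e|,k}\to 0$ for $|e|\notin P_1$.

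To verify that $\lbrace c_e\rbrace_{|e|\in P_1}$ solves the flow Kirchhoff problem on the contracted graph $\Gamma_1$ with resistances $x$ and inflows $\lbrace f_\ell\rbrace$: condition~(0) is immediate; condition~\eqref{noflow} at a vertex $v$ of $\Gamma_1$ follows by summing~\eqref{noflow} over all vertices of $\Gamma$ that contract to $v$, since non-$P_1$ currents then cancel in oppositely oriented pairs along the interior of the contracted subgraph; and condition~\eqref{loopvoltage} is obtained by lifting an oriented cycle $\gamma\subset\Gamma_1$ to an oriented cycle $\tilde\gamma\subset\Gamma$ (interleaving $P_1$-edges with paths of non-$P_1$-edges) and passing to the limit, where the non-$P_1$ contribution vanishes because the currents stay bounded while the corresponding resistances tend to zero.

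Finally, the restricted currents $\lbrace c_{e,k}\rbrace_{|e|\notin P_1}$ satisfy the flow Kirchhoff problem on each connected component of $\Gamma_2$, with the $P_1$-currents $c_{e,k}$ serving as inflows on the new legs — conditions (0)--(2) restrict directly from $\Gamma$ to $\Gamma_2$. By the previous paragraph these new inflows converge, and by the recursive structure in Definition~\ref{df:PS} the projectivized restricted resistances converge in $\PS[\#|E|-\#P_1-1]$ to the point $\PP\urho'$, whose associated partition has depth $l-1$. Applying the inductive hypothesis componentwise on $\Gamma_2$ identifies the restricted currents with the solution of the multi-scale flow Kirchhoff problem on $\Gamma_2$ with data $(\lbrace f_\ell\rbrace\cup\lbrace c_e\rbrace_{|e|\in P_1},\PP\urho')$, which together with the first-level identification expresses $\uc$ as the multi-scale solution. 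The main technical point to get right is that the rescaling $\max_{|e|\in P_1}\rho_{|e|,k}=1$ is compatible with the recursive structure of $\PS$, so that the relative rates of the remaining resistances genuinely define the correct limit $\PP\urho'$ in the lower-depth blowup; this is exactly what is encoded by the two lines of~\eqref{eq:PS}.
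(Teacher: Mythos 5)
Your proof is correct, and its overall scaffolding --- induction on the number of levels of the partition, normalizing representatives so that the $P_1$-resistances converge to a positive representative of $x$ while the remaining resistances tend to zero, and recursing on $\Gamma_2$ with the top-level currents as inflows on the new legs --- is the same as the paper's. Where you genuinely diverge is in the key step identifying the limit of the top-level currents. The paper compares $c_{e,k}$ directly with the solution $\tilde c_{e,k}$ of the flow Kirchhoff problem on $\Gamma_1$ with resistances $x_k$: subtracting the two loop conditions, it interprets the difference $c_{e,k}-\tilde c_{e,k}$ as the solution of a general Kirchhoff problem on $\Gamma_1$ with small electromotive force and auxiliary inflows, and invokes \emph{both} a priori bounds (Lemmas~\ref{lm:aprioribound} and~\ref{lm:aprioriforce}) to get the quantitative estimate $|c_{e,k}-\tilde c_{e,k}|<Mt|x|^{-1}$, so that the two sequences have the same limit. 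You argue softly instead: Lemma~\ref{lm:aprioribound} gives compactness, you check that any subsequential limit satisfies conditions (0)--(2) on $\Gamma_1$ (the flow condition by summing~\eqref{noflow} over the contracted preimage of each vertex so that non-$P_1$ currents cancel in pairs, the loop condition by lifting cycles of $\Gamma_1$ through the connected contracted subgraphs and noting that bounded currents times vanishing resistances disappear in the limit), and then conclude by uniqueness of the flow and multi-scale solutions; the standard bounded-sequence-with-unique-subsequential-limit argument upgrades this to convergence of the full sequence. Your route avoids Lemma~\ref{lm:aprioriforce} entirely and is arguably more elementary, while the paper's route is quantitative and exhibits an explicit comparison with the truncated problem at finite $k$ (a rate-type statement not needed for the lemma itself). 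From the point where the top-level limit is identified onward, the two proofs treat $\Gamma_2$ identically via the inductive hypothesis.
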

\begin{proof}
We will prove the lemma by induction on the number of levels of the multi-scale problem (that is on the number $l$ of factors in~\eqref{eq:PSset}). If $\PP\urho_k$ converge to some
$\PP\urho\in S_{>0}^{\# |E|-1}$, the statement is obvious, since the flow Kirchhoff problem is simply a system of non-degenerate linear equations, and solutions depend continuously on the parameters $\urho_k$ and inflows $f_{\ell,k}$.

Now, suppose $\pi(\urho)=(x,0)$, where $x$ corresponds to the $P_1$ factor, and denote by $|x|$ the minimal absolute value of coordinates of $x$. Then by rescaling each $\urho_k$ by a suitable positive real number, we can assume that $\urho_k=(x_k,\urho'_k)$ converge to $(x,0)$, while $\PP\urho_k$ converge to $\PP\urho$. Then for $k$ sufficiently large we know that the absolute value of each coordinate of $x_k$ is  bounded below by $|x|/2$, while for any $t>0$ there exists a $K$ sufficiently large such that for any $k>K$ the absolute value of each coordinate of $\urho'_k$ is less than $t$. Given any simple oriented loop $\gamma\subset\Gamma$, let $\gamma_1\subset\Gamma_1$ be the loop obtained by contracting those edges that are not in $P_1$. Then equation (2) of the flow Kirchhoff problem on $\Gamma$ reads
\begin{equation}\label{eq:2g}
 \sum_{e\in\gamma_1} c_{e,k} x_{e,k}+\sum_{e'\in \gamma\setminus\gamma_1} c_{e',k}\rho_{e',k}=0.
\end{equation}
Let $\lbrace\tilde c_{e,k}: e\in E(\Gamma_1)\rbrace$ be the solution of the flow Kirchhoff problem on $\Gamma_1$ with inflows $f_{\ell,k}$ and resistances $x_k$. Then condition (2) of the Kirchhoff problem gives
\begin{equation}\label{2g1}
  \sum_{e\in\gamma_1} \tilde c_{e,k} x_{e,k}=0.
\end{equation}
From~\eqref{eq:2g} and~\eqref{2g1}) it follows that
\begin{equation}\label{eq:2g2}
\sum_{e\in\gamma_1} (c_{e,k}-\tilde c_{e,k})\, x_{e,k}=-\sum_{e'\in \gamma\setminus\gamma_1} c_{e',k}\rho_{e',k}.
\end{equation}
The set $(c_{e,k}-\tilde c_{e,k})$ for $e\in E(\Gamma_1)$ is the solution of the general Kirchhoff problem on $\Gamma_1$ with
the electromotive force defined by the right hand side of \eqref{eq:2g} and inflows at every vertex
$v\in \Gamma_1$ such that $v=v(e), |e|\notin P_1$ equal to $\sum_{e\notin P_1, v(e)=v} c_{e,k}$.
Let $d_{e,k}'$ and $d_{e,k}''$ be the solutions of the corresponding flow and force problems. Since $|c_{e,k}|\le 1/2\sum_\ell |f_{\ell,k}|$ we can use \eqref{C-bound} and \eqref{C-bound1} to conclude that there is a constant $M$ such
that for any $e\in E(\Gamma_1)$
\begin{equation}
  |c_{e,k}-\tilde c_{e,k}|< Mt|x|^{-1}
\end{equation}
Since $t>0$ could be chosen arbitrary, and the rest of the right-hand-side is a constant, this implies that as $k\to\infty$, the solutions $c_{e,k}$ and $\tilde c_{e,k}$ on the edges of $\Gamma_1$ have the same limit. By the inductive assumption, the solutions of the Kirchhoff problem on $\Gamma_2$ converge to the solutions of the multi-scale problem on $\Gamma_2$ with the additional inflows equal $\tilde c_{e\in \Gamma_1}$.
\end{proof}

\section{Notation for RN differentials and moduli of jet curves}\label{sec:notation}
We follow the (slightly adjusted, in anticipation of~\cite{grkrcusps}) notation and the real-normalized differentials setup of~\cite{grkrwhitham,grkrcm}, which we now review.
\begin{df}
The {\em singular part} of a meromorphic differential at a point $p$ on a Riemann surface $C$ is the equivalence class of meromorphic differentials $\omega$ in a neighborhood of $p$, with the equivalence $\omega\sim \omega'$ if and only if $\omega'-\omega$ is holomorphic at $p$.
\end{df}
\begin{df}\label{def:M}
For $m_1,\dots,m_n\in\ZZ_{\ge 0}$ we denote $\calM_{g,n}^{m_1,\dots,m_n}$  the moduli space of smooth genus $g$ complex curves $C$ with $n$ distinct labeled marked points $p_1,\dots,p_n$ together with a singular part $\sigma_\ell$ of a meromorphic differential with pole of order exactly $m_\ell+1$ at each point $p_\ell$, such that each residue $r_\ell$ is purely imaginary, and the sum of all residues is equal to zero.

We similarly denote $\calM_{g,n}^{\le m_1,\dots,\le m_n}$ the moduli space where each $\sigma_\ell$ is a singular part of order {\em up to} $m_\ell+1$, and at least one of the singular parts is non-zero, with the same condition on the residues.

We call points of $\calM_{g,n}^{m_1,\dots,m_n}$ or of $\calM_{g,n}^{\le m_1,\dots, \le m_n}$ smooth {\em jet curves}.
\end{df}
We will always denote jet curves $X$, with $C$ denoting the underlying smooth curve. To keep the notation manageable, we will always suppress the marked points in our notation for curves and families of curves.

The reason for the name of a jet curve is that the datum of a singular part is equivalent to the datum of a jet of a local coordinate, in which the meromorphic differential can be written in the standard form as $(z^{-m}+rz^{-1})dz$.

We think of $\calM_{g,n}^{m_1,\dots,m_n}\subset \calM_{g,n}^{\le m_1,\dots,\le m_n} $ as fibrations over $\calM_{g,n}$ with fibers $\prod_\ell(\CC^{m_\ell}\setminus\CC^{m_\ell-1})\times \RR^{n-1}$ and $\left(\CC^{\sum m_\ell}\times\RR^{n-1}\right)\setminus\lbrace 0,0\rbrace$, respectively.

\smallskip
As easily follows from the positive-definiteness of the imaginary part of the period matrix, for any $X\in\calM_{g,n}^{\le m_1,\dots,\le m_n}$ there exists a unique meromorphic differential $\Psi_X\in H^0(C,K_C+\sum (m_\ell+1) p_\ell)$ with prescribed singular parts $\sigma_\ell$ at $p_\ell$, with residues $r_\ell\in\mathbb R$ at $p_\ell$, and with all periods real.
\begin{df}
For any $X\in\calM_{g,n}^{\le m_1,\dots,\le m_n}$ we call $\Psi=\Psi_X$ the associated {\em real-normalized (RN for short) differential}.
\end{df}
\begin{rem}
The datum of a real-normalized differential is equivalent to the datum of the harmonic function $F(p):=\Im\int^p\Psi$ on the punctured Riemann surface $C\setminus\{p_1,\dots,p_n\}$, defined up to an additive constant. Indeed, given any such harmonic function $F$, the RN differential is given by $d(F^*+iF)$, where $F^*$ denotes the harmonic conjugate function to $F$.
\end{rem}
\begin{notat}
From now on, we will fix $g,n,m_1,\dots,m_n$, with all $m_\ell\geq 0$, and write simply $\calM$ for $\calM_{g,n}^{\le m_1,\dots,\le m_n}$.
\end{notat}
Since in the Deligne-Mumford compactification the marked points on stable nodal curves are not allowed to coincide with the nodes, the holomorphic fibration $\calM\to\calM_{g,n}$ extends to a holomorphic fibration over the Deligne-Mumford compactification $\overline\calM_{g,n}$, which we denote $\overline\calM$. We will call $X\in\overline\calM$ {\em stable jet curves}.

\section{Statement of results: limits of RN differentials}\label{sec:resultslimits}
Our first goal is to give the precise statement of the theorem on limits of RN differentials --- this will be theorem~\ref{thm:limits}, which is the precise version of theorem~\ref{thm:limitsrough}. As the RN differential does not depend holomorphically on the moduli, we work with sequences of smooth curves degenerating to a stable curve, rather than with algebraic families of smooth curves degenerating to a stable one.

\begin{notat}
For a stable curve $(C,p_1,\dots,p_n)\in\overline\calM_{g,n}$, its {\em dual graph} $\Gamma$ has vertices $v$ that correspond to {\em normalizations} $C^v$ of irreducible components of $C$,  edges $|e|$ that correspond to nodes $q_{|e|}$ of $C$, oriented edges $e$ that correspond to preimages $ q_e$ of the nodes (as points on the normalization $\tilde C$ of $C$), and legs $\ell$ that correspond to the marked points $p_\ell$, attached to the vertex $v$ such that $p_\ell\in C^{v}$. So $E_v$ is the set of all preimages of the nodes that are contained in $C^v$, and $q_e$ and $q_{-e}$ are the two preimages on $\tilde C$ of a node $q_{|e|}$ of $C$.
\end{notat}
\begin{notat}
From now on, we always work with a sequence $\lbrace X_k\rbrace\subset \calM$ of smooth jet curves such that $X_k$ converge as $k\to \infty$ to some stable jet curve $X\in\partial\overline\calM$. We denote $\lbrace C_k\rbrace\subset\calM_{g,n}$  the underlying sequence of smooth curves with distinct marked points (which, recall, we systematically suppress in notation), which then must converge to the stable curve $C\in\partial\overline\calM_{g,n}$ underlying the stable jet curve $X$.

The limit $\Psi:=\lim_{k\to \infty}\Psi_{X_k}$, if it exists, we call the {\em limit RN differential} in such a sequence.  By abuse of notation we will speak of the singularities of differentials at points $p_\ell\in C_k$ without labeling the dependence of $p_\ell$ on $k$. We will write $\Psi_k$ for $\Psi_{X_k}$.
\end{notat}
If the limit RN differential $\Psi$ exists in a given sequence, then $\Psi\in H^0(C,\omega_{C}(\sum (m_\ell+1) p_\ell))$ is a RN differential. This is to say, $\Psi$ has prescribed singularities at every point $p_\ell\in C$, and has at most simple poles at the nodes of $C$, with opposite residues. By abuse of notation, we denote by $\Psi^v$ the pullback to the normalization $C^v$ of the restriction of $\Psi$ to the corresponding irreducible component of $C$. Thus each $\Psi^v$ is determined uniquely by its residues at $q_e$ for all $e\in E_v$, and its singular parts at those $p_\ell$ that are contained in $C^v$.

In~\cite[Sec.~5]{grkrwhitham} we showed that limits of RN differentials whose only singularity is one double pole do not develop residues at the nodes of the stable curve. The proof applies verbatim to the case of RN differentials with a single pole of arbitrary order, and by $\RR$-linearity (of the dependence of the RN differential on its singular parts) it further extends to the general case of any differential without residues, i.e.~``of the second kind'' in classical terminology, giving the following result:
\begin{thm}[\cite{grkrwhitham}]\label{thm:2ndkindlimit}
If all the residues $r_\ell(X_k)$ are zero, then the limit RN differential $\Psi$ exists in any degenerating sequence $\lbrace X_k\rbrace\to X$, and on any $C^v$ the restriction $\Psi^v$ of the limit RN differential is the RN differential on $C^v$ with prescribed singular parts at those marked points $p_\ell$ that lie on $C^v$, and no other singularities, including at the nodes.
\end{thm}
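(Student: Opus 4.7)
The plan is to reduce to a base case via linearity and then identify the limit by an explicit approximation in plumbing coordinates.

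First, since $\Psi_X$ depends $\RR$-linearly on the prescribed singular parts $(\sigma_1,\dots,\sigma_n)$, it suffices to treat one non-zero singular part at a time. The vanishing of all $r_\ell$ forces every such $\sigma_\ell$ to be a pole of order $\ge 2$ at $p_\ell$, reducing us to the case of a single higher-order pole, established for a double pole in~\cite[Sec.~5]{grkrwhitham} and extending verbatim to higher orders.

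I would next write down the candidate limit explicitly. On each normalization $C^v$ let $\Psi^v$ be the unique RN differential with prescribed singular part $\sigma_\ell$ at those $p_\ell$ lying on $C^v$ and holomorphic at every preimage $q_e$, $e\in E_v$, of a node. This candidate exists and is unique: no residues are prescribed anywhere on $C^v$, so the residue theorem imposes no constraint, and existence plus uniqueness of an RN differential with given singular parts on a smooth curve follows from positive definiteness of $\Im\tau(C^v)$. The remaining task is to show $\Psi_k \to \Psi^v$ on each component.

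Working in plumbing coordinates $z_e w_e = s_{e,k}$ around each node, I would cut off the $\Psi^v$ using bump functions on each side of every neck and glue to form a meromorphic differential $\tilde\Psi_k$ on $C_k$ with the correct singular parts at the $p_\ell$. Since each $\Psi^v$ is holomorphic at $q_{\pm e}$ (rather than merely simple-polar), the mismatch across the neck --- the data of the associated jump problem --- is of size $O(|s_{e,k}|)$. Solving the \RHP with this data produces a correction $\eta_k$ with $\|\eta_k\|\to 0$. The true RN differential $\Psi_k$ then differs from $\tilde\Psi_k-\eta_k$ by a holomorphic differential $h_k$ on $C_k$, whose periods are chosen to cancel the (already small) non-real periods of $\tilde\Psi_k - \eta_k$. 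The a priori bound of Lemma~\ref{lm:aprioribound} applied to the flow Kirchhoff problem on the dual graph with inflows $ir_\ell = 0$ confirms that the only admissible collection of residues at the nodes in the limit is identically zero, so in particular no residue at a node can absorb the correction, and a standard estimate on the period matrix of $C_k$ forces $\|h_k\|\to 0$.

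The hard part is controlling the period along the vanishing cycle around each neck: a priori one might fear a $\log|s_{e,k}|$ contribution from $\tilde\Psi_k$, but because each $\Psi^v$ is genuinely holomorphic at $q_e$, the residue of $\tilde\Psi_k$ at the neck vanishes, killing any logarithmic term and leaving only an $O(|s_{e,k}|)$ contribution, which propagates cleanly through the Kirchhoff estimate. Uniqueness of each $\Psi^v$ then upgrades the subsequential convergence produced by compactness to convergence of the full sequence.
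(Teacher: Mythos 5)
Your reduction paragraph (use $\RR$-linearity in the singular parts and quote \cite{grkrwhitham} for a single pole of arbitrary order) is in fact the entirety of the paper's own proof of theorem~\ref{thm:2ndkindlimit}, so that part is fine. The problem is with your attempted re-proof of the base case, which contains a genuine gap at its central step. You claim that the glued approximation $\tilde\Psi_k-\eta_k$ has ``already small'' non-real periods, so that the holomorphic correction $h_k$ is forced to be small by ``a standard estimate on the period matrix.'' This is false for the cycles that \emph{cross} the necks: for such a cycle $\gamma$ the imaginary part of the period of the naive glued differential does not tend to zero but to a finite, generally nonzero constant $\Pi_\gamma$ (lemma~\ref{periodsofpsi} with all $c_e=0$); this is exactly why the limit RN differential of the second kind is \emph{not} real-normalized on the stable curve, as recorded in remark~\ref{rem:LRNnotRN}. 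You correctly observe that the vanishing of the residue kills the $c_e\ln|s_e|$ divergence, but the surviving constant term is what $h_k$ must cancel, and it is of order $1$, not $O(|s_{e,k}|)$. Moreover the period matrix of $C_k$ degenerates (its entries along vanishing cycles blow up like $\ln|s_{e,k}|$), so no ``standard'' bounded-inverse estimate applies. The correct mechanism — which your write-up never identifies — is precisely this logarithmic blow-up: a holomorphic differential on $C_k$ can cancel a finite imaginary period over a crossing cycle while having coefficients of size $O(1/|\ln|s_{e,k}||)$, hence tending to zero away from the nodes. In the paper's language this is the a priori bound for the \emph{force} Kirchhoff problem (lemma~\ref{lm:aprioriforce}) together with the recursive correction series of section~\ref{sec:RNplumbing}, and the resulting error estimate is $O((-\ln|\us_k|)^{-1/2})$ as in~\eqref{psibound1}, not $O(|s_{e,k}|)$. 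Your invocation of lemma~\ref{lm:aprioribound} with zero inflows only says the flow solution vanishes; it does nothing to control $h_k$.

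Two smaller points. Gluing the $\Psi^v$ ``using bump functions'' does not produce a meromorphic differential on $C_k$ — it produces a smooth form with a $\bar\partial$-error; to stay within the jump-problem framework you should instead restrict each $\Psi^v$ to $\widehat C^v_{\us_k}$ and correct the mismatch across the seams by the ARN solution, as in section~\ref{sec:rhp}. And the jump data for holomorphic (order $\ge 0$) $\Psi^v$ is of size $O(\sqrt{|s_{e,k}|})$ in the normalization of~\eqref{normLinf}, giving $\|\eta_k\|=O(\sqrt{|\us_k|})$ by proposition~\ref{ARNPJP}; the precise power is immaterial, but the dominant error in the whole argument is the logarithmic one above, not this one.
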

\begin{rem}
This statement is a priori surprising, as for example it follows that the limit RN differential of the second kind is identically zero on any $C^v$ that contains no marked points. This is clearly false for general $r_\ell$, as one sees by considering the case of $n=2$, with two simple poles that are on different components of the stable curve: then by the residue theorem there must appear a simple pole at some node between these components.
\end{rem}
For limits of RN differentials ``of the third kind'' --- that is, with arbitrary residues $r_\ell$ --- one can easily see that the residues of the limit RN differential may depend on the degenerating sequence, and our main theorem on limit RN differentials is a necessary condition for existence of a limit RN differential, and the determination of its residues. Such an explicit construction is not available in the literature for the closures of the strata studied in~\cite{gendron,chen,fapa,strata}.

\medskip

We work in plumbing coordinates near the boundary of the moduli space, which are recalled and discussed in detail in section~\ref{sec:plumbing}. To state the results, recall that the plumbing parameter $s_{|e|}$ corresponds to locally opening up the node $q_{|e|}\in C$ given in local coordinates by $xy=0$ to $xy=s_{|e|}$. The plumbing parameters for every node, together with the coordinates on the moduli space where $\tilde C$ lives give local coordinates near the boundary point $C$ of $\overline\calM_{g,n}$. Since $\overline\calM$ is a bundle over $\overline\calM_{g,n}$, local coordinates near $X\in\partial\overline\calM$ are given by the local coordinates on $\overline\calM_{g,n}$ near $C$, together with local coordinates for the fiber of $\overline\calM\to\overline\calM_{g,n}$.

\begin{df}
The {\em log plumbing coordinates} of a smooth point $C'$ in a neighborhood of $C\in\partial\overline\calM_{g,n}$ is the point $\urho(C'):=\lbrace -\ln |s_{|e|}(t)|\rbrace\in \RR_+^{\# |E|}$. The {\em projectivized log plumbing coordinates} of $C'$ is the point $\PP\urho(C')\in\RR_+^{\# |E|}/\RR_+=S_{>0}^{\#|E|-1}$.
\end{df}
Recall that in Lemma~\ref{lm:multiKirchhoff} we proved that convergence of projectivized resistances in the blowup $\PS[\#|E|-1]$ of $S_{\ge 0}^{\#|E|-1}$ implies convergence of solutions of the flow Kirchhoff problem; we thus make the following.
\begin{df}\label{df:admissible}
A sequence $\lbrace C_k\rbrace \subset\calM_{g,n}$ converging to $C\in\partial\overline\calM_{g,n}$ is called {\em admissible} if there exists a limit $\PP\urho:=\lim_{k\to \infty}\PP\urho(C_k)\in\PS[\#|E|-1]$ of the projectivized log plumbing coordinates $\PP\urho(C_k)$ of $C_k$ as $k\to \infty$.
The point $\PP\urho$ is then called the {\em rates of degeneration} of the sequence $\lbrace C_k\rbrace$.
\end{df}
Our main result on limit RN differentials is that their residues are given by limits of solutions of the flow Kirchhoff problem, which by lemma~\ref{lm:multiKirchhoff} is the solution of the multi-scale Kirchhoff problem.
\begin{thm}\label{thm:limits}
Let $\lbrace X_k\rbrace \subset \calM$ be a sequence of smooth jet curves converging to a stable jet curve $X$. Then the limit RN differential $\Psi=\lim_{k\to\infty}\Psi_{X_k}$ exists if and only if the solutions $c_{e,k}$ of the flow Kirchhoff problems with inflows $ir_{\ell,k}$ and resistances $\urho_k$ converge. If the limit RN differential exists, then on any $C^v$ the limit $\Psi^v$ is the RN differential with prescribed singularities at the marked points $p_\ell$ contained in $C^v$, and with simple poles of residue $i$ times the limit of the solutions of the flow Kirchhoff problem.
\end{thm}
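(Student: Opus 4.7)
By $\RR$-linearity of the RN construction in the prescribed singular parts, split $\Psi_k=\Psi_k^{(2)}+\Psi_k^{(3)}$, where $\Psi_k^{(2)}$ has the full prescribed singular parts at the $p_\ell$ with all residues set to zero (``of the second kind''), and $\Psi_k^{(3)}$ has only simple poles at the $p_\ell$ with residues $r_{\ell,k}$. By Theorem~\ref{thm:2ndkindlimit}, $\Psi_k^{(2)}$ already converges on each $C^v$ to the RN differential with the prescribed non-residue singular parts and no further poles, so the entire theorem reduces to analyzing $\Psi_k^{(3)}$. On each $C_k$ I fix a smoothly varying family of vanishing cycles $A_{|e|,k}$ and define real numbers $\tilde c_{e,k}$ by $\oint_{A_{|e|,k}}\Psi_k^{(3)}=-2\pi\tilde c_{e,k}$ (the period is real by real-normalization; I orient with $\tilde c_{e,k}=-\tilde c_{-e,k}$). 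These are the quantities that, in the pinching limit, will become $-i$ times the residues of $\Psi^v$ at the preimages $q_e$ of the nodes.

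\emph{Identification with Kirchhoff solutions.} Applying the residue theorem to $\Psi_k^{(3)}$ on the slight shrinkage of $C^v$ bounded by $\{A_{|e|,k}:e\in E_v\}$ and small loops around $\{p_\ell\in C^v\}$ gives
\begin{equation*}
\sum_{e\in E_v}\tilde c_{e,k}=-\sum_{\ell:\,p_\ell\in C^v}ir_{\ell,k},
\end{equation*}
which is exactly condition~(1) of the flow Kirchhoff problem with inflows $ir_{\ell,k}$. For each cycle $\gamma$ in the dual graph $\Gamma$, pick a B-cycle $B_{\gamma,k}\subset C_k$ crossing each $A_{|e|,k}$ ($e\in\gamma$) transversely once and closing through fixed paths on the normalizations $C^v$. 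The standard pinching-node expansion (see \cite{faybook,yamada}) yields
\begin{equation*}
\oint_{B_{\gamma,k}}\Psi_k^{(3)}=2i\sum_{e\in\gamma}\tilde c_{e,k}\,\rho_{|e|,k}+O(1),
\end{equation*}
with the $O(1)$ remainder real and uniformly bounded in $k$---the uniformity being precisely what the plumbing-independent Cauchy kernel of Section~\ref{sec:rhp} delivers. Reality of this period forces $\sum_{e\in\gamma}\tilde c_{e,k}\rho_{|e|,k}=O(1)$. Setting $d_{e,k}:=\tilde c_{e,k}-c_{e,k}$, the differences solve a general Kirchhoff problem with zero inflows and electromotive force $O(1)$, so Lemma~\ref{lm:aprioriforce} yields $|d_{e,k}|=O(1/\min_{|e|}\rho_{|e|,k})\to 0$. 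Hence convergence of $\tilde c_{e,k}$ is equivalent to convergence of $c_{e,k}$; and if $\Psi$ exists, its nodal residues on $C^v$ must be $i\lim_k c_{e,k}$.

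\emph{Sufficiency via the \RHP.} Conversely, assume $c_{e,k}\to c_e$. On each $C^v$ form the RN differential $\Phi_k^v$ with the prescribed singular parts at $p_\ell\in C^v$ and simple poles of residue $ic_{e,k}$ at each $q_e$, $e\in E_v$ (these exist by Kirchhoff~(1) and the residue theorem on $C^v$). Plumb these at each node with parameter $s_{|e|,k}$, and correct the mismatches on the plumbing annuli by the ARN jump-problem solution of Section~\ref{sec:rhp}, using the plumbing-independent Cauchy kernel. The output is an almost real-normalized differential whose periods fail to be real only by amounts of order $O(1/\min_{|e|}\rho_{|e|,k})$. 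A final $\RR$-linear adjustment by holomorphic differentials with coefficients governed by the corresponding force Kirchhoff problem---bounded (by Lemma~\ref{lm:aprioriforce}) by the same order---converts this into the genuine $\Psi_k$; it follows that $\Psi_k$ converges on each $C^v$ to the RN differential with residues $ic_e$ at the nodes. \emph{The main obstacle} is the uniform control of the $O(1)$ remainder in the B-period asymptotic: without the plumbing-independent Cauchy kernel of Section~\ref{sec:rhp} one could not extract condition~(2) of the Kirchhoff problem from reality of B-periods, and correspondingly one could not close the fixed-point argument in the sufficiency direction.
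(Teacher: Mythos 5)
Your overall strategy is the paper's own: the sufficiency direction is exactly the construction of sections~\ref{sec:rhp}--\ref{sec:RNplumbing} (plumb the collection $\Phi(\uc^{(0)}_k)$ of RN differentials whose nodal residues are $i$ times the flow Kirchhoff solution, remove the mismatches by the ARN solution of the jump problem built from the $\us$-independent Cauchy kernels, then repair the non-real periods by force-Kirchhoff corrections), and your necessity argument repackages the same estimates in terms of the true seam periods $\tilde c_{e,k}$ instead of the explicit series $\uc_k=\uc^{(0)}_k+\sum_{l\ge 1}\uc^{(l)}_k$ of proposition~\ref{prop:PsiR}. The preliminary split into second and third kind via theorem~\ref{thm:2ndkindlimit} is harmless but unnecessary, since the construction handles arbitrary singular parts at once. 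However, two steps are asserted where the real work lies, and as written they are gaps.

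First, the $B$-period expansion with a remainder that is ``real and uniformly bounded in $k$'' cannot be quoted from \cite{faybook},\cite{yamada}: those results concern normalized differentials in holomorphic one-parameter pinching families, not the RN differential along an arbitrary multi-parameter sequence. In this paper the expansion is lemma~\ref{periodsofpsi}, applied to $\Psi_k=\Psi(\lbrace\tilde c_{e,k}\rbrace)$ via lemma~\ref{lm:psiphi} and proposition~\ref{ARNPJP}, and its constants $\Pi_\gamma(\uc)$ and $M_5(\uc)$ depend on the very currents you are trying to control; so the $O(1)$ claim requires an a priori bound on $\lbrace\tilde c_{e,k}\rbrace$ first. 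This can be rescued by a bootstrap: lemma~\ref{periodsofpsi} plus lemmas~\ref{lm:aprioribound} and~\ref{lm:aprioriforce} give an inequality of the shape $|\tilde c_{e,k}|\le |\uc^{(0)}_k|+NM(1+\max_{e'}|\tilde c_{e',k}|)/\min_{|e|}\rho_{|e|,k}$, which can be absorbed for $k$ large --- but this has to be said; the paper avoids the issue entirely by never treating $\tilde c_{e,k}$ as an unknown, instead building $\uc_k$ from the bounded flow solution plus geometrically decaying corrections. Second, in the sufficiency direction the periods of the plumbed-and-jump-corrected differential $\Psi(\uc^{(0)}_k)$ fail to be real by amounts that are $O(1)$, not $O(1/\min\rho)$: by lemma~\ref{periodsofpsi} the logarithmic terms cancel thanks to condition (2) of the flow Kirchhoff problem, but the bounded constants $\Pi_\gamma(\uc^{(0)}_k)$ remain. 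What is $O(1/\min_{|e|}\rho_{|e|,k})$ (by lemma~\ref{lm:aprioriforce}) is the correcting \emph{currents}, and a single ``final adjustment'' does not produce $\Psi_k$ exactly --- one needs the convergent recursive series of force-Kirchhoff corrections of proposition~\ref{prop:PsiR}; your closing remark about a fixed-point argument is the right instinct, but it must be carried out, since it is precisely what yields $|\uc_k-\uc^{(0)}_k|\to 0$ and hence $\Psi_k\to\Phi(\lim\uc^{(0)}_k)$ together with the $O(\sqrt{|\us_k|})$ bound on the jump correction. (The factor $2i$ in your expansion is also off, but that is cosmetic.) With these two points filled in by lemma~\ref{periodsofpsi} and proposition~\ref{prop:PsiR}, your argument closes and coincides in substance with the paper's proof.
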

This theorem will be proven in section 6.

Since we have studied the limits of solutions of the Kirchhoff problem in section~\ref{sec:kirchhoff}, lemma~\ref{lm:multiKirchhoff} implies the following
\begin{cor}
If the sequence $\lbrace X_k\rbrace$ is admissible with rates of degeneration $\PP\urho\in\PS[\#|E|-1]$, then the limit RN differential $\Psi$ exists, and its residues are given by the solution of the multi-scale Kirchhoff problem with generalized resistances $\PP\urho$, for each $e\in E_v$.
\end{cor}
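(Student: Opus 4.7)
The plan is to deduce this as a direct consequence of Theorem~\ref{thm:limits} combined with Lemma~\ref{lm:multiKirchhoff}, since admissibility is designed precisely to feed into the hypothesis of the latter. First I would verify that the inflows of the flow Kirchhoff problem converge: because $X_k\to X$ in $\overline\calM$, the prescribed singular parts of $\Psi_{X_k}$ converge to those of $X$, and in particular the residues $r_{\ell,k}$ converge to $r_\ell$. Since each $r_\ell$ is purely imaginary, each $ir_{\ell,k}$ is real and $ir_{\ell,k}\to ir_\ell$, so the sequence of inflows satisfies the hypothesis of Lemma~\ref{lm:multiKirchhoff}.

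Next I would verify the hypothesis on resistances. By the definition of admissibility, the projectivized log plumbing coordinates $\PP\urho(C_k)$ converge to the rates of degeneration $\PP\urho\in\PS[\#|E|-1]$. One also needs that $\urho_k$ is non-zero for large $k$, which is automatic: the smooth curves $C_k$ degenerate to the nodal $C$, so for every node we have $|s_{|e|}(C_k)|\to 0$, hence $\rho_{|e|}(C_k)=-\ln|s_{|e|}(C_k)|\to+\infty$, and in particular $\urho_k\neq 0$ eventually.

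With both hypotheses checked, Lemma~\ref{lm:multiKirchhoff} applies and yields that the solutions $c_{e,k}$ of the flow Kirchhoff problems with resistances $\urho_k$ and inflows $ir_{\ell,k}$ converge; moreover their limit $\uc$ is the solution of the multi-scale flow Kirchhoff problem on the dual graph $\Gamma$ with inflows $ir_\ell$ and generalized resistance $\PP\urho$. Since $\uc_k$ converges, Theorem~\ref{thm:limits} (whose proof in Section~\ref{sec:prooflimits} does not rely on this corollary) implies that the limit RN differential $\Psi=\lim_{k\to\infty}\Psi_{X_k}$ exists and that on each normalized component $C^v$ the restriction $\Psi^v$ is the unique RN differential with the prescribed singular parts at the marked points $p_\ell\in C^v$ and with a simple pole at each $q_e$ for $e\in E_v$ of residue $i c_e$, the latter being the value read off from the multi-scale Kirchhoff solution. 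There is no genuine obstacle: the corollary is a bookkeeping combination of the two preceding results, and writing it out is essentially a matter of chasing the definitions of admissibility and of the multi-scale problem.
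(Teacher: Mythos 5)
Your proposal is correct and follows essentially the same route as the paper, which states this corollary as an immediate consequence of Theorem~\ref{thm:limits} combined with Lemma~\ref{lm:multiKirchhoff} (indeed the same combination appears at the end of the paper's proof of Theorem~\ref{thm:limits}). Your explicit verification of the hypotheses (convergence of the inflows $ir_{\ell,k}$ and of $\PP\urho_k$ in $\PS[\#|E|-1]$) is just the bookkeeping the paper leaves implicit.
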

\begin{rem}
For the case of limit RN differentials of the second kind, all $f_\ell$ are zero, and thus for any $\PP\urho_k$ the set of all currents $c_e=0$ is the unique solution of the flow Kirchhoff problem, so that the solutions converge for any sequence, and in the limit all currents are still equal to zero. In particular, in this case the limit is the same in all admissible sequences; indeed, while convergence of resistances in $\PS[\#|E|-1]$ implies convergence of solutions of the Kirchhoff problem, many such limits may be the same. We do not claim that $\PS[\#|E|-1]$ is the minimal blowup of $S_{\ge 0}^{\#|E|-1}$ onto which the solutions of the flow Kirchhoff problem extend continuously.
\end{rem}
\begin{rem}
The special case of this theorem when $C$ is geometric genus zero (i.e.~each $C^v$ is a rational curve), and the rates of degeneration lie in $S_{>0}^{\#|E|-1}$ (i.e.~no blowup to $\PS[\#|E|-1]$ is necessary), was studied by Lang~\cite{lang}, who obtained for this case a version of this theorem, from a completely different viewpoint and with very different methods.
\end{rem}
\begin{rem}
For degenerating algebraic 1-parameter families such as used in~\cite{strata}, each plumbing coordinate $s_e$ has the form $t^{n_e}$ for some integer $n_e>0$, and thus any subsequence of such a family is admissible, with rates of degeneration $\PP\underline{n}\in S_{>0}^{\#|E|-1}$. In particular, for such an algebraic family, there is no need to blow up the sphere. 
\end{rem}
\begin{rem}\label{rem:meanloopvoltage}
The meaning of condition (1) of the Kirchhoff problem in terms of differentials is clear: it serves to ensure that the residue theorem is satisfied for each $\Psi^v$. The meaning of condition (2) is less transparent. In fact if a collection of RN differentials $\Psi$ on the components $C^v$ were to have arbitrary residues at the nodes, the imaginary parts of its periods over cycles passing through the nodes will diverge logarithmically, as computed in lemma~\ref{periodsofpsi}. Condition  (2) is precisely to guarantee that the logarithmic divergences cancel, so that the imaginary parts of periods of the limit RN differential on the singular stable curve $C$  are {\it finite}.
\end{rem}
Since the space $\PS[\#|E|-1]$ is compact, it follows that all possible limit RN differentials on $C$ are obtained this way.
\begin{lm}\label{lm:subseq}
Any sequence of smooth jet curves $\lbrace X_k\rbrace$ converging to a stable jet curve $X\in\partial\overline\calM$ contains an admissible subsequence.
\end{lm}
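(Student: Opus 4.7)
The plan is to deduce the lemma directly from the compactness of the blowup $\PS[\#|E|-1]$ constructed in definition~\ref{df:PS}, since this compactness is precisely the property for which the recursive blowup was engineered. Concretely, the sequence of projectivized log plumbing coordinates $\PP\urho(C_k)$ lies in $S_{>0}^{\#|E|-1}\subset\PS[\#|E|-1]$, and $\PS[\#|E|-1]$ is a compact topological space (in fact a compact real manifold with corners, hence metrizable and sequentially compact). Therefore some subsequence $\PP\urho(C_{k_j})$ converges to a point $\PP\urho\in\PS[\#|E|-1]$, and this subsequence $\lbrace X_{k_j}\rbrace$ is admissible with rates of degeneration $\PP\urho$ by definition~\ref{df:admissible}.

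If one wishes to avoid invoking metrizability, I would construct the subsequence explicitly by a recursive extraction that mirrors the inductive definition of $\PS$ and the convergence criterion~\eqref{eq:PS}. Choose representatives $\urho_k\in\RR_+^{\#|E|}$ of $\PP\urho(C_k)$, normalized so that $\max_{|e|}\rho_{|e|,k}=1$. Since there are only finitely many edges, a pigeonhole extraction produces a subsequence (still indexed by $k$) for which, for every $|e|$, the limit $x^{(1)}_{|e|}:=\lim_k\rho_{|e|,k}\in[0,1]$ exists; let
\[
 P_1:=\lbrace|e|\in|E|(\Gamma):x^{(1)}_{|e|}>0\rbrace,
\]
which is nonempty because the maximum is always~$1$. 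The coordinates $(x^{(1)}_{|e|})_{|e|\in P_1}$ then determine the first factor of the limit point in $S_{>0}^{\#P_1-1}$. For the remaining edges $|e|\notin P_1$, renormalize by dividing by $\max_{|e|\notin P_1}\rho_{|e|,k}$, and apply the same extraction argument to this strictly smaller index set. Since $\#|E|$ is finite and each stratum $P_j$ produced this way is nonempty, the recursion terminates after at most $\#|E|$ steps. The output is a subsequence for which both conditions of~\eqref{eq:PS} hold by construction --- ratios within each $P_j$ converge to the prescribed positive limits, and ratios between edges of $P_j$ and $P_{j'}$ with $j>j'$ tend to zero --- so $\PP\urho(C_k)$ converges in $\PS[\#|E|-1]$.

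No substantive obstacle is anticipated: in either formulation the argument reduces to a finite diagonal/pigeonhole extraction, and the lemma should be viewed as the a~posteriori justification for having introduced the recursive real oriented blowup $\PS[\#|E|-1]$, namely that it is the compactification in which precisely this subsequential convergence is guaranteed.
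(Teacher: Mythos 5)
Your proposal is correct and follows the paper's own argument: the paper proves the lemma in one line by citing compactness of $\PS[\#|E|-1]$ and extracting a convergent subsequence of $\lbrace\PP\urho(C_k)\rbrace$, exactly as in your first paragraph. Your additional explicit pigeonhole extraction is a valid (and slightly more careful) supplement, since it sidesteps the question of whether compactness alone yields sequential compactness, but it is not needed beyond what the paper does.
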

\begin{proof}
The space $\PS[\#|E|-1]$ is compact, and thus the sequence $\lbrace\PP\urho(C_k)\rbrace\subset\PS[\#|E|-1]$ must contain a convergent subsequence, which by definition corresponds to an admissible sequence of smooth curves.
\end{proof}
\begin{prop}\label{prop:compactness}
Let $\lbrace X_k\rbrace \subset \calM$ be a sequence of smooth jet curves converging to a stable jet curve $X$. If the limit RN differential exists, it is given by a collection of RN differentials on $C^v$ with prescribed singularities  at $p_\ell$ and with the residue at $q_e$ being $i$ times the solution of the multi-scale flow Kirchhoff problem for some $\PP\urho\in\PS[\#|E|-1]$.
\end{prop}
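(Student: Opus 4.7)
The plan is to reduce to the admissible case by passing to a subsequence, and then invoke the previously established results on limits along admissible sequences. Since the statement is essentially a compactness-plus-subsequence corollary of Theorem~\ref{thm:limits} and Lemma~\ref{lm:multiKirchhoff}, there is no serious obstacle — the real content sits in those earlier results. The only thing to verify is that subsequential extraction is compatible with the assumed existence of the limit.

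Concretely, assume that $\Psi=\lim_{k\to\infty}\Psi_{X_k}$ exists. First, I would apply Lemma~\ref{lm:subseq} to extract an admissible subsequence $\lbrace X_{k_j}\rbrace$, producing a rate of degeneration
\[
\PP\urho := \lim_{j\to\infty}\PP\urho(C_{k_j}) \in \PS[\#|E|-1].
\]
Since $\Psi$ is the limit of the full sequence, it is a fortiori the limit along this admissible subsequence, and likewise the residues $r_{\ell,k_j}$ at the marked points converge to the prescribed residues $r_\ell$ of the limit stable jet curve $X$.

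Then Theorem~\ref{thm:limits} applied to $\lbrace X_{k_j}\rbrace$ forces the solutions $c_{e,k_j}$ of the flow Kirchhoff problems with inflows $ir_{\ell,k_j}$ and resistances $\urho_{k_j}$ to converge, and identifies the restriction $\Psi^v$ on each component $C^v$ as the RN differential with the prescribed singular parts at the marked points $p_\ell\in C^v$ and simple poles at $q_e$ with residues $i\lim_{j\to\infty} c_{e,k_j}$ for each $e\in E_v$. Finally, Lemma~\ref{lm:multiKirchhoff}, applied to the admissible subsequence with limiting projectivized resistance $\PP\urho$ and limiting inflows $ir_\ell$, identifies $\lim_{j\to\infty} c_{e,k_j}$ as the solution of the multi-scale flow Kirchhoff problem on the dual graph $\Gamma$ with generalized resistance $\PP\urho$. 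Assembling these pieces gives the description of $\Psi$ asserted in the proposition.
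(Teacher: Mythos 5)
Your proposal is correct and follows essentially the same route as the paper's proof: extract an admissible subsequence via Lemma~\ref{lm:subseq}, use Lemma~\ref{lm:multiKirchhoff} together with Theorem~\ref{thm:limits} along that subsequence, and note that the assumed full-sequence limit coincides with the subsequential one. The only (immaterial) difference is the order in which you invoke the two results --- you use the ``only if'' direction of Theorem~\ref{thm:limits} and then identify the limiting currents via Lemma~\ref{lm:multiKirchhoff}, while the paper first gets convergence of the Kirchhoff solutions from Lemma~\ref{lm:multiKirchhoff} and then applies the ``if'' direction.
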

\begin{proof}
By lemma~\ref{lm:subseq}, the sequence $\lbrace X_k\rbrace$ must contain an admissible subsequence, with resistances converging to some $\PP\urho\in\PS[\#|E|-1]$. By lemma~\ref{lm:multiKirchhoff} in such a subsequence there exists a limit of solutions of the flow Kirchhoff problem, and it is given by the solution of the multi-scale Kirchhoff problem with resistances $\PP\urho$. Finally, by theorem~\ref{thm:limits}, the convergence of solutions of the flow Kirchhoff problem implies the existence of the limit RN differential in this subsequence, of the form claimed. Since the limit RN differential is assumed to exist for all of the original sequence, it must be of the form claimed.
\end{proof}

This completes the statement of our results on limits of RN differentials. The full details and statements of our results on limits of zeroes of RN differentials will be given in section~\ref{sec:zeroes}, after the main technical tool of solving the \RHP is introduced.

\section{Plumbing setup for Riemann surfaces}\label{sec:plumbing}
We now recall the full details of the plumbing construction discussed in the introduction, and fix the notation that will be used throughout the rest of the paper and in all the proofs.
\begin{df}[Standard plumbing]\label{df:stdplumbing}
Let $q_1,q_2\in C$ (with $C$ a possibly disconnected Riemann surface) be two distinct points. Let $z_1,z_2$ be local coordinates on $C$ near $q_1,q_2$ such that $z_j(q_j)=0$ and furthermore sufficiently small so that the maps $z_j$ embed the unit disk in the complex plane as disjoint neighborhoods  $V_j:=\lbrace |z_j|<1\rbrace\subset C$ of $q_j$. Then for any $s\in\CC$ with $|s|<1$ we denote $U_j=U_j^s:=\lbrace |z_j|<\sqrt{|s\,|}\rbrace\subset V_j$ the corresponding disks, and  denote $\gamma_j:=\partial U_j$ their boundary circles, which we orient negatively with respect to $U_j$. The {\em standard plumbing $C_s$ with parameter $s$} is the Riemann surface
$$
C_{s}:=\left[C\setminus (U_{1}\sqcup U_{2})\right] /( \gamma_{1}\sim\gamma_{2})
$$
where $\gamma_{1}$ is identified with $\gamma_{2}$ via the diffeomorphism $I(z_1):=s/z_1$. The structure of a Riemann surface on $C_s$ is defined by saying that a function on $C_s$ is holomorphic, if it is holomorphic on the complement of the {\em seam} $\gamma$ (the image of $\gamma_1$ and $\gamma_2$) and continuous along the seam.
\end{df}

\begin{df}[Plumbing coordinates on moduli]\label{df:plumbing}
Local plumbing coordinates on $\overline\calM_{g,n}$ near a stable curve $C\in\partial\overline\calM_{g,n}$ are defined as follows. Let $\tilde C$ be the normalization of $C$, which is a smooth (possibly disconnected) Riemann surface with marked points $p_\ell$, and also with all the preimages of the nodes as marked points.

We think of $\tilde C$ as a point in a suitable Cartesian product of moduli spaces of curves with marked points. Let $u=(u_1,\dots,u_x)$ be some local coordinates on this product of moduli spaces; we write $\tilde C_u$ for the (possibly disconnected) curve in this moduli space with coordinates $u$, so that the coordinates of $\tilde C$ are all $u_i=0$. Choose, for all $u$ sufficiently small, a holomorphically varying family of local coordinates $z_e$ in the neighborhood on $\tilde C_u$ of every preimage $q_e$ of every node of $C_u$, scaled (by dividing by a large real number) to be sufficiently small so that the unit disks in these coordinates are all disjoint on $\tilde C_u$.

Then $u$ together with a set of plumbing parameters $\us:=\lbrace s_{|e|}\rbrace\in\Delta^{\# |E|}$, for $\Delta\subset\CC$ a sufficiently small disk, give local coordinates on $\overline\calM_{g,n}$ near $C$ (see~\cite{bers}).
\end{df}
\begin{rem}
Different versions of plumbing are available in the literature. First of all, one usually considers the neighborhoods $V_j=\lbrace |z_j|<\epsilon\rbrace$ in the local coordinates, for some sufficiently small $\epsilon$; by rescaling $z_j$ by a real number this is of course equivalent to our setup.

The plumbing that we use, by identifying the boundaries of two cut out disks directly, is perhaps the earliest one, going back to \cite{bers}. It is clearly seen to be equivalent to cutting out closed disks of radii $|s|$ around $q_j$, and then identifying two boundary annuli in this open Riemann surface: if one has identified along the annuli, then one can alternatively cut the glued surface along the middle circle of the resulting glued annulus, and switch to our viewpoint. In \cite{acgh2},\cite{wolpertplumbing}, \cite{strata}, plumbing using a plumbing fixture is performed --- which is the analytic description of the algebraic versal deformation coordinates. This third kind of plumbing can also easily be seen to be equivalent to the original version that we use, by cutting the plumbing fixture $xy=t$ along the circle $|x|=|y|$. The advantage of the approach using a fixed plumbing fixture is the ability to see explicitly the algebraic structure of the degenerating family of Riemann surfaces as the node forms, and to interpret plumbing coordinates as versal deformations of nodal curves.
\end{rem}
\begin{rem}\label{rem:meaninglimit}
The version of plumbing that we use is most suited to understanding limits of 1-forms under degeneration. Indeed, in our setup if $\lbrace C_k\rbrace$ is a sequence of smooth Riemann surfaces converging to $C$, then each $C_k$ is obtained by identifying the boundaries of a subset of $C$. Thus we can interpret a sequence $\Phi_k$ of meromorphic differentials on $C_k$ as a sequence of differentials on a sequence of growing subsets of $C$, tending to all of $C$ as $k\to\infty$. Thus the limit $\lim_{k\to\infty}\Phi_k$, if it exists, automatically makes sense as a collection of meromorphic differentials $\Phi^v$ on the irreducible components $C^v$ of $C$.
\end{rem}

To keep the notation manageable, we will write $\tilde C:=\tilde C_{u,\underline{0}}$ for the normalization of a nodal curve, and will consistently drop $u$ when no confusion is possible. We write $\tilde C$ as the union of its connected components $\tC=\cup \,C^v$ indexed by vertices $v\in V(\Gamma)$ of the dual graph $\Gamma$ of $C$. Recall that we write $e$ for an {\em oriented} edge of $\Gamma$ and $|e|$ for the unoriented edge. We write $q_e\in C^{v(e)},\ v(e):=\operatorname{target}(e)$ for the corresponding preimage $q_e$ of the node $q_{|e|}=q_e\sim q_{-e}$ of $C$. To simplify notation we will also write $s_e=s_{-e}=s_{|e|}$.
We then write $\widehat C_{\us}:=\widetilde C\setminus (\cup_e U_e^{s_e})$ for the closed Riemann surface with boundary obtained by removing these open disks from $\tilde C$. Identifying for each $|e|\in|E|(\Gamma)$ the boundaries $\gamma_e^{s_e}$ and $\gamma_{-e}^{s_e}$ of $\widehat C_{\us}$ via the map $I_e$ sending $z_e$ to $s_e/z_e$ gives precisely the plumbed Riemann surface $C_{u,\us}$. When speaking of a one-form $\omega$ on $\tC$ or on $\widehat C$, we mean a collection of one-forms $\omega^v$ on the set of connected components of $\tC$ or of $\widehat C$.

Since $\overline\calM$ is the total space of a fibration over $\overline\calM_{g,n}$, local coordinates on it near some $X\in\partial\overline\calM$ are given by $u,\us$, together with some local coordinates $w$ for the fiber of the fibration. We will thus write a stable jet curve with these coordinates as $X_{w,u,\us}$, with $C_{u,\us}$ as the underlying stable curve.

A meromorphic differential $\Phi$ on $\widehat C_\us$ (which, recall, is the shorthand for a collection of meromorphic differentials $\Phi^v$ on $\widehat C_\us^v$) glues to define a meromorphic differential on $C_\us$ if and only if
\begin{equation}\label{eq:nojump}
\left.\Phi^{v(e)}\right|_{\gamma_e}=I_e^*\left(\left.\Phi^{v(-e)}\right|_{\gamma_{-e}}\right)
\end{equation}
for all $e$.
\begin{rem}
Of course not every differential $\Phi$ on $\widehat C_\us$ satisfies~\eqref{eq:nojump} and glues to a differential on $C_\us$. One standard setup is for differentials with simple poles at preimages of the nodes, with opposite residues. Choosing coordinate $z_e$ near $q_e$ such that locally $\Phi^{v(e)}=a_e dz_e/z_e$, with $a_e=-a_{-e}$, and performing plumbing in these coordinates, one constructs a glued differential on $C_\us$. More generally, one can choose standard coordinates associated to a differential to glue a zero of order $k$ to a pole of order $k+2$ with no residue, as discussed and applied in \cite{gendron,chen,strata}. As a result, and with much further work to deal with the residues appearing, one constructs a meromorphic differential on some smooth Riemann surface $C_\us$ near $C$, in plumbing coordinates. However, since the local coordinates $z_e$ depend on the differential, it is hard to ensure from this viewpoint that all suitable differentials on all smooth Riemann surfaces in a neighborhood can be obtained in this way.
\end{rem}
Our approach is direct and analytic. We start with any collection of fixed local coordinates $z_e$ near $q_e$ (for any $u$), and thus with fixed plumbing coordinates on the moduli space. Given any $\Phi$ on $\widehat C_\us$, we will subtract from it another differential $\omega$ on $\widehat C_\us$ such that their difference satisfies~\eqref{eq:nojump}, and thus defines a differential on $C_\us$. The condition for $\omega$ must then be that its ``jumps'' on $\gamma_e$ are the same as for $\Phi$, and we construct it by explicitly solving the \RHP.

\section{The \RHP}\label{sec:rhp}
Given a compact Riemann surface with a collection of closed loops in it, the \RHP is the problem of constructing a holomorphic differential on the complement of these loops, such that it extends continuously to each loop from the two sides, and its boundary values there differ by a prescribed jump. Equivalently, we think of the \RHP as posed on a Riemann surface with boundary, where the boundary components are identified pairwise, and the solution of the \RHP is a differential on the interior that extends continuously to the boundary, and such that the differences of its boundary values are the prescribed ``jumps". The classical approach to solving the \RHP on is surveyed in~\cite{zverovich}, and explained in full detail in~\cite{rodin}. The \RHP is solved by integrating the jumps with respect to the Cauchy kernel on the Riemann surface.

We are interested in constructing RN differentials in plumbing coordinates, and thus in solving the \RHP on $C_\us$. Since the Cauchy kernel on $C_\us$ depends on $\us$, determining the behavior of the solution of the \RHP under degeneration as $\us\to 0$ is hard, and has not been accomplished in the literature. Instead, we use the Cauchy kernel on the normalization $\tilde C$ of the nodal curve, to construct differentials on $\tilde C$ with prescribed jumps along the seams, considered as closed loops on $\tilde C$. By an explicit control of the constructed solution of the \RHP in the small neighborhoods of the nodes, we can then correct this solution, in an iterative way, to eventually construct the desired solution of the \RHP on $C_\us$. As this only uses the Cauchy kernel on $\tilde C$, which is independent of $\us$, we can determine the behavior of the solution under degeneration. Our interest in the current paper is in  solving the \RHP to construct RN differentials; our method was then used by Hu and the third author in~\cite{hunorton} to study a normalized basis of differentials, which turns out to be easier as holomorphic dependence on parameters can be used.

Throughout this section, we will only work on smooth jet curves, i.e.~all $s_e$ are always assumed to be non-zero. For convenience, we denote $|\us\,|:=\max_e |s_e|$.

The \RHP is an additive analog of the (multiplicative) Riemann-Hilbert problem posed on a Riemann surface $\widehat C_{u,\us}$ with $\#E$ boundary components. The initial data for the \RHP is a set $\uphi$ of complex-valued smooth 1-forms $\phi_e$ on $\gamma_e$, which we call {\em jumps}. The jumps are required to satisfy $\phi_e=-I_e^*(\phi_{-e})$ and  $\int_{\gamma_e}\phi_e=0$ for all $e\in E$.
\begin{df}\label{prob:rhp}
The {\em \RHP} is to find a holomorphic 1-form $\omega$ on the interior of $\widehat C_{u,\us}$ that extends continuously to every boundary component $\gamma_e$ of $\widehat C_{u,\us}$, and such that the boundary extensions have jumps $\phi_e$, i.e.~satisfy for any $e$ the equation
$$
  \omega|_{\gamma_e}-I_e^*\left(\omega|_{\gamma_{-e}}\right)=\phi_e.
$$
\end{df}
Equivalently, the \RHP is the problem of constructing differentials on $C_{u,\us}$ continuous away from the seams $\gamma_{|e|}$ and with prescribed differences of boundary values on the two sides of each seam.

The solution of the \RHP is never unique: the pullback to $\widehat C_{u,\us}$ of any holomorphic differential on $C_{u,\us}$ has zero jumps, and can be added to any solution to produce another solution. Our main technical tool is an explicit construction of a suitably normalized solution, which we will call ARN, with explicit bounds for it.
\begin{prop}\label{prop:RH}
There exists a constant $t$ independent of $u$, such that for any $|\us|<t$ and any $\uphi$, the \RHP has a unique solution $\omega$ on $\widehat C_{u, \us}$ satisfying
\begin{itemize}
\item $\int_{\gamma_e}\omega=0$ for any $e$;
\item $\int_\gamma\omega\in\RR$ for any cycle $\gamma\in H_1(\tC_{u},\ZZ)$.
\end{itemize}
The solution $\omega$ is given explicitly as the sum $\xi+\chi$, with $\xi$ and $\chi$ being the restrictions to $\widehat C_{u,\us}$ of the integrals~\eqref{int},\eqref{int1}, where the smooth real 1-forms $h_e,g_e$ on $\gamma_e$ are defined as the sums of the series~\eqref{rec}.
\end{prop}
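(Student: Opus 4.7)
The plan is to reduce the \RHP to a Neumann-series fixed-point problem: write the candidate solution as a Cauchy-type integral against a kernel on the fixed normalization $\tilde C_u$, and exploit the smallness of the plumbing radii $\sqrt{|s_e|}$ to get a contracting factor.

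First I would fix a Cauchy kernel $K(p,q)$ on $\tilde C_u$ --- a meromorphic $1$-form in $p$, depending smoothly on $(q,u)$, with a single simple pole at $p=q$ of residue $1$ --- normalized so that as a function of $p$ it has purely real periods on a basis of $H_1(\tilde C_u,\ZZ)$. I would then take the ansatz
\[
\xi(p)=\sum_{e\in E}\frac{1}{2\pi i}\oint_{\gamma_e} K(p,\cdot)\,h_e
\]
for unknown smooth real $1$-forms $h_e$ on $\gamma_e$ with $\int_{\gamma_e}h_e=0$, and a parallel ansatz $\chi$ built from auxiliary real $1$-forms $g_e$. By Plemelj--Sokhotski, the boundary value of $\xi$ on $\gamma_e$ from the $\widehat C_{u,\us}$ side equals a principal-value transform of $h_e$ on $\gamma_e$ plus a contribution, smooth in $p$, from all $h_{e'}$ with $|e'|\neq|e|$. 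Substituting into the plumbing jump condition $\xi|_{\gamma_e}-I_e^*\xi|_{\gamma_{-e}}=\phi_e$ turns the \RHP into a linear equation $(\mathbb{I}+T_{\us})\uh=\uphi$ on the space of such $1$-forms.

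The main obstacle, and the technical heart of the proof, is showing that $\|T_{\us}\|=O(|\us|^{1/2})$ in a suitable Banach norm, uniformly for $u$ in a compact neighborhood of $0$. Two kinds of contributions appear in $T_{\us}$. For $|e'|\neq|e|$, integration of the smooth kernel $K(p,\cdot)\,h_{e'}$ over $\gamma_{e'}$ followed by restriction to $\gamma_e$ (or pullback by $I_e$) is $O(\sqrt{|s_{e'}|})$, because $\gamma_{e'}$ has length $O(\sqrt{|s_{e'}|})$ and $K$ is bounded away from its diagonal. For the diagonal cross-term $e'=-e$, the pullback $I_e^*$ sends $\gamma_{-e}$ to $\gamma_e$ via $z\mapsto s_e/z$; here I would Taylor-expand $K(p,q)$ in $q$ about $q=q_e$ and use the imposed vanishing $\int_{\gamma_{-e}}h_{-e}=0$ to kill the naive $O(1)$ leading term, leaving only $O(\sqrt{|s_e|})$ contributions. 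The point that makes both estimates work is precisely that $K$ is independent of $\us$: its Taylor coefficients at the punctures $q_e$ are bounded uniformly in $\us$, and the only small scale in the problem is the circle radius $\sqrt{|s_e|}$.

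Once this bound is established, for $|\us|<t$ small enough (and $t$ independent of $u$) the operator $\mathbb{I}+T_{\us}$ is invertible by a Neumann series, realizing $\uh=\sum_{k\ge 0}(-T_{\us})^k\uphi$ as the recursively defined series in the proposition; the resulting $\xi$ solves the jump condition. I would then fix $\chi$ by the finite-dimensional linear problem of imposing $\int_{\gamma_e}(\xi+\chi)=0$ and reality of $\int_\gamma(\xi+\chi)$ for $\gamma\in H_1(\tilde C_u,\ZZ)$; the coefficient matrix is a small perturbation of an invertible one (essentially the imaginary part of the period matrix of $\tilde C_u$ together with the natural residue pairing), so it is also inverted by a convergent series, giving the $g_e$. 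Uniqueness is immediate: the difference of any two solutions glues across the seams to a holomorphic differential on the smooth plumbed curve $C_{u,\us}$ with vanishing seam integrals and real periods on all of $H_1(C_{u,\us},\ZZ)$, and such a differential vanishes identically.
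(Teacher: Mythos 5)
Your uniqueness sketch and your operator estimate (an $\us$-independent kernel on the normalization, seam length of order $\sqrt{|s_e|}$, and the use of $\int_{\gamma_e}h_e=0$ to kill the $O(1)$ term) are in the same spirit as the paper's argument, but the core of your construction has a genuine gap: you try to satisfy the \emph{full complex} jump condition with the single transform $\xi$ built from \emph{real} densities $h_e$, and you use $\chi$ only afterwards as a finite-dimensional normalization correction. This does not close. The jumps $\phi_e$ are complex-valued, while $\uh\mapsto\uh+T_{\us}\uh$ is only real-linear from real densities into complex forms; for small $\us$ its image is a small perturbation of the space of real-valued forms, so for instance a purely imaginary $\uphi$ is not attainable, and $(\underline I+T_{\us})\uh=\uphi$ has no real solution in general. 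If instead you allow complex $h_e$, you lose the automatic reality of the periods of $\xi$, and your correction step cannot restore it: a $\chi$ given by a seam-supported Cauchy transform with densities $g_e$ has jump $g_e$ plus small terms, so requiring that $\xi+\chi$ still solve the jump problem forces $\ug=0$ for small $\us$; correcting instead by holomorphic differentials of $\tC_u$ (which is what your matrix ``period matrix of $\tC_u$ plus residue pairing'' describes) also re-introduces jumps, since a differential on the normalization does not satisfy~\eqref{eq:nojump} across the seams. Even replacing $K$ by $iK$ in the $\chi$-ansatz would not help, as its periods would then be purely imaginary.

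The missing idea is precisely how the paper couples the two real families in one system: it uses two kernels on each $C^v$, the real-normalized kernel $K^{re}$ (all periods real) and the ``mod $\ZZ$'' real-normalized kernel $K^{im}$ (imaginary parts of periods integral, hence annihilated after integration against $g_e$ with $\int_{\gamma_e}g_e=0$), whose transforms produce jumps $h_e$ and $ig_e$ respectively; the single real-linear equation $(\Re\uphi,\Im\uphi)=(\underline I+\underline\calK)(\uh,\ug)$ of~\eqref{eq:phifinal} is then inverted by the Neumann series, and both normalizations ($\int_{\gamma_e}\omega=0$ and reality of periods over $H_1(\tC_u,\ZZ)$) hold \emph{automatically} for $\xi+\chi$ --- no a posteriori finite-dimensional correction is needed, nor is one available within the jump constraint. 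A smaller point: in your uniqueness argument you assert that the difference of two solutions has real periods over all of $H_1(C_{u,\us},\ZZ)$; the normalization only gives reality on cycles avoiding the seams together with vanishing seam integrals, and periods of cycles crossing the seams are not controlled a priori, so one needs the Stokes'-theorem computation~\eqref{stokes} (or an equivalent reciprocity argument) to conclude that the difference vanishes.
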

We will call this $\omega$ the {\em almost real-normalized (ARN)} solution of the \RHP.
Note that the condition on $\gamma$ in the second statement is equivalent to taking $\gamma\in H_1(C_{u,\us},\ZZ)$ not intersecting any of the seams.
The importance of the proposition is the explicit construction, which will eventually allow us to give estimates for the ARN solution as the curve degenerates.
\begin{proof}
The proof of this proposition will occupy the bulk of this section.

\subsubsection*{Uniqueness of the ARN solution}
Suppose $\omega_1$ and $\omega_2$ were two different ARN solutions of the \RHP with the same initial data. Then $\omega:=\omega_1-\omega_2$ would have zero jumps, and would thus be a holomorphic 1-form on $C_{u,\us}$ with zero integral over any seam $\gamma_e$, and with real integrals over any path contained in $\widehat C^v_{u,\us}$.

To deduce that $\omega$ is identically zero we use the Stokes' theorem on each $\widehat C^v_{u,\us}$, and then sum over the components (a similar method will be used again later on). Choose an arbitrary point $p_0\in C^v$; then $F^v(p):=\Im\int_{p_0}^p\omega|_{C^v}$ is a single-valued real harmonic function on $\widehat C^v_{u,\us}$, since all the periods of $\omega^v$ are real. The harmonic conjugate function $F^{v*}(p)=\Re\int_{p_0}^p\omega$ is multiple-valued, but locally defined up to an additive constant, and thus we can still write $\omega^v=dF^{v*}+idF^v$. We use Stokes' theorem to compute the $L^2$ norm of $\omega^v$ on $C^v$:
\begin{equation}\label{stokes}
\begin{aligned}
\frac{i}{2}\int_{\widehat C^v_{u,\us}}\omega^v\wedge\bar\omega^v&=\frac{i}{2}\int_{\widehat C^v_{u,\us}}(dF^{v*}+idF^v)\wedge(dF^{v*}-idF^v)\\ &=\int_{\widehat C^v_{u,\us}}dF^{v*}\wedge dF^v=-\sum_{e\in E_v}\int_{\gamma_e}F^vdF^{v*},
\end{aligned}
\end{equation}
where we have used the fact that $F^v$ is a well-defined single-valued function on $\widehat C^v_{u,\us}$ and that the boundary of $\widehat C^v_{u,\us}$ is the collection of $\gamma_e$ for all $e\in E_v$.

We now take the sum of these equalities over all $v$; the summands on the right come in pairs $\lbrace e,-e\rbrace$. Since $\omega$ is a holomorphic differential on the plumbed surface $C_{u,\us}$, the restrictions $\left.F^{v(e)}\right|_{\gamma_e}$ and $I_e^*\left(\left.F^{v(-e)}\right|_{\gamma_{-e}}\right)$ on any seam differ by some constant of integration $C_e$, while the restrictions of the differentials $\left.dF^{v(e)*}\right|_{\gamma_e}$ and $\left.dF^{v(-e)*}\right|_{\gamma_{-e}}$ are equal under pullback by $I_e$. We thus compute
$$
 \int_{\gamma_e}F^{v(e)}dF^{v(e)*}+\int_{\gamma_{-e}}F^{v(-e)}dF^{v(-e)*}=C_e\int_{\gamma_e}dF^{v(e)*}\,,
$$
where we recall that the map $I_e:\gamma_e\to\gamma_{-e}$ is orientation-reversing. Since $\int_{\gamma_e}\omega^{v(e)}=0$ by the definition of the ARN solution, it follows that also $\int_{\gamma_e}dF^{v(e)*}=0$, and thus finally the sum of~\eqref{stokes} for all $v$ vanishes. Altogether it then follows that $\int_{C_{u,\us}}\omega\wedge\bar\omega=0$, which  implies that $\omega$ is identically equal to zero.

\subsubsection*{Construction of the ARN solution}
We first recall the notion of Cauchy kernels, then use the appropriate versions of the kernel to essentially deal with the real and imaginary parts of the initial data, and then construct the ARN solution by explicitly writing the inverse of the relevant integral operator as a sum of a convergent series.

\subsubsection*{The Cauchy kernels}
Recall that for a genus $g$ compact Riemann surface $\calC$ with a symplectic basis $\lbrace A_k,B_k\rbrace$ of $H_1(\calC,\ZZ)$, the normalized basis of the space of holomorphic differentials on $\calC$ is prescribed by the condition $\int_{A_k}\omega_j=\delta_{j,k}$.
Denote $\tau_{j,k}=\int_{B_k}\omega_j$  the period matrix of  $\calC$, and  $\theta(z)=\theta(\tau,z)$ the corresponding theta function. We denote by $A$ the Abel map of $\calC$ to $\CC^g$ sending $q_0$ to zero, and denote $\kappa$ the corresponding Riemann constant. Then for any sufficiently general collection of fixed $g$ points  $q_0,\dots, q_{g-1}\in \calC$ the {\em normalized Cauchy kernel} is defined as
\begin{equation}\label{cauchy-ker}
K_\calC(p,q):=\frac 1 {2\pi i}d_p\ln \frac{\theta(A(p)-A(q)-Z)}{\theta(A(p)-Z)}\,,\ \  Z:=\sum_{j=1}^{g-1}. A(q_j)+\kappa,
\end{equation}
where by $d_p$ we mean the exterior differential with respect to~$p$, for $q$ fixed, so that the result is a differential form in~$p$.
For $q$ fixed, $K_{\calC}$ is then a meromorphic differential in the variable $p$ with simple poles with residues $\pm (2\pi i)^{-1}$ at $p=q$ and at $p=q_0$, respectively. For $p$ fixed, $K_\calC$ is a multi-valued meromorphic function of $q$ with the only pole at $q=p$.
The Cauchy kernel $K_\calC$ is normalized in the sense that all its $A$-periods are zero: $\int_{p\in A_k}K_\calC(p,q)=0$. 
On the sphere, the Cauchy kernel becomes simply $K_{\CC\PP^1}(p,q)=\frac{1}{2\pi i}\frac{d p}{p-q}$.

The Cauchy kernel is used to solve the~\RHP, as we now recall (see~\cite{rodin}). Given a smooth closed simple curve $\gamma\subset\calC$ and a smooth 1-form $\phi$ on $\gamma$, the Cauchy integral transform $\int_{q\in\gamma}K_\calC(p,q)\phi(q)$ defines a holomorphic 1-form in the variable $p$, for $p\not\in\gamma$ (the holomorphicity for $p\ne q_0$ follows from the holomorphicity of the kernel $K_\calC$ in $p$, so that we can then differentiate under the integral sign, while the holomorphicity at $p=q_0$ follows from the fact that the residue of the Cauchy kernel is independent of~$q$, while $\int_{\gamma_e}\phi=0$). The Sokhotski-Plemelj formula is the statement that the boundary values of this expression for $z\in\gamma$ are
\begin{equation}\label{eq:SP}
  \lim_{p'\to p\in\gamma} \left(\int_{q\in\gamma}K_\calC(p',q)\phi(q)\right)=\pm\frac12 \phi(p)+\fint_{q\in\gamma}K_C(p,q)\phi(q),
\end{equation}
where we used the classical notation $\fint$ for the Cauchy principal value of the singular integral. Locally the neighborhood of $\gamma\subset \calC$ looks like an annulus with $\gamma$ being the middle circle, and then the limit is taken for $p'$ lying in a fixed component of the complement of $\gamma$ in this annulus. The sign in the Sokhotski-Plemelj formula is the orientation of the contour $\gamma$ as the boundary component of the corresponding half of the annulus. The Sokhotski-Plemelj formula implies that the integral transform with respect to the Cauchy kernel solves the \RHP.

\smallskip
To continue the construction of the RN differential, instead of the $A$-normalized Cauchy kernel we will now introduced the suitably real-normalized Cauchy kernel, defined as follows:
\begin{equation}\label{cauchy-re}
  K^{re}_\calC(p,q):=K_\calC(p,q)-\sum_{k=1}^g\alpha_k (q)\,\omega_k(p),
\end{equation}
where $\alpha_k$ are the coordinates of the vector $\alpha(q):=(\Im\tau)^{-1}\, (\Im A(q))$. For $q$ fixed, $K_\calC^{re}$ is also a meromorphic differential in $p$ with simple poles at $p=q$ and $p=q_0$ with residues $\pm (2\pi i)^{-1}$. For $p$ fixed, $K_\calC^{re}$ is a {\em single-valued real-analytic} function of the variable $q$, away from $q=p$. As can be easily checked from the monodromy properties of the theta function, all periods of $K_\calC^{re}$ are real: $\int_{p\in\gamma}K^{re}_{\calC}(p,q)\in\RR,\quad \forall \gamma\in H_1(\calC,\ZZ).$

Similarly, we introduce
\begin{equation}\label{cauchy-im}
K^{im}_\calC(p,q):=iK_\calC(p,q)-i\sum_{k=1}^g\beta_k (q)\,\omega_k(p),
\end{equation}
where $\beta_k$ are the coordinates of the vector $\beta(q):=(\Im\tau)^{-1}\, (\Re (A(q))$.
For $q$ fixed, $K_\calC^{im}$ is a meromorphic differential in $p$ with simple poles  at $p=q$ and $p=q_0$ with residues $\pm (2\pi )^{-1}$. For $p$ fixed, $K_\calC^{im}$ is a multi-valued real-analytic function of $q$ away from $q=p$. We note that $\Im \int_{p\in\gamma}K_\calC^{im}(p,q)\in \ZZ,\quad \forall \gamma\in H_1(\calC,\ZZ).$

Analogues of the Sokhotski-Plemelj formula hold for $K^{re}$ and $K^{im}$. For these analogues, in the right-hand-side of~\eqref{eq:SP} the Cauchy principal value of the integral transform of $\phi$ with respect to $K^{re}_\calC$ or $K^{im}_\calC$, correspondingly, should be taken, while for $K^{im}_\calC$, the $\phi$ is also multiplied by $i$. To prove this, note that $K^{re}_\calC$ and $K^{im}_\calC$ differ from $K_\calC$ by adding some differential $\omega(p,q)$ holomorphic in $p$ (while $K^{im}$ further multiplies by $i$), so that the Cauchy principal value of the integral transform of $\phi$ with respect to this added holomorphic differential is simply the value of the integral, which thus contributes $\int_{q\in\gamma} \omega(p,q)\phi(q)$ on both sides of Sokhotski-Plemelj formula~\eqref{eq:SP}.

\subsubsection*{The integral transform with respect to $K^{re}$}
We now  apply the Cauchy kernels to obtain a solution of the \RHP, using $K^{re}$ and $K^{im}$ to ensure the ARN condition.

For any set $\uh=\{h_e\}$ of real-valued smooth 1-forms on $\lbrace\gamma_e\rbrace$ such that $h_e=-I^*_e(h_{-e})$ for any $e$, and all periods are zero:
\begin{equation}\label{zero-period}
 \int_{q\in\gamma_e} h_e(q)=0,
\end{equation}
we define for $p\in C^v$ the Cauchy integral transform
\begin{equation}\label{int}
 \xi^v(p):=\sum_{e\in E_v} \int_{q\in \gamma_e} K^{re}_{C^v}(p,q) h_e(q).
\end{equation}
We emphasize that the formula above is an integral transform on the normalization $C^v$ of the irreducible component of $C_{u,0}$. In particular the kernel $K_v^{re}$ is {\em independent} of $\us$.
A priori $\xi^v$ can have a pole at $p=q_0$. However, since the residue of $K^{re}_v$ at $p=q_0$ is equal to $-(2\pi i)^{-1}$ for any $q$, the residue of $\xi^v$ at $p=q_0$ is equal to $-(2\pi i)^{-1} \sum_{e\in E_v}\int_{q\in \gamma_e}  h_e(q)$, which vanishes by~\eqref{zero-period}. We thus view $\xi^v$ as a collection of holomorphic differentials $\xi_{e}$ on each $U_e$ for $e\in E_v$, and the holomorphic differential $\widehat\xi^v$ on $\widehat C^v_\us$. We observe that for any closed path $\gamma\subset \widehat C^v$ the integral
$$
 \int_{p\in\gamma}\widehat\xi^v=\sum_{e\in E_v} \int_{p\in\gamma}\int_{q\in \gamma_e} K^{re}_{C^v}(p,q) h_e(q)=\sum_{e\in E_v} \int_{q\in \gamma_e} (\int_{p\in\gamma}K^{re}_{C^v}(p,q)) h_e(q)
$$
is real, since $\int_{p\in\gamma}K^{re}_{C^v}(p,q)\in\RR$l, and $h_e$ real-valued.

\smallskip
We now study the singular part of the kernel in more detail.
For $e,e'\in E_v$, $e\ne e'$ we write $z_e=z_e(p)\in V_e$ and $w_{e'}=w_{e'}(q)\in V_{e'}$ for the local coordinates in these disks, and denote
\begin{equation}\label{tildeK}
{\bf K}^{re}_v(z_e,w_{e'})\,dz_e:=K_v^{re}(p,q)\,.
\end{equation}
For $p,q\in V_e$, let ${\bf K}^{re}_v$ be the {\em holomorphic} part of $K^{re}_v$:
\begin{equation}\label{Cauchy-local}
{\bf K}^{re}_v(z_e,w_e)\,dz_e:=K^{re}_{v}(p,q)-\frac{dz_e}{2\pi i(z_e-w_e)}.
\end{equation}

We define the integral operator  $\calK^{re}$ by
\begin{equation}\label{calK}
  (\calK^{re}\uh)_e(z_e):=dz_e\sum_{e'\in E_{v(e)}} \int_{w_{e'}\in\gamma_{e'}} {\bf K}^{re}_{v(e)}(z_e,w_{e'})h_{e'}(w_{e'}).
\end{equation}
Since each ${\bf K}^{re}_v$ is holomorphic in $z_e$, the operator $\calK^{re}$ sends a collection $\uh$ of {\em real-valued} 1-forms on the set of seams $\gamma_e$ to a collection of {\em holomorphic} 1-forms on the disks $V_e$ --- again, the holomorphicity is simply due to the fact that each ${\bf K}^{re}_v$ is holomorphic in $z_e$. By definition, for any $z_e\in V_e$ we have
\begin{equation}\label{int_loc}
  \xi^v(z_e)=(\calK^{re}\uh)_e(z_e)+dz_e\int_{w_e\in\gamma_e}\frac{h_e(w_e)}{2\pi i(z_e-w_e)}
\end{equation}
The integral in this expression is singular, and its boundary values for $z_e\in\gamma_e$ are given by the Sokhotski-Plemelj formula~\eqref{eq:SP}:
\begin{equation}\label{eq:sokhot}
\widehat\xi^v(z_e)=(\calK^{re}\uh)_e(z_e)+\frac 12 h_e(z_e)+dz_e \fint_{w_e\in\gamma_e}\frac{h_e(w_e)}{2\pi i(z_e-w_e)}
\end{equation}
and
\begin{equation}
\xi_{e}(z_e)=(\calK^{re}\uh)_e(z_e)-\frac 12 h_e(z_e)+dz_e \fint_{w_e\in\gamma_e}\frac{h_e(w_e)}{2\pi i(z_e-w_e)}.
\end{equation}

Since $\xi_e$ is holomorphic on $U_e$, its integral over $\gamma_e=\partial U_e$ vanishes by the residue theorem. By using assumption~\eqref{zero-period}, we thus obtain
\begin{equation}\label{vanishing}
  \int_{p\in\gamma_e}\widehat\xi^v(p)=\int_{p\in\gamma_e}\xi_{e}(p)+\int_{p\in\gamma_e}h_e(p)=0
\end{equation}

To use this for solving the \RHP on $\widehat C_{\us}$, we need to compare $\widehat\xi^{v(e)}|_{\gamma_e}$ to $I^*_e(\widehat\xi^{v(-e)})|_{\gamma_e}$. Using $I^*_e(h_{-e})=-h_e$, and recalling that $I_e^*$ is orientation-reversing
we compute
\begin{equation}
\frac {dz_e}{2\pi i }\cdot \fint_{w_e\in\gamma_e}\frac{h_e(w_e)}{z_e-w_e}-\frac{d(sz_e^{-1})}{2\pi i} \cdot \fint_{w_e\in\gamma_{e}}\frac{h_e(w_e)}{sz_e^{-1}-sw_e^{-1}}=
\end{equation}
$$=\frac {dz_e}{2\pi i z_e} \int_{w_e\in\gamma_e} h_e(w_e)=0$$
for the jump of the singular integral in~\eqref{eq:sokhot}. Using again that $h_e=-I_e^*(h_{-e})$, we can write the jump of $\widehat\xi$ at a point $z_e\in\gamma_e$ as
\begin{equation}\label{eq:jumpxi}
\left(\widehat\xi^{v(e)}-I_e^*(\widehat\xi^{v(-e)})\right)(z_e)=\left(h_e+(\calK^{re}\uh)_e- I_e^*\left(\left(\calK^{re}\uh\right)_{-e}\right)\right)(z_e).
\end{equation}
We define the matrix-valued (with indices $e\in E$) operator $\underline\calK^{re}$ by
\begin{equation}\label{ucalK}
 (\underline\calK^{re}\uh)_e:=(\calK^{re}\uh)_e- I_e^*\left(\left(\calK^{re}\uh\right)_{-e}\right),
\end{equation}
and then reinterpret equation~\eqref{eq:jumpxi} as saying that the jump of $\widehat\xi$ across $\gamma_e$ is equal to
$\left[(\underline I+\underline\calK^{re})\uh\right]_e$, where $\underline I$ is the identity matrix.

\subsubsection*{The integral transform with respect to $K^{im}$}
We now perform an analogous construction starting from $K^{im}$, to deal with the imaginary part of the jumps while preserving the reality of the periods.
For any set $\ug$ of real-valued smooth 1-forms $g_e$ on the seams $\gamma_e$ such that $g_e=-I^*_e(g_{-e})$ and $\int_{\gamma_e}g_e=0$ for any $e$, we define the Cauchy integral
\begin{equation}\label{int1}
\chi^v(p):=\sum_{e\in E_v} \int_{q\in\gamma_e} K^{im}_v(p,q)g_e(q).
\end{equation}
Recall that, unlike $K^{re}_v$, the kernel $K^{im}_v$ is a multi-valued function of $q$. However,  we claim that $\chi^v$ is well-defined. Indeed, the difference of any two values of $K^{im}_v$ is some holomorphic differential $\omega(p,q)$, and from the definition of $K^{im}_v$ it follows that $\Im\int_{p\in\gamma} \omega(p,q)\in\ZZ$ for any cycle $\gamma\subset C^v$ and any $q$. Thus for $\gamma$ fixed and $q$ varying, $\Im\int_{p\in\gamma} \omega(p,q)\in\ZZ$  is locally constant in $q$, and so $\Im\int_{p\in\gamma}\partial_q \omega(p,q)=0$ for any $\gamma$. But then $\partial_q\omega(p,q)$, considered as a function of $p$, is a holomorphic real-normalized differential --- which is thus identically zero, so that $\omega(p,q)$ is independent of $q$. It then follows that $\int_{q\in\gamma_e}\omega(p,q)g_e(q)=\omega(p)\int_{q\in\gamma_e}g_e(q)=0$. Thus finally the multivaluedness of $K^{im}_v$ cancels in the definition of $\chi^v(p)$.

We again think of each $\chi^v$ as a collection of holomorphic 1-forms $\chi_e$ on each disk $U_e$ for $e\in E_v$, and a holomorphic 1-form $\widehat \chi^v$ on $\widehat C^v_\us$. The Sokhotski-Plemelj formula in this case yields for $z_e\in \gamma_e$
\begin{equation}\label{eq:sokhot1}
  \widehat\chi^v(z_e)=(\calK^{im}\ug)_e(z_e)+\frac i2 g_e(z_e)+dz_e \fint_{w_e\in\gamma_e}\frac{ig_e(w_e)}{2\pi(z_e-w_e)}
\end{equation}
and
\begin{equation}
  \chi_{e}(z_e)=(\calK^{im}\ug)_e(z_e)-\frac i2 g_e(z_e)+dz_e \fint_{w_e\in\gamma_e}\frac{ig_e(w_e)}{2\pi(z_e-w_e)}
\end{equation}

Similarly to the case of $\calK^{re}$, this  implies $\int_{\gamma_e}\chi^v=0$ for any $e\in E_v$, analogously to~\eqref{vanishing}. Moreover, although the periods of $K^{im}_v$ are not real, the period of $\widehat\chi^v$ over any cycle $\gamma\subset\widehat C^v$ is real by the same argument as above:
the imaginary part of any period of $K^{im}_v$ is an integer. This integer is then independent of $q$, and thus when it is multiplied by $g_e(q)$, the integral over $\gamma_e$ vanishes, since $\int_{q\in \gamma_e}g_e(q)=0$.

Similarly to~\eqref{eq:jumpxi}, we compute the jump of $\widehat\chi$ at $z_e\in\gamma_e$ to be
\begin{equation}\label{eq:jumpchi}
(\,\widehat\chi^{v(e)}-I_e^*(\widehat\chi^{v(-e)}))(z_e)=(ig_e+(\calK^{im}\ug)_e-I^*_e(\calK^{im}\ug)_{-e})(z_e),
\end{equation}
and we interpret the right-hand-side as the operator $i\underline I+\underline\calK^{im}$ applied to $\ug$.

\subsubsection*{The ARN solution as a recursively defined series}
We now combine the pieces above to construct the ARN solution of the original \RHP on $C_{u,\us}$, with initial data $\uphi$. The jumps of $\widehat\xi$ and $\widehat\chi$ are given by~\eqref{eq:jumpxi} and~\eqref{eq:jumpchi}. Thus $\omega:=\widehat\xi+\widehat\chi$ is the ARN solution of the \RHP on $C_{u,\us}$ with initial data $\uphi$ if the jumps are correct, i.e.~if $\uh$ and $\ug$ satisfy  the linear integral equation
\begin{equation}\label{phi-h-g}
  \uphi=(\underline I+\underline\calK^{re})\uh+(i\underline I+\underline\calK^{im})\ug.
\end{equation}
To keep track of the reality of $\uh$ and $\ug$, we write out the real and imaginary parts of this equation separately, so it becomes
\begin{equation}\label{eq:phifinal}
\left(\begin{array}{c} \Re\, \uphi \\ \Im\, \uphi
\end{array} \right) = \left(\underline I+\underline\calK\right)\left(\begin{array}{c} \uh \\ \ug
\end{array} \right)
\end{equation}
where
\begin{equation}\underline \calK:=\left(\begin{array}{cc} \Re\, \underline\calK^{re} &\Re\, \underline\calK^{im}\\
\Im\, \underline\calK^{re}& \Im\, \underline\calK^{im}\end{array} \right)
\end{equation}
is now a real matrix-valued integral operator.
Thus finally our goal is to show that for any given $\uphi$, the linear integral equation~\eqref{phi-h-g} has a solution $\uh,\ug$. If the norm of the operator $\underline\calK$ is sufficiently small, then the inverse of the operator $\underline I+\underline\calK$ is given by the sum of the convergent series $\underline I+\sum_{l=1}^\infty (-\underline\calK)^l$, so that~\eqref{eq:phifinal} is solved by the convergent series
\begin{equation}\label{h-f}
  \uh:=\sum_{l=0}^\infty (-1)^l\uh^{(l)},\quad \ug:=\sum_{l=0}^\infty (-1)^l \ug ^{(l)}
\end{equation}
with the leading terms $\uh^0:=\Re \uphi$ and $\ug^0:=\Im\uphi$,  and the higher order terms defined recurrently by
\begin{equation}\label{rec}
  \uh^{(l)}+i\ug^{(l)}:=\underline\calK^{re}\uh^{(l-1)} +\underline\calK^{im}\ug^{(l-1)}.
\end{equation}

\subsubsection*{Bounds the terms of the series}
To complete the construction of the ARN solution, it thus remains to show that the series~\eqref{h-f} with terms defined by~\eqref{rec} are indeed convergent. For this, we will estimate the norm of the integral operator $\underline\calK$. The explicit recursive bounds that we obtain for the terms of the series~\eqref{rec} will be crucial in our further analysis of the behavior the ARN solution as $\us\to 0$, yielding eventually proposition~\ref{ARNPJP}, going beyond proving the convergence of the series~\eqref{rec}.

\smallskip
We define the $L^\infty$ norm of a collection of one-forms  $\phi_e=\tilde\phi_e dz_e$ on the seams by
\begin{equation}\label{normLinf}
  |\uphi|_{\us}:=\max_e \sup_{z_e\in \gamma_e^{s_e}}\left|\frac{2\pi \phi_{e}}{d\ln z_e}\right|=\max_e \left(2\pi \sqrt{|s_e|}\cdot \sup_{z_e\in \gamma_e^{s_e}} |\tilde\phi_{e}|\right),
\end{equation}
and our goal is to bound $|\uh^{(l)}+i\ug^{(l)}|_\us$ for the terms recursively defined by~\eqref{rec}.

Since ${\bf K}^{re}(p,q)$ is a real-analytic function of $q$, there exists a constant $M_1$ such that
\begin{equation}\label{eq:KK}
 \left|{\bf K}_v^{re}(z_e,w_{e'})-{\bf K}^{re}_v(z_e,0)\right|<M_1\,|w_{e'}|
\end{equation}
holds for all $v$, all $e,e'\in E_v$, and any $z_e\in V_e,w_{e'}\in V_{e'}$.
Since ${\bf K}^{re}(p,q)$ depends real-analytically on the moduli coordinate $u$, the constant $M_1$ can be chosen to be the same for all $u$ in some neighborhood of $u=0$. As we emphasized above and proved in equation~\eqref{vanishing}, $\int_{p\in\gamma_e}(\underline \calK^{re}\uh)_e(p)=0$ for any $\uh$ such that $\int_{p\in\gamma_e}h_e(p)=0$ for any $e$. Similar analysis applies for $\underline \calK^{im}\ug$, and it thus follows from the recurrent definition~\eqref{rec} that $\int_{p\in\gamma_e}h_e^{(l)}(p)=\int_{p\in\gamma_e}g_e^{(l)}(p)=0$ for any $e$ and for all $l$. Multiplying~\eqref{eq:KK} by $h_{e'}^{(l)}$ and integrating over $\gamma_{e'}$, the contribution from
${\bf K}(z_e,0)$ vanishes, so that we obtain for any $z_e\in V_e$
\begin{equation}\label{est1}
  \left|\int_{w_e'\in\gamma_{e'}} {\bf  K}^{re}(z_e,w_{e'})h_{e'}^{(l)} (w_{e'})\right|< M_1 \sqrt{|s_e}|\, \sup_{w_{e'}\in\gamma_{e'}} \left|\frac{2\pi i h_{e'}^{(l)}}{d\ln w_{e'}}\right|\le M_1\sqrt{|\us|}|\uh^{(l)}|_\us
\end{equation}
By recalling the definition of $\underline\calK^{re}$, the bound above implies the same bound for it; the bound for $\underline\calK^{im}$ is obtained analogously. Adding these inequalities and applying them recursively in $l$, it follows that there exists a constant $M_2$ such that
\begin{equation}\label{est11}
  |\uh^{(l)}+i\ug^{(l)}|_{\us}<(M_2\sqrt{|\us|})^l|\uphi|_\us
\end{equation}
for any $l$.
Thus for $\sqrt{|\us|}<(2M_2)^{-1}$ the terms of the series~$\uh$ and $\ug$ are bounded by the geometric sequence with ratio less than $1/2$. Since the sum of such a geometric series is less than 2, for further use we record that we have proven the crucial inequality
\begin{equation}\label{eq:hg}
|\uh+i\ug|_\us<2 |\uphi|_\us,
\end{equation}
for any $|\us|<(2M_2)^{-2}$. To finish the proof of convergence we note that in particular the sum of the left-hand-sides of~\eqref{est11} for all $l$ converges, and thus the convergent series~\eqref{h-f} give a solution of the linear integral equation~\eqref{eq:phifinal}. Thus the sum of the corresponding Cauchy integrals $\widehat\xi+\widehat\chi$ gives the ARN solution of the original \RHP with initial data $\uphi$, finally proving proposition~\ref{prop:RH}.
\end{proof}

\medskip
Our main interest is the behavior of the ARN solution as $\us\to0$, and the setup is as follows. Let  $\uf=\lbrace f_e\rbrace=\lbrace \tilde f_e(z_e) dz_e\rbrace$ be a collection of holomorphic 1-forms, defined on the unit disks $V_e$. Then for any $|\us|<1$ we consider the~\RHP with initial data $\lbrace\phi_e\rbrace:=\left\lbrace \left.\left(f_e-I^*_ef_{-e}\right)\right|_{\gamma_e^{s_e}}\right\rbrace$. Note that by the residue theorem,  $\int_{\gamma_e} f_e=0$ for any $e$, so this $\phi_e$ can be used as initial data for the \RHP.  The fact that $\phi_e$ is a restriction to the seam of a holomorphic 1-form will allow us to explicitly compute the Cauchy principal value appearing in the Sokhotski-Plemelj formula, while we will crucially use the fact that our Cauchy kernels $K^{re}_v$ and $K^{im}_v$ are taken on $C^v$, and independent of $\us$.

We define the $L^\infty$ norm of $\uf$ by
\begin{equation}
|\uf|:=2\pi \max_e \sup_{|z_e|=1} \left| \frac{f_e}{d\ln z_e}\right|=2\pi \max_e \sup_{|z_e|=1} | \tilde f_e|.
\end{equation}
The Schwartz lemma on the disk $U_e=\lbrace |z_e|<\sqrt{|s_e|}\rbrace$  implies
\begin{equation}\label{Schwartz}
|\uf|_{\us}:=\left|\uf|_{\gamma_e^{s_e}}\right|\le  |\uf| \,(\sqrt{|\us|})^{\ord \uf},
\end{equation}
where we have denoted $\ord \uf:=\min_e (\ord_{q_e} f_e)$. We further denote, for any irreducible component $C^v$, $|\us_{\,v}|:=\max_{e\in E_v}|s_e|$. We finally denote the usual $L^2$ norm of a differential on $\widehat C_\us^v$ by
$$
||\omega||_{\widehat C_\us^v}^2:=\frac{i}{2}\int_{\widehat C^v_\us}\omega^v\wedge\overline\omega^v.
$$

Our main  bound is then the following
\begin{prop}[Bound for the ARN solution]\label{ARNPJP}
For a fixed $\uf$ and any sufficiently small $|\us|$, let $\omega_\us$ be the ARN solution of the~\RHP with initial data $\lbrace\phi_e\rbrace:=\left\lbrace \left.\left(f_e-I^*_ef_{-e}\right)\right|_{\gamma_e^{s_e}}\right\rbrace$. Then there exists a constant $M$ independent of sufficiently small $u$ and $\us$ such that for any irreducible component $C^v$ the following inequality holds:
\begin{equation}\label{L2-estim}
||\,\omega_\us||_{\widehat C^v_\us}\le M\,|\uf|\,\sqrt{|\us_v|}^{\,\,\ord \uf+1}.
\end{equation}
\end{prop}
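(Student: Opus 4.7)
The proof has three stages: bounding the jump data $\uphi$ produced from the holomorphic input $\uf$, propagating that bound through the fixed-point construction of Proposition~\ref{prop:RH}, and finally an $L^2$ estimate for the resulting Cauchy integrals on $\widehat{C}^v_\us$.

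First, the Schwarz lemma applied to $\tilde f_e$ on the unit disk yields $\sup_{|z_e|=\sqrt{|s_e|}}|\tilde f_e|\le(|\uf|/(2\pi))\sqrt{|s_e|}^{\,\ord\uf}$, and the same bound applies to $I_e^*f_{-e}$ since the involution $w\mapsto s_e/w$ preserves $|w|=\sqrt{|s_e|}$. Combined with definition~\eqref{normLinf}, this gives the seam-by-seam estimate $|\phi_e|_\us\le 2|\uf|\sqrt{|s_e|}^{\,\ord\uf+1}$, and hence $|\uphi|_\us\le 2|\uf|\sqrt{|\us|}^{\,\ord\uf+1}$. The contraction bound~\eqref{eq:hg} from Proposition~\ref{prop:RH} then yields $|\uh+i\ug|_\us<4|\uf|\sqrt{|\us|}^{\,\ord\uf+1}$.

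For the $L^2$ estimate, observe that $\widehat\xi^v$ and $\widehat\chi^v$ from~\eqref{int} and~\eqref{int1} depend only on $h_e,g_e$ for $e\in E_v$, which is what localizes the bound to $|\us_v|$. Split $\widehat{C}^v_\us$ into the bounded-area complement of $\bigcup_{e\in E_v}V_e$ and the annuli $V_e\cap\widehat{C}^v_\us$. On the complement, the kernels $K_v^{re}(p,q)$, $K_v^{im}(p,q)$ are smooth in $q\in\gamma_e$, so the Lipschitz estimate~\eqref{eq:KK} together with $\int_{\gamma_e}h_e=0$ yields the pointwise bound $|\omega_\us^v(p)|\le C\sum_{e\in E_v}\sqrt{|s_e|}\,|h_e|_\us$ (as in~\eqref{est1}), whose square integrates over finite area to the desired order. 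On each annulus, the Sokhotski--Plemelj decomposition~\eqref{eq:sokhot} isolates the holomorphic piece $(\calK^{re}\uh)_e$ (again handled by~\eqref{est1}) from the singular Cauchy integral $dz_e\int_{\gamma_e^{s_e}}h_e(w_e)/(2\pi i(z_e-w_e))$. Expanding this in the Laurent series $\sum_{m\ge 2}a_{-m}\sqrt{|s_e|}^{\,m}z_e^{-m}dz_e$ (where the $m=1$ term vanishes by $\int h_e=0$), bounding the Fourier coefficients of $\tilde h_e$ via Parseval, and integrating explicitly over $\sqrt{|s_e|}<|z_e|<1$ gives an $L^2$ bound of order $|h_e|_\us$.

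The main obstacle is passing from the global sup estimate~\eqref{eq:hg} on $|\uh+i\ug|_\us$ to the seam-localized bound $|h_e|_\us,|g_e|_\us\le C|\uf|\sqrt{|s_e|}^{\,\ord\uf+1}$ needed to obtain the sharp exponent $\sqrt{|\us_v|}^{\,\ord\uf+1}$ rather than the naive $\sqrt{|\us|}^{\,\ord\uf+1}$. This is addressed by iterating the series~\eqref{rec} component by component, exploiting that $(\underline\calK^{re}\uh)_e$ depends only on $h_{e'}$ for $e'\in E_{v(e)}\cup E_{v(-e)}$ and that~\eqref{eq:KK} contributes a factor $|w_{e'}|=\sqrt{|s_{e'}|}$ at each step. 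Summing the resulting convergent geometric series of seam-localized contributions preserves the $\sqrt{|s_e|}^{\,\ord\uf+1}$ decay at each $e\in E_v$, and combining with the $L^2$ analysis above yields~\eqref{L2-estim}.
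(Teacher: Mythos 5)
Your overall strategy is viable, but it is a genuinely different route from the paper's, and its last step has a real flaw. The paper proceeds in two moves: first a \emph{pointwise} bound for $\omega_\us$ on each seam, obtained by rewriting $\omega^v$ on $V_e$ as in~\eqref{xiLocal} and evaluating the singular Cauchy integral \emph{exactly} by the residue theorem (possible precisely because the jump data are boundary values of the holomorphic forms $\varphi_{\pm e}$), so that the dominant term is $-I_e^*(\varphi_{-e})$, controlled by Schwarz; second, the $L^2$ norm on $\widehat C^v_\us$ is converted by the Stokes/reciprocity identity~\eqref{stokes} into integrals over the seams, where the pointwise bound and the seam length $2\pi\sqrt{|s_e|}$ give the extra half-power. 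You instead integrate over the surface directly, splitting off the annuli $V_e\cap\widehat C^v_\us$ and computing the $L^2$ norm of the singular Cauchy transform there by Laurent expansion, orthogonality of the powers $z_e^{-m}$ on the annulus and Parseval on the seam; this correctly yields a contribution of order $|h_e+ig_e|_\us$ per seam, and together with your Stage 1 bound $|\uphi|_\us\le 2|\uf|\sqrt{|\us|}^{\ord\uf+1}$ and~\eqref{eq:hg} it does give~\eqref{L2-estim} with $|\us|$ in place of $|\us_v|$, with no need for the residue evaluation or the Stokes identity. A side effect is that the paper's route also produces the pointwise seam estimate (its lemma), which is reused later (e.g.\ in lemma~\ref{periodsofpsi}), whereas your route produces only the $L^2$ bound; for the proposition as stated that is enough.

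The gap is in your final paragraph. The claim that iterating~\eqref{rec} ``preserves the $\sqrt{|s_e|}^{\,\ord\uf+1}$ decay at each $e\in E_v$'' is false in general: at the first iteration the seam $e$ receives, through $(\underline\calK^{re}\uh^{(0)})_e$, contributions from all neighbouring seams $e'\in E_{v(e)}\cup E_{v(-e)}$, of size about $\sqrt{|s_e|}\cdot\sqrt{|s_{e'}|}\cdot|\phi_{e'}|_\us\sim\sqrt{|s_e|}\,\sqrt{|s_{e'}|}^{\,\ord\uf+2}|\uf|$. If some adjacent $|s_{e'}|$ is of the order of the global maximum while $|s_e|$ is much smaller (and $\ord\uf\ge1$), this is not $O(\sqrt{|s_e|}^{\,\ord\uf+1})$, so the per-seam exponent does not propagate; what the iteration actually preserves is only one factor $\sqrt{|s_e|}$ (from the seam-$e$ norm) times powers of the global $\sqrt{|\us|}$, each step gaining one such power. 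That weaker propagation still bounds the corrections by $\sqrt{|s_e|}\,\sqrt{|\us|}^{\,\ord\uf+2}|\uf|$, which suffices for the form of the estimate actually invoked later in the paper (all applications use the global $|\us_k|$), but it does not deliver the literal $|\us_v|$-localized statement by the mechanism you describe. To be fair, the paper's own proof is equally loose on this point --- its correction terms $\varphi^{(l)}$, $l\ge1$, are bounded in~\eqref{est11} only by global powers of $\sqrt{|\us|}$, and only the leading term $f_{-e}$ evaluated at radius $\sqrt{|s_e|}$ is genuinely localized --- so the honest conclusion of both arguments is the bound with the leading, localized term plus corrections carrying global factors; but since you singled this out as the ``main obstacle'' and proposed a specific fix, you should either prove the per-seam propagation correctly (it fails as stated) or state and prove the bound you can actually reach.
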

Since $C^v_{\underline 1}:=C^v\setminus \cup_{e\in E_v}V_e$ is a compact set on which $L^2$ and $L^1$ norms can be bounded in terms
of each other, we have the following
\begin{cor}\label{cor:zero}
For any fixed path $\gamma\subset C^v_{\underline 1}$, there exists a constant $M_\gamma$ such that the following inequality holds for all sufficiently small $u$:
$$
 \left|\int_\gamma\omega_\us\right|\le M_\gamma|\uf|\sqrt{|\us_v|}^{\ord\uf+1}.
$$
\end{cor}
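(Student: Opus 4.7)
The plan is to pass from the $L^2$ bound on $\omega_\us$ over $\widehat C^v_\us$ provided by Proposition~\ref{ARNPJP} to an estimate of $\int_\gamma\omega_\us$, using the fact that $\omega_\us$ is holomorphic on $\widehat C^v_\us$ and that $\gamma$ lies in a compact set staying a uniformly positive distance away from the seams. The crucial point is that $\gamma\subset C^v_{\underline 1}=C^v\setminus\bigcup_{e\in E_v}V_e$, which by construction has positive distance — bounded below independently of $\us$ — from every boundary circle $\gamma_e^{s_e}\subset U_e\subset V_e$, and which is contained in $\widehat C^v_\us$ where $\omega_\us$ is holomorphic.

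First I would cover $\gamma$ by finitely many coordinate charts on $C^v$, chosen small enough that around each one there is a concentric slightly larger disk, still contained in $\widehat C^v_\us$ and still disjoint from $\bigcup_{e\in E_v}V_e$, with a uniform inner/outer radius ratio independent of $\us$. Writing $\omega_\us=G\,d\zeta$ in such a chart, the holomorphic function $G$ satisfies the standard mean-value type estimate
\begin{equation*}
\sup_{\text{smaller disk}}|G|\;\le\;C\,\Big(\int_{\text{larger disk}}|G|^2\,\tfrac{i}{2}d\zeta\wedge d\bar\zeta\Big)^{1/2},
\end{equation*}
where $C$ depends only on the radii. Summing over the finite cover produces a uniform pointwise estimate
\begin{equation*}
\sup_{p\in\gamma}\Big|\tfrac{\omega_\us}{d\zeta}\Big|\;\le\;C_1\,\|\omega_\us\|_{\widehat C^v_\us}
\end{equation*}
with $C_1$ independent of $\us$. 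Integrating along the fixed-length arc $\gamma$ then yields $|\int_\gamma\omega_\us|\le C_2\,\|\omega_\us\|_{\widehat C^v_\us}$, and combining with~\eqref{L2-estim} gives the desired bound with $M_\gamma:=C_2\,M$.

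The only step requiring a brief additional remark is uniformity in the moduli coordinate $u$: since the local coordinates $z_e$, and hence the disks $V_e$, depend real-analytically on $u$, and $\gamma$ is fixed, for $u$ in a small neighborhood of $0$ the distance from $\gamma$ to $\bigcup_{e\in E_v}V_e$ stays bounded below, so the constants $C_1,C_2$ (and hence $M_\gamma$) can be chosen independent of $u$. No step is a serious obstacle; the argument is the standard conversion of an $L^2$ bound on a holomorphic differential into an integrated bound over a compact subset with positive distance to the boundary, combined with the uniform geometric setup furnished by plumbing.
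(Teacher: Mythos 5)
Your argument is correct and is essentially the paper's own: the paper simply invokes the comparability of norms for the holomorphic differential $\omega_\us$ on the compact set $C^v_{\underline 1}$, away from the shrinking disks, and combines it with the $L^2$ bound of Proposition~\ref{ARNPJP}, which is exactly the mean-value/covering argument you spell out. (Only note that the slightly larger disks need only stay inside $\widehat C^v_\us$, not avoid the $V_e$ entirely, since $\gamma$ may touch $\partial V_e$; this changes nothing.)
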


To prove the proposition, we first obtain a pointwise bound.
\begin{lm}
In the setup as above, there exists a constant $M_3$ independent of $u$ and $\us$, such that for any $z_e\in \gamma_e^{s_e}$, writing the ARN solution as $\omega^v_\us=\tilde \omega^v(z_e) dz_e $, the following inequality holds:
 \begin{equation}\label{25}
|\tilde \omega^v(z_e)|<M_3 |\uf|\sqrt{|\us_v|}^{\ord \uf}.
\end{equation}
\end{lm}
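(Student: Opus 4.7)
The plan of proof is to combine the Sokhotski--Plemelj boundary-value formulas \eqref{eq:sokhot} and \eqref{eq:sokhot1}: writing $\omega^v_\us = \widehat\xi^v + \widehat\chi^v$, for $z_e\in\gamma_e^{s_e}$ one has
\begin{equation*}
\tilde\omega^v(z_e)\,dz_e = \tfrac12(h_e+ig_e)(z_e) + (\calK^{re}\uh + \calK^{im}\ug)_e(z_e) + dz_e\fint_{\gamma_e^{s_e}}\frac{h_e(w)+ig_e(w)}{2\pi i(z_e-w)}
\end{equation*}
(up to the sign convention in the PV integrand). I would then bound each of the three summands pointwise by the target $C|\uf|(\sqrt{|\us_v|})^{\ord\uf}$.

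For the jump term $\tfrac12(h_e+ig_e)$, I would use the linear integral equation \eqref{phi-h-g} to write $h_e+ig_e = \phi_e - (\underline\calK^{re}\uh + \underline\calK^{im}\ug)_e$. The initial data $\phi_e = f_e - I_e^*f_{-e}$ admits a direct pointwise Schwartz estimate on $\gamma_e^{s_e}$: since $f_e$ is holomorphic on $V_e$ with $\ord_{q_e}f_e\ge\ord\uf$, and since the pullback satisfies $\tilde{(I_e^*f_{-e})}(z_e) = -\tilde f_{-e}(s_e/z_e)s_e/z_e^2$ with $|s_e/z_e|=\sqrt{|s_e|}\le\sqrt{|\us_v|}$ and $|s_e/z_e^2|=1$ on $\gamma_e^{s_e}$, one obtains $|\tilde\phi_e(z_e)|\le C|\uf|(\sqrt{|\us_v|})^{\ord\uf}$. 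The $\calK$-corrections are controlled pointwise by the Taylor estimate \eqref{eq:KK} together with the vanishing of periods $\int_{\gamma_{e'}}h_{e'}=\int_{\gamma_{e'}}g_{e'}=0$, giving $|\tilde{(\underline\calK^{re}\uh)_e}(z_e)|\le C\sqrt{|\us_v|}|\uh|_\us$ and analogously for $\underline\calK^{im}\ug$; combined with \eqref{eq:hg} and the sharpened Schwartz bound $|\uphi|_\us\lesssim|\uf|(\sqrt{|\us|})^{\ord\uf+1}$, this is strictly of lower order than the target for sufficiently small $|\us|$.

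The second summand $(\calK^{re}\uh+\calK^{im}\ug)_e(z_e)$ obeys the same pointwise Taylor estimate and is therefore also strictly subleading. The main obstacle is the Cauchy principal value term, which I plan to evaluate explicitly using the structure of the integrand. Substituting $h_e+ig_e = \phi_e - (\underline\calK^{re}\uh + \underline\calK^{im}\ug)_e$, the PV splits into contributions from $\tilde f_e$, from $\tilde{(I_e^*f_{-e})}$, and from the $\calK$-corrections. Each of $\tilde f_e$ and $(\calK^{re}\uh)_e,(\calK^{im}\ug)_e$ extends holomorphically to all of $V_e$, so by the Cauchy integral formula applied on both sides of $\gamma_e^{s_e}$ its PV equals $\pm\tfrac12$ the boundary value, bounded pointwise by the same Schwartz estimates. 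For $\tilde F_{-e}(w):=-\tilde f_{-e}(s_e/w)s_e/w^2$, the Laurent expansion at $w=0$ contains only negative powers of $w$ (converging on $|w|>|s_e|$); a direct term-by-term integration shows that the Cauchy transform of $\tilde F_{-e}|_{\gamma_e^{s_e}}$ vanishes inside $U_e$ and reproduces $\pm\tilde F_{-e}(z_e)$ outside, so the PV equals $\mp\tfrac12\tilde F_{-e}(z_e)$, itself bounded by $C|\uf|(\sqrt{|\us_v|})^{\ord\uf}$ via Schwartz. Assembling the three contributions yields \eqref{25} with a constant $M_3$ independent of $u$ and of sufficiently small $\us$; the main technical care is in separating the pointwise bounds in $\sqrt{|\us_v|}$ from the norm bounds in $\sqrt{|\us|}$ and in handling the Laurent argument for $\tilde F_{-e}$, which cannot be replaced by a naive Cauchy integral formula.
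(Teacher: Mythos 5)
Your argument is correct and rests on the same key mechanism as the paper's proof: the jump density $h_e+ig_e$ is the difference of a form holomorphic on $V_e$ and the pullback under $I_e$ of a form holomorphic on $V_{-e}$, so the singular Cauchy integral can be evaluated exactly (rather than estimated naively, which would lose a factor $\sqrt{|s_e|}^{-1}$), while the $\calK$-corrections are subleading by \eqref{eq:KK}, the vanishing of periods, and \eqref{eq:hg}. The packaging differs slightly: the paper resums the recursion into the holomorphic forms $\varphi_e=f_e-(\calK^{re}\uh+\calK^{im}\ug)_e$ (proving convergence and holomorphicity of the series \eqref{holom}), writes $h_e+ig_e=\varphi_e-I_e^*(\varphi_{-e})$, evaluates the whole singular integral off the seam by the residue theorem as $-I_e^*(\varphi_{-e})(z_e)$ and takes the continuous extension to $\gamma_e$; you stay on the seam with Sokhotski--Plemelj, split the density via the integral equation \eqref{phi-h-g}, and compute the principal value of each structural piece by the Cauchy formula or term-by-term Laurent integration. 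Note that your decomposition is in fact the same as the paper's, since $\uh+i\ug=\uphi-(\underline\calK^{re}\uh+\underline\calK^{im}\ug)$ regroups exactly into $\varphi_e-I_e^*(\varphi_{-e})$; what you gain is not having to argue holomorphicity of the resummed $\varphi_e$, at the cost of having to control the corrections inside the principal value as well. On that last point there is one bookkeeping omission: the correction $(\underline\calK^{re}\uh+\underline\calK^{im}\ug)_e$ contains, besides the terms $(\calK^{re}\uh)_e,(\calK^{im}\ug)_e$ holomorphic on $V_e$ that you treat, also the pulled-back halves $I_e^*\bigl((\calK^{re}\uh)_{-e}\bigr),I_e^*\bigl((\calK^{im}\ug)_{-e}\bigr)$; their contribution to the principal value must be handled too, but this is done by exactly the negative-power Laurent argument you set up for $\tilde F_{-e}$, with the resulting half-boundary-value bounded by the same operator estimates, so it remains subleading and the proof goes through.
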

\begin{proof}
To obtain this bound, we  return to the details of the construction of the ARN solution; using the fact that the initial data is the restriction of a holomorphic 1-form will eventually allow us to evaluate the singular integral by the residue theorem. Writing the ARN solution for any $z_e\in V_e$ as the sum of~\eqref{int_loc} and the similar formula for $\chi$ gives
$$
\omega^v(z_e)=(\calK^{re}\uh)_e(z_e)+(\calK^{im}\ug)_e(z_e)+\frac{dz_e}{2\pi i}\int_{w_e\in\gamma_e}
\frac{h_e(w_e)+ig_e(w_e)}{z_e-w_e}.
$$
The first two summands are holomorphic functions of $z_e\in V_e$ (see the discussion after formula~\eqref{calK}). For the singular integral, recall that $\uh$ and $\ug$ are given by the sums of the series~\eqref{h-f}, with the terms defined recursively by
$$
\begin{aligned}
\Big(\uh^{(l)}&+i\ug^{(l)}\Big)_e=\left(\underline\calK^{re}\uh^{(l-1)} +\underline\calK^{im}\ug^{(l-1)}\right)_e\\
&=\left(\calK^{re}\uh^{(l-1)}+\calK^{im}\ug^{(l-1)}\right)_e-
I^*_e\left(\left(\calK^{re}\uh^{(l-1)}+\calK^{im}\ug^{(l-1)}\right)_{-e}\right),
\end{aligned}
$$
where  we have recalled the definition of the operator $\underline\calK^{re}$ in formula~\eqref{ucalK} (and $\underline\calK^{im}$ is similar). In this recursive definition we clearly see a holomorphic form on $V_e$, and a pullback of a holomorphic form on $V_{-e}$ under $I_e^*$. To make use of this, we define the series  $\varphi_e:=\sum_{l=0} (-1)^l \varphi_e^{(l)}$ with the first term $\varphi_e^{(0)}=f_e$, and the further terms simply being the holomorphic forms appearing in the recursion above:
\begin{equation}\label{holom}
\begin{aligned}
\varphi_e^{(l)}:&=(\calK^{re}\uh^{(l-1)}+\calK^{im}\ug^{(l-1)})\\&
=dz_e\sum_{e'\in E_{v}}\int_{w_{e'}\in \gamma_{e'}} \left({\bf K}_v^{re}(z_e,w_{e'})
h_{e'}^{(l-1)}(w_{e'})+{\bf K}_v^{im}(z_e,w_{e'})
g_{e'}^{(l-1)}(w_{e'})\right).
\end{aligned}
\end{equation}
Now $\varphi_e^{(0)}$ is a holomorphic form on $V_e$, and so is every $\varphi_e^{(l)}$; moreover, equation~\eqref{est11} gives a bound on $|\varphi_e^{(l)}|$ for $l\ge 1$, which shows that for $|\us|$ sufficiently small the series defining $\varphi_e$ converge uniformly --- and thus their sum $\varphi_e$ is also a holomorphic 1-form on $V_e$. Moreover, the bound~\eqref{est11} combined with the Schwarz inequality~\eqref{Schwartz} implies
\begin{equation}\label{eq:fF}
|\underline{\varphi}|_\us<2|\uf|_\us\le 2|\uf|\sqrt{|\us_v|}^{\ord\uf}.
\end{equation}
We finally rewrite the ARN solution as
\begin{equation}\label{xiLocal}
\omega^v(z_e)=(\calK^{re}\uh)_e(z_e)+(\calK^{im}\ug)_e(z_e)+\frac {dz_e}{2\pi i}\int_{|w_e|=\sqrt{|s_e|}}\frac {\varphi_e(w_e)-I_e^*(\varphi_{-e})(w_e)}{z_e-w_e},
\end{equation}
and now estimate each term in this formula. Since $\varphi_e$ is holomophic, by the residue theorem we compute for any $|z_e|> \sqrt{|s_e|}$ the singular integral to be
\begin{equation}\label{xiLocal1}
\frac {dz_e}{2\pi i}\int_{|w_e|=\sqrt{|s_e|}}\frac {\varphi_e(w_e)-(I_e^*(\varphi_{-e}))(w_e)}{z_e-w_e}=-I^*_e(\varphi_{-e}) (z_e)
\end{equation}
Indeed since each $\varphi_e$ is holomorphic for $w_e\in V_e$, the first term in~\eqref{xiLocal} has no residues in the disk $|w_e|<\sqrt {|s_e|}$. The second term is equal to (recall that $I_e$ is orientation-reversing)
$$
\frac {dz_e}{2\pi i}\int_{|w_{-e}|=\sqrt{|s_e|}}\frac {\varphi_{-e}(w_{-e})}{z_e-s_ew_{-e}^{-1}}=
-\frac {dz_e}{2\pi i z_e}\int_{|w_{-e}|=\sqrt{|s_e|}}\frac {w_{-e}\varphi_{-e}(w_{-e})}{s_ez_e^{-1}-w_{-e}}
$$
\begin{equation}\label{xiLocal2}
=\frac {s_edz_e}{z_e^2}\tilde\varphi_{-e}(s_e z_e^{-1})=-I^*_e(\varphi_{-e}).
\end{equation}
Equation~\eqref{xiLocal2} extends continuously to $|z_e|=\sqrt{|s_e|}$. Thus equation~\eqref{eq:fF} implies that the singular integral in~\eqref{xiLocal} is bounded by $2|\uf|\sqrt{|\us_v|}^{\ord\uf}$, which is the order of the bound that we want.

On the other hand, we have  the bound~\eqref{est1} for the norm of  $\calK^{re}$, and an analogous bound for the norm of $\calK^{im}$. Then combining the bound ~\eqref{eq:hg} with the Schwartz inequality gives
$$
|\calK^{re}\, \uh+\calK^{im}\,\ug|_\us\le M'\sqrt{|\us_v|}^{\ord\uf +1},
$$
which is of smaller order than the bound for the singular integral. We have thus obtained bounds for both summands giving $\omega^v$ in formula~\eqref{xiLocal}, and the lemma is proven.
\end{proof}
Now we are ready to prove the bound for the $L^2$ norm of $\omega$.
\begin{proof}[Proof of proposition~\ref{ARNPJP}]
As in the proof of uniqueness of the ARN differential, we use the Stokes' theorem expression~\eqref{stokes} for the $L^2$ norm of $\omega^v$. For a given $e$, we thus need to compute
\begin{equation}\label{eq:eq}
 \int_{z_e\in\gamma_e}F^{v(e)}(z_e)\,dF^{v(e)*}(z_e)=\int_{\gamma_e}F^{v(e)}(z_e)\cdot
 \left(\omega^v(z_e)+\bar\omega^v(z_e)\right).
\end{equation}
Recall that since $\int_{z_e\in\gamma_e}\omega^{v(e)}(z_e)=0$, it follows that this integral does not depend on the choice of the constant of integration for the definition of $F^{v(e)}$, so we can pick any point  $z_e^0\in\gamma_e$, and replace $F^{v(e)}(z_e)$ on the right-hand-side of~\eqref{eq:eq}  by $F^{v(e)}(z_e)-F^{v(e)}(z_e^0)$.
From the lemma we have the pointwise bound $|\overline{\tilde\omega}^v|\le M_3|\uf|\sqrt{|\us|}^{\ord\uf}$ for some constant $M_3$ independent of $u$ and $\us$. As the length of the arc from $z_e^0$ to $z_e$ is at most $2\pi\sqrt{|s_e|}$, it follows that by integrating $\overline{\tilde\omega}$ that $F^{v(e)}(z_e)-F^{v(e)}(z_e^0)\le 2\pi M_3|\uf|\sqrt{|\us|}^{\ord\uf+1}$. We thus obtain
$$
 \int_{z_e\in\gamma_e}F^{v(e)}(z_e)\,dF^{v(e)*}(z_e)\le 2\pi M_3|\uf|\sqrt{|\us|}^{\ord\uf+1}\int_{\gamma_e}|2\omega^v(z_e)|.
$$
The lemma again gives a pointwise bound for the integrand, while integrating over the seam
introduces another factor of $2\pi\sqrt{|s_e|}$, so that summing over all $e$ we finally obtain
\begin{equation}\label{29}
||\omega^v||^2\le 4\pi^2M_3^2|\uf| \sqrt{\max_{e\in E_v}|\us_e|}^{\,\,2\ord f+2}\cdot \# E_v.
\end{equation}
Thus finally there exists a constant $M$ such that inequality~\eqref{L2-estim} holds.
\end{proof}

\section{The RN differential in plumbing coordinates}\label{sec:RNplumbing}
In this section we construct explicitly the RN differential on any smooth jet curve $X_{w,u,\us}$ in plumbing coordinates. The construction starts with a collection of RN differentials on $C^v$ with prescribed singular parts at $p_\ell$ and with residues at the nodes given by a solution of the {\em flow} Kirchhoff problem. Note that this differential is not strictly speaking real-normalized: already for the case of an irreducible nodal curve and a differential with no residues, when there are no residues in the limit either, the period over a loop passing through a node is equal to the integral of the RN integral on the normalization from one preimage of the node to the other, which may not be real.

We then use the ARN solution of the \RHP with the initial data matching the jumps of this collection of differentials on $C^v$ to construct a differential on $C_{u,\us}$. Since the ARN solutions have zero periods, the resulting differential will not be real-normalized. We thus proceed recurrently, by using RN differentials with residues solving the {\em force} Kirchhoff problem, with the electromotive force being equal to the imaginary part of the periods of the differential constructed at the previous step. The bound of the ARN solution of the \RHP from proposition~\ref{ARNPJP} is used crucially to show that this construction converges.

We continue to use the notation for plumbed surfaces and the \RHP as in the previous section.
\begin{notat}
For a fixed smooth jet curve $X_{w,u,\us}$, given any collection of real numbers $\uc=\lbrace c_e\rbrace_{e\in E(\Gamma)}$ satisfying conditions (0) and (1) of the Kirchhoff problem, we denote $\Phi(\uc)=\lbrace \Phi^{v}(\uc)\rbrace_{v\in V(\Gamma)}$ the collection of RN differentials on $C^v_u$ with prescribed singular parts $\sigma_\ell$ and prescribed residues $r_\ell$ (encoded by the coordinate $w$) at the marked points $p_\ell$,  with residue $ic_e$ at the preimage $q_e$ of every node, and holomorphic elsewhere.
\end{notat}
\begin{lm}\label{lm:psiphi}
For any $\uc$ satisfying conditions (0) and (1) of the Kirchhoff problem, let $f_e(\uc):=\left.\Phi^{v(e)}(\uc)\right|_{V_e}-ic_e dz_e/z_e$ be the collection of holomorphic differentials on $V_e$. Let $\omega(\uc)=\lbrace \omega^v(\uc)\rbrace$ be the ARN solution on $\widehat C_{u,\us}$ to the \RHP with initial data $\left.\left(f_e(\uc)-I_e^*f_{-e}(\uc)\right)\right|_{\gamma_e^{s_e}}$. Then the collection of differentials
\begin{equation}\label{psiR}
  \Psi^v(\uc):=\Phi^{v}(\uc)-\omega^{v}(\uc)
\end{equation}
is the unique meromorphic differential on $C_{u,\us}$ with singularities $\sigma_\ell$ and residues $r_\ell$ at $p_\ell$ prescribed by the coordinate $w$, and holomorphic elsewhere, such that $\int_{\gamma_e}\Psi(\uc)=2\pi c_e$ for any $e$, and $\int_\gamma\Psi(\uc)\in\RR$ for any cycle $\gamma\in H_1(\widehat C_{u,\us},\ZZ)$.
\end{lm}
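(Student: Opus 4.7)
The plan is to exhibit $\Psi(\uc)$ as a gluing of $\Phi(\uc)-\omega(\uc)$ across the seams and then check that the resulting differential on $C_{u,\us}$ has the claimed properties; uniqueness will then follow from the standard RN-type energy argument already used for proposition~\ref{prop:RH}.

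First I would verify that the ARN problem of proposition~\ref{prop:RH} applies. Near $q_e$ the restriction of $\Phi^{v(e)}(\uc)$ has the form $ic_edz_e/z_e+f_e(\uc)$, where $f_e(\uc)$ is holomorphic on $V_e$ by construction. A direct computation using $I_e(z_e)=s_e/z_e$ gives $I_e^*(dz_{-e}/z_{-e})=-dz_e/z_e$, and combining this with condition~(0) of the Kirchhoff problem, $c_{-e}=-c_e$, the $ic_edz_e/z_e$ polar parts cancel in the jump
\[
\Phi^{v(e)}(\uc)|_{\gamma_e}-I_e^*\bigl(\Phi^{v(-e)}(\uc)|_{\gamma_{-e}}\bigr)=\bigl(f_e(\uc)-I_e^*f_{-e}(\uc)\bigr)\big|_{\gamma_e^{s_e}}.
\]
This is precisely the initial data $\uphi$ of the \RHP driving $\omega(\uc)$. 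One checks that this $\uphi$ satisfies the hypotheses of definition~\ref{prob:rhp}: the identity $\phi_e=-I_e^*\phi_{-e}$ follows from $I_{-e}=I_e^{-1}$, and $\int_{\gamma_e}\phi_e=0$ because each $f_e(\uc)$ is holomorphic on $V_e$, so both $\int_{\gamma_e}f_e$ and $\int_{\gamma_e}I_e^*f_{-e}=-\int_{\gamma_{-e}}f_{-e}$ vanish.

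Next, since $\omega^v(\uc)$ solves the \RHP with this initial data, the jumps of $\Psi^v(\uc)=\Phi^v(\uc)-\omega^v(\uc)$ across every seam vanish, so the collection glues to a well-defined meromorphic differential on $C_{u,\us}$. Its only singularities come from $\Phi^v(\uc)$: the simple poles at the nodal preimages $q_e$ sit inside the excised disks $U_e$ and hence are no longer part of $C_{u,\us}$, while at each $p_\ell$ the differential $\omega^v(\uc)$ is holomorphic, so $\Psi(\uc)$ has exactly the prescribed singular parts $\sigma_\ell$ and residues $r_\ell$, and is holomorphic elsewhere on $C_{u,\us}$. For the seam periods, the ARN condition gives $\int_{\gamma_e}\omega^v(\uc)=0$, while the residue theorem applied to $\Phi^{v(e)}(\uc)$ on $U_e$ (whose boundary $\gamma_e$ is negatively oriented) yields $\int_{\gamma_e}\Phi^{v(e)}(\uc)=-2\pi i\cdot ic_e=2\pi c_e$, so $\int_{\gamma_e}\Psi(\uc)=2\pi c_e$ as claimed. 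Reality of the remaining periods is a summand-by-summand check on $\widehat C^v$: cycles in $H_1(\widehat C^v,\ZZ)$ decompose into cycles of $C^v$ (on which $\int\Phi^v\in\RR$ by the RN property of $\Phi^v$) and the boundary cycles $\gamma_e$ (for which $\int_{\gamma_e}\Phi^v=2\pi c_e\in\RR$); meanwhile $\int_\gamma\omega^v\in\RR$ on each $\widehat C^v$ by proposition~\ref{prop:RH}.

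Finally, for uniqueness, suppose $\Psi_1,\Psi_2$ both satisfy the stated properties; then $\eta:=\Psi_1-\Psi_2$ is a holomorphic differential on the closed Riemann surface $C_{u,\us}$, with all periods real (including the seam periods, which are equal for the two differentials). Since all periods are real, $F:=\Im\int\eta$ is a single-valued harmonic function on a compact Riemann surface, hence constant; thus $\eta=dF^*+idF=dF^*$ is exact, and being simultaneously exact and holomorphic on a closed surface forces $\eta\equiv 0$. (Equivalently, one can run the Stokes' theorem computation~\eqref{stokes} on each $\widehat C^v_\us$ and sum; the seam contributions cancel in pairs because the constants of integration drop out against $\int_{\gamma_e}dF^*=0$, exactly as in the ARN uniqueness argument for proposition~\ref{prop:RH}.)

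The main obstacle is the bookkeeping in step one: one must track the orientation-reversing nature of $I_e$ and the sign convention $c_{-e}=-c_e$ carefully enough to see that the polar parts of $\Phi^{v(e)}(\uc)$ and $I_e^*\Phi^{v(-e)}(\uc)$ cancel on the seam, leaving only the holomorphic mismatch $f_e-I_e^*f_{-e}$ as jump data. Once that identity is in hand, every other verification reduces to either an application of proposition~\ref{prop:RH} or a direct residue computation.
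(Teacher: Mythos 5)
Your existence argument is essentially identical to the paper's: cancellation of the polar parts $ic_edz_e/z_e$ under $I_e^*$ using $c_{-e}=-c_e$, so that the jump of $\Phi(\uc)$ equals the prescribed initial data; gluing because $\omega(\uc)$ has the same jumps; the seam period $2\pi c_e$ via the residue theorem and $\int_{\gamma_e}\omega(\uc)=0$; and reality of the remaining periods from real-normalization of $\Phi^v(\uc)$ plus reality of the periods of the ARN solution. All of that is correct and matches the paper.

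Your primary uniqueness argument, however, has a gap. The hypothesis only gives reality of $\int_\gamma\Psi(\uc)$ for $\gamma\in H_1(\widehat C_{u,\us},\ZZ)$, i.e.\ for cycles of $C_{u,\us}$ that do not cross the seams; together with the seam periods these do \emph{not} generate $H_1(C_{u,\us},\ZZ)$ (the cycles dual to the seams, which pass through the plumbing annuli, are missing), so you cannot conclude that the difference $\eta=\Psi_1-\Psi_2$ has \emph{all} periods real on the closed surface, and the step ``$F=\Im\int\eta$ is single-valued, hence constant'' is unjustified. Indeed, neither $\Psi_1$ nor $\Psi_2$ need have real periods over seam-crossing cycles (cf.\ remark~\ref{rem:LRNnotRN} and lemma~\ref{periodsofpsi}), and there is no a priori reason their difference does. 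Your parenthetical alternative is the correct repair and is in fact the paper's argument: $\eta$ is a holomorphic differential on $C_{u,\us}$ with zero seam periods and real periods over cycles disjoint from the seams, i.e.\ it is the ARN solution of the jump problem with zero jumps, hence vanishes by the uniqueness part of proposition~\ref{prop:RH} (equivalently, by running the Stokes computation~\eqref{stokes} on each $\widehat C^v_\us$ and summing, where the seam contributions are constants times $\int_{\gamma_e}dF^*=0$). So the two arguments you offer are not equivalent: only the second one is valid, and it should be promoted to the main line.
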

Again, recall that the last condition is equivalent to $\int_\gamma\Psi(\uc)\in\RR$ for any $\gamma\in H_1(C_{u,\us},\ZZ)$ not intersecting the seams. As in the previous section, we will drop $w$ and $u$ in the notation from now on.
\begin{proof}
Since $I_e^*(ic_{-e}dz_{-e}/z_{-e})=-ic_{-e}dz_e/z_e=ic_edz_e/z_e$, the jumps of $\omega(\uc)$ on the seams are equal to those of $\Phi(\uc)$, and thus by construction the differential $\Psi(\uc)$ on $\widehat C_\us$ has no jumps. Thus $\Psi(\uc)$ defines a meromorphic differential on $C_\us$ with prescribed singularities at $p_\ell$, and holomorphic elsewhere (the simple pole of $\Phi^{v(e)}(\uc)$ at $q_e$ is cut out by plumbing). Since $\int_{\gamma_e}\omega^{v(e)}(\uc)=0$ for any $e$, by definition of the ARN solution, the residue theorem on $U_e$ yields
$$\int_{\gamma_e}\Psi(\uc)=-2\pi i\Res_{q_e}\Phi^v(\uc)=2\pi c_e$$
(for the sign, recall that $\gamma_e$ is oriented as the boundary of $\widehat C^{v(e)}$).
Since $\Phi^v(\uc)$ is real-normalized on $C^v$, while the period of $\omega^{v}(\uc)$ over any cycle on $\widehat C^v_\us$ is real, it follows that the integral of $\Psi(\uc)$ over any cycle on $\widehat C_\us$ is also real.

The uniqueness of $\Psi(\uc)$ follows from the uniqueness of the ARN solution of the \RHP. Indeed, if $\Psi'$ is another such differential, then $\Psi(\uc)-\Psi'$ is a holomorphic differential on $C_{\us}$ with zero periods over the seams, and with real periods over all cycles not intersecting the seams. Thus it is the ARN solution of the \RHP with zero initial data, which must then be identically zero.
\end{proof}
\begin{notat}\label{df:Omega}
The construction above can be applied for the case when there are no prescribed singularities. In this case suppose $\uc'=\lbrace c_e'\rbrace$ satisfy conditions (0) and (1) of the Kirchhoff problem with no inflow, i.e.~when all $f_\ell$ are equal to zero. We denote $\Omega(\uc')$ the differential constructed on $C_{\us}$ by the above procedure. Then $\Omega(\uc')$ is the unique {\em holomorphic} differential on $C_\us$ such that $\int_{\gamma_e}\Omega(\uc')=2\pi c_e'$ for any $e$ and $\int_\gamma\Omega(\uc')\in\RR$ for any  $\gamma\in H_1(\widehat C_\us,\ZZ)$.
\end{notat}
From uniqueness it  follows that
\begin{equation}\label{eq:OmegaPsi}
\Psi(\uc+\uc')=\Psi(\uc)+\Omega(\uc')
\end{equation}
for any such $\uc$ and $\uc'$, since the right-hand-side is a differential on $C_\us$ satisfying the same conditions as the left-hand-side. This relation will be used in the next section.

\smallskip
Consider  the RN differential $\Psi_X=\Psi_{w,u,\us}$ on $X_{w,u,\us}$: it has prescribed singularities at $p_\ell$, is holomorphic elsewhere, and its periods over all cycles are real. Thus by uniqueness of the latter, $\Psi_{w,u,\us}$ is equal to $\Psi(\uc)$, for $c_e:=(2\pi)^{-1}\int_{\gamma_e}\Psi_{w,u,\us}$. Thus constructing the RN differential $\Psi_X$ is equivalent to determining $\uc$ (depending on $w,u,\us$) such that $\Psi_X=\Psi(\uc)$. We will construct $\uc$ recursively, starting with the solution of the {\em flow} Kirchhoff problem, and then recursively solving the {\em force} Kirchhoff problem, with the force being the imaginary parts of the periods of the previous term in the series. The difficulty is that the periods over the paths passing through the nodes may diverge. We first show that they have well-controlled logarithmic divergences.
\begin{notat}
For any closed path $\gamma$ on $C$, let $\widehat\gamma_\us$ be the collection of paths that are the intersections $\gamma\cap\widehat C_\us^v$ (conveniently, in our plumbing setup $\widehat C^v_\us$ is a subset of $C^v$ for any $\us$). These $\widehat\gamma_\us$ do not form a closed path on $C_\us$, as the point $\gamma\cap\gamma_e$ may not be the preimage of  $\gamma\cap\gamma_{-e}$ under the identification $I_e$.
We choose a starting point on $\gamma$ arbitrarily, and then denote $\gamma_{e_1},\dots,\gamma_{e_N}$ (possibly with repetitions) the set of seams that $\gamma$ intersects (oriented so that $\gamma$ crosses from $\gamma_{e_j}$ to $\gamma_{-e_j}$), and denote by $C^{v_j}$ the component of $C$ that $\gamma$ lies on after crossing $\gamma_{e_j}$. Let then $\widehat\gamma_\us^j$ be the segment of $\gamma$ contained in $C^{v_j}$ going from $\gamma_{e_{j-1}}$ to $\gamma_{e_j}$. Let $\delta_j$ be an  arc of  $\gamma_{e_j}$ connecting the point $\gamma\cap\gamma_{e_j}$ to $I_{e_j}^{-1}(\gamma\cap\gamma_{-e_j})$. Finally let $\gamma_\us$ be the closed path on $C_\us$ obtained by traversing $\lbrace \widehat\gamma_\us^1,\delta_1,\dots, \widehat\gamma_\us^N,\delta_N\rbrace$ in this order.
\end{notat}
\begin{lm}\label{periodsofpsi}
For any closed path $\gamma$ on $C$ and any $\uc$ satisfying conditions (0) and (1) of the Kirchhoff problem, there exist constants $\Pi_\gamma(\uc)$ (independent of $\us$, but depending on $w,u$), and $M_5(\uc)$ (independent of $w,u,\us$)  such that
\begin{equation}\label{intgamma}
\left|\,\Im \int_{\gamma_\us} \Psi(\uc)-\sum_{j=1}^Nc_{e_j}\ln |s_{e_j}|-\Pi_\gamma(\uc)\right|\le M_ 5(\uc)\sqrt{|\us|}.
\end{equation}
for any $w,u,\us$ sufficiently small.

Furthermore, the constant $\Pi_\gamma(\uc)$ only depends on the class of $\gamma$ in the cohomology of the dual graph $\Gamma$ of $C$.
\end{lm}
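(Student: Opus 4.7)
The plan is to split $\Psi(\uc)=\Phi(\uc)-\omega(\uc)$ and analyze the contributions to $\Im\int_{\gamma_\us}\Psi(\uc)$ from the interior segments $\widehat\gamma_\us^j$ and the seam arcs $\delta_j$ separately. The logarithmic divergences will come entirely from integrating the simple-pole parts of $\Phi^{v_j}(\uc)$ near each node; every other contribution will be shown to be $O(\sqrt{|\us|})$.

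Near each node $q_e$ I would fix a local coordinate in which $\Phi^{v(e)}(\uc)=ic_e\,dz_e/z_e+G^{v(e)}_e(z_e)\,dz_e$ with $G^{v(e)}_e$ holomorphic, so a local antiderivative satisfies $F^{v(e)}(\uc)(p)=ic_e\ln z_e(p)+\tilde G^{v(e)}_e(z_e(p))$ near $q_e$. At a point $p\in\gamma_e^{s_e}$ we have $|z_e(p)|=\sqrt{|s_e|}$, hence
\[
\Im F^{v(e)}(\uc)(p)=\tfrac{c_e}{2}\ln|s_e|+\Im\tilde G^{v(e)}_e(q_e)+O(\sqrt{|s_e|}),
\]
the error coming from the Lipschitz continuity of the holomorphic remainder. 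Evaluating at the two endpoints of $\widehat\gamma_\us^j$, which lie on the boundary circles of the two nodes adjacent to $C^{v_j}$, and using $c_{-e}=-c_e$ and $|s_e|=|s_{-e}|$, gives
\[
\Im\int_{\widehat\gamma_\us^j}\Phi^{v_j}(\uc)=\tfrac{c_{e_j}}{2}\ln|s_{e_j}|+\tfrac{c_{e_{j-1}}}{2}\ln|s_{e_{j-1}}|+\tilde\Pi^j(\uc)+O(\sqrt{|\us|}),
\]
where $\tilde\Pi^j(\uc)$ is a finite quantity independent of $\us$. Summing cyclically in $j$, each $\ln|s_{e_j}|$ appears twice, yielding the total $\sum_j c_{e_j}\ln|s_{e_j}|$, and I define $\Pi_\gamma(\uc):=\sum_j\tilde\Pi^j(\uc)$.

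On each arc $\delta_j\subset\gamma_{e_j}$ the modulus $|z_{e_j}|$ is constant, so the polar part $ic_{e_j}\,dz_{e_j}/z_{e_j}$ integrates to the purely real quantity $-c_{e_j}\,\Delta\arg z_{e_j}$ and contributes nothing to the imaginary part; the regular remainder of $\Phi^{v(e_j)}(\uc)$ is uniformly bounded while the arc length is $O(\sqrt{|\us|})$, so the remaining arc contribution is $O(\sqrt{|\us|})$. For the $\omega(\uc)$ piece, the \RHP initial data consists of the holomorphic forms $f_e(\uc)=\Phi^{v(e)}(\uc)|_{V_e}-ic_e\,dz_e/z_e$, so $\ord\uf(\uc)\ge 0$ and $|\uf(\uc)|$ is bounded uniformly in $\us$. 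Corollary~\ref{cor:zero} bounds the $\omega$-integral along the fixed compact portion of $\widehat\gamma_\us^j$ inside $C^{v_j}_{\underline 1}$ by $O(\sqrt{|\us_{v_j}|})$, and the pointwise bound~\eqref{25} combined with path-length bounds handles both the annular tails in $V_e\setminus U_e$ and the arcs $\delta_j$. Collecting everything yields inequality~\eqref{intgamma} with a constant $M_5(\uc)$ depending on $\uc, u, w$ but independent of $\us$.

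For the cohomology invariance, I would argue that for each fixed $v$ the collection of segments $\{\widehat\gamma_\us^j:v_j=v\}$ extends as $\us\to 0$ to a 1-chain $\Sigma_v(\gamma)$ on $C^v$ whose boundary $\sum_{e\in E_v}(n_e-n_{-e})[q_e]$ is determined by the net crossing numbers $n_e-n_{-e}$ of $\gamma$ on each oriented edge, and that $\sum_{j:v_j=v}\tilde\Pi^j(\uc)$ equals the imaginary part of a regularized integral of $\Phi^v(\uc)$ along $\Sigma_v(\gamma)$. Since $\Gamma$ has no 2-cells, $H_1(\Gamma,\ZZ)=Z_1(\Gamma,\ZZ)$, so two closed paths with $[\gamma]=[\gamma']$ induce equal graph 1-chains, making $\Sigma_v(\gamma)-\Sigma_v(\gamma')$ a cycle on $C^v$; as $\Phi^v(\uc)$ is real-normalized, its integral over any cycle on $C^v$ is real, which does not affect the imaginary part, so $\Pi_\gamma(\uc)=\Pi_{\gamma'}(\uc)$. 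I expect the main technical subtlety to be the $\omega(\uc)$ estimate on the annular regions $V_e\setminus U_e$, where Corollary~\ref{cor:zero} does not directly apply and one must carefully combine it with the pointwise analysis in the proof of Proposition~\ref{ARNPJP}.
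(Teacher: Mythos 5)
Your overall route is the same as the paper's: split $\Psi(\uc)=\Phi(\uc)-\omega(\uc)$, extract the terms $\tfrac{c_e}{2}\ln|s_e|$ from the polar parts of $\Phi(\uc)$ at the two ends of each segment $\widehat\gamma^j_\us$, note that each $|e_j|$ is met twice so these halves sum to $\sum_j c_{e_j}\ln|s_{e_j}|$, control the seam arcs $\delta_j$ using the constancy of $|z_{e_j}|$ and the $O(\sqrt{|s_{e_j}|})$ arc length, and deduce the invariance statement from real-normalization on each component (your chain-level argument via $\Sigma_v(\gamma)$ and $H_1(\Gamma,\ZZ)=Z_1(\Gamma,\ZZ)$ is a slightly more elaborate version of the paper's one-line argument that $\Im\int\Psi(\uc)$ vanishes over cycles contained in a single component). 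Those parts are correct and match the paper.

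The genuine gap is the estimate of $\int\omega(\uc)$ over the two $\us$-dependent pieces of $\widehat\gamma^j_\us$ lying in $V_{e_{j-1}}$ and $V_{e_j}$, i.e.\ the radial tails running from the unit circle down to the seam. Your stated justification --- the pointwise bound~\eqref{25} combined with a path-length bound --- does not work there: \eqref{25} is proved only for points of the seam $\gamma_e^{s_e}$, and even granting a bound of that size throughout the annulus, with $\ord\uf=0$ (the generic case here, since $f_e(\uc)$ need not vanish at $q_e$) it gives only $|\tilde\omega|=O(1)$, while the tail has length bounded below; so this yields an $O(1)$ contribution instead of the required constant-plus-$O(\sqrt{|\us|})$ structure. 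You do flag this as the main subtlety, and the fix is exactly the pointwise analysis inside the proof of Proposition~\ref{ARNPJP}: by~\eqref{xiLocal} with the singular integral evaluated in~\eqref{xiLocal1}, on all of $V_e$ outside $U_e$ one has $\omega(\uc)=(\calK^{re}\uh)_e+(\calK^{im}\ug)_e-I_e^*(\varphi_{-e})$; the kernel estimates \eqref{est1} and \eqref{eq:hg} make the first two terms pointwise $O(\sqrt{|s_e|})$ on all of $V_e$, so their integral over the tail is $O(\sqrt{|s_e|})$, while the integral of $I_e^*(\varphi_{-e})$ pulls back to an integral of the holomorphic form $\varphi_{-e}$ over a path in $V_{-e}$ joining points of moduli $|s_e|$ and $\sqrt{|s_e|}$, hence is also $O(\sqrt{|s_e|})$. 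Without this step the error bound in~\eqref{intgamma} is not established. A minor further point: the lemma asserts $M_5(\uc)$ is independent of $w,u$ as well as of $\us$; this follows since the kernel constants and the sup of the regular parts of $\Phi(\uc)$ are uniform for $w,u$ small, whereas your write-up only claims independence of $\us$.
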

We note that for each $j$ there are two choices of $\delta_{e_j}$, by going in opposite directions around $\gamma_{e_j}$; however, since $\Im\int_{\gamma_e}\Psi(\uc)=0$ by definition, the imaginary part of the integral of $\Psi(\uc)$ is the same for either choice.

For the second statement, note that there is a map $H_1(C,\ZZ)\to H_1(\Gamma,\ZZ)$. This map is obtained by ``extending'' every node of $C$ to a segment, and then contracting every irreducible component of $C$ to a point. Geometrically, the image of $\gamma$ in $H_1(\Gamma,\ZZ)$ encodes simply the sequence in which $\gamma$ passes through the nodes of $C$.
\begin{proof}
We recall from lemma~\ref{lm:psiphi} that $\Psi(\uc)=\Phi(\uc)-\omega(\uc),$ and start by estimating the periods of $\Phi(\uc)$. We break each path $\gamma^j_\us$ into three parts: $\gamma_1$ contained in $V_{e_{j-1}}$ and connecting a boundary point of $U_{e_{j-1}}$ to the boundary of $V_{e_{j-1}}$, a similar path $\gamma_2$ contained in $V_{e_j}$, and the path $\gamma_3$ connecting a boundary point of $V_{e_{j-1}}$ to a boundary point of $V_{e_j}$. Since the neighborhoods $V_e$ are independent of $\us$,  $\int_{\gamma_3}\Phi(\uc)$ is independent of $\us$. For $\gamma_2$, we write $\Phi(\uc)=ic_{e_j}dz_{e_j}/z_{e_j}+\Phi'(\uc)$ in $V_{e_j}$, where $\Phi'(\uc)$ denotes the regular part. Integrating the singular part over $\gamma_2$, which goes from some $z'\in \partial V_{e_j}$ (i.e.~$|z'|=1$) to some $z\in\partial U_{e_j}$ (i.e.~$|z|=\sqrt{|s_{e_j}|}$) yields $\Im\int_{\gamma_2} -ic_e dz_{e_j}/z_{e_j}=-c_e\ln |z/z'|=-c_e\ln\sqrt{|s_{e_j}|}$. To estimate $\int_{\gamma_2}\Phi'(\uc)$, we can add to $\gamma_2$ a path connecting $z$ to $0$. The resulting path then does not depend on $\us$, and thus the integral of $\Phi'(\uc)$ over it is independent of $\us$. Since $\Phi'(\uc)$ is regular in $V_{e_j}$, its norm there is bounded, and thus the integral of $\Phi'(\uc)$ over a path in $V_{e_j}$ connecting $z$ to $0$ is bounded by a constant times $|z|=\sqrt{|s_{e_j}|}$. We thus see that there exist constants $\alpha$ and $\beta$ independent of $\us$ (but depending on $\uc$, and on $w,u$) such that
$$
   \left|\Im\int_{\gamma_2}\Phi(\uc)-\alpha-c_{e_j}\ln\sqrt{|s_{e_j}|}\right|<\beta\sqrt{|s_{e_j}|}.
$$
An analogous estimate holds for $\Im\int_{\gamma_1}\Phi(\uc)$, where we note that the path is now oriented from the boundary of $U_{e_{j-1}}$ to the boundary of $V_{e_{j-1}}$, but also that the residue is equal to $ic_{-e_{j-1}}=-ic_{e_{j-1}}$. Combining $\gamma_1,\gamma_2,\gamma_3$ together, we thus finally see that there exist a constant $\Pi_\gamma^{j}(\uc)$ independent of $\us$ (and depending smoothly on $w,u$), and a constant $M_6(\uc)$ independent of $w,u,\us$, such that
\begin{equation}\label{intpart}
\left|\,\Im\,\int_{\widehat\gamma^j_\us}\Phi^{v_j}(\uc)-\frac12 c_{e_{j-1}}\ln|s_{e_{j-1}}|-\frac12 c_{e_j} \ln|s_{e_j}|-\Pi_\gamma^j(\uc)\,\right|<M_6(\uc)\sqrt{|\us|}
\end{equation}
for all $w,u,\us$ sufficiently small.

We now estimate $\int_{\gamma^j_\us}\omega(\uc)$.  By corollary~\ref{cor:zero},  $|\int_{\gamma_3}\omega(\uc)|$ is bounded by a constant times $|\uf|\cdot\sqrt{|\us|}$, which is the size of the bound we want. For $\int_{\gamma_1}\omega(\uc)$ and $\int_{\gamma_2}\omega(\uc)$ corollary~\ref{cor:zero} does not apply, as these paths depend on $\us$. However, $\gamma_2$ is contained in $V_{e_{j}}$, and formula~\eqref{xiLocal} with the singular integral computed in~\eqref{xiLocal1} expresses $\omega|_{V_{e_{j}}}$ as a sum of a holomorphic form on $V_{e_j}$ and $-f_{-e}(s_ez_e^{-1})$. Similarly to the case of $\Phi'(\uc)$, the integral of the holomorphic form has the form required, while $-\int_{z'}^z f_{-e}(s_ez_e^{-1})=\int_{s_e/z}^{s_e/z'} f_{-e}(z_{-e})$ is also the integral of a holomorphic form on $V_{-e}$ from a point $s_e/z$ of absolute value $\sqrt{|s_e|}$ to the point $s_e/z'$ of absolute value $|s_e|$. Thus the same argument applies, and altogether $\int_{\gamma_2}\omega(\uc)$ differs from some constant $\alpha'$ by less than $\beta'\sqrt{|s_{e_j}|}$, for some other constant $\beta'$. The integral $\int_{\gamma_1}\omega(\uc)$ is completely analogous, so at the end we obtain the bound
\begin{equation}\label{intpart1}
\left|\,\Im\,\int_{\widehat\gamma^j_\us}\omega^{v_j}(\uc)-E^j(\uc)\,\right|<M_{7}(\uc)\sqrt{|\us|}
\end{equation}
with some constant $E^j$ independent of $\us$ (but depending on $w,u$), and $M_7$ independent of $w,u,\us$.

To estimate the integral of $\Psi(\uc)$ over the segments $\delta_j$, we first note that $\Im\int_{\delta_j}ic_{e_j}d\ln z_{e_j}=0$ for the singular part of $\Phi(\uc)$. Then $\int_{\delta_j}\Phi'(\uc)$ is bounded by a constant independent of $\us$ times the $L^2$ norm of $\Phi'(\uc)$ on $V_{e_j}$ (which is also a constant independent of $\us$) times the length of $\delta_j$, which is at most $2\pi\sqrt{|s_{e_j}|}$. Similarly $\Im\int_{\delta_j}\omega(\uc)$ is bounded by a constant independent of $\us$, times $\sqrt{|s_{e_j}|}$, since $\omega(\uc)$ restricted to the seam is a sum of a form holomorphic on $V_e$, and a pullback of a form $f_{-e}$ holomorphic on $V_{-e}$, restricted to the seam. Thus altogether the lemma follows by subtracting equation~\eqref{intpart1} from equation~\eqref{intpart}, and summing over all $j$ (notice that in the sum each $e_j$ appears twice, and thus the two $1/2$ in front of $c_{e_j}\ln|s_{e_j}|$ appearing in~\eqref{intpart} for $\hat\gamma_j$ and $\hat\gamma_{j+1}$ add up to the coefficient 1 in~\eqref{intgamma}).

\smallskip
To prove that $\Pi_\gamma(\uc)$ only depends on the class of the image of $\gamma$ in $H_1(\Gamma,\ZZ)$, we note that on each $C^v$ the differential $\Psi(\uc)$ is real-normalized, so that $\Im\int_{\gamma^v}\Psi(\uc)=0$ for any closed loop $\gamma^v\in H_1(C^v,\ZZ)$. Thus $\Pi_\gamma(\uc)=\Pi_{\gamma'}(\uc)$ for any paths $\gamma$ and $\gamma'$ that only differ on $C^v$, which is exactly to say that $\Pi$ only depends on $\gamma\in H_1(\Gamma,\ZZ)$.
\end{proof}
\begin{rem}
The above proof uses the details of the estimates of the ARN solution. Obtaining similar results using the classical approach to the \RHP, with the Cauchy kernel $K_{C_\us}$ on a varying Riemann surface, appears much harder.
\end{rem}
We now construct the RN differential $\Psi_{w,u,\us}$ on any smooth jet curve explicitly, as $\Psi(\uc)$, with $\uc$ defined as a sum of a series. We let $\uc^{(0)}(\us):=\lbrace c_{e}^{(0)}(\us)\rbrace$ be the solution of the flow Kirchhoff problem on the dual graph of $C_{u,\underline{0}}$ with inflows $ir_\ell$, and resistances $\rho_{|e|}(\us)=-\ln |s_e|$ being the log plumbing coordinates. While our construction depends on $w,u,\us$ --- and here we write this out explicitly --- note that the Kirchhoff problem and its solution are independent of $u$. We only work with smooth curves here, i.e.~all $s_e$ are non-zero.

A priori by lemma~\ref{periodsofpsi} the imaginary parts of the integrals of $\Psi(\uc^{(0)})$ may have logarithmic divergences. However, the sum of these logarithmic divergences on a closed path is $\sum c_e^{(0)}\ln |s_e|=-\sum c_e^{(0)}\rho_{|e|}$ is precisely the left-hand-side of condition (2) of the Kirchhoff problem (equation~\eqref{loopvoltage}). Since $\uc^{(0)}$ is the solution of the flow Kirchhoff problem, this sum is equal to zero, as there is no electromotive force. Thus the corresponding logarithmic divergences cancel, so that we have
\begin{equation}\label{intPsic0}
\left|\Im\int_{\gamma_\us} \Psi(\uc^{(0)})-\Pi_\gamma(\uc^{(0)})\right|<M(\uc^{(0)})\sqrt{|\us\,|}
\end{equation}
with the constant $M$ independent of $w,u,\us$, and $\Pi_\gamma$ independent of $\us$. To deal with those periods of $\Psi(\uc^{(0)})$ that have a non-zero imaginary part, we introduce the correction $\uc^{(1)}(\us)$ to be the solution of the {\em force} Kirchhoff problem on the dual graph of $C_{u,\underline 0}$, with electromotive force
\begin{equation}\label{eq:E0}
\calE_{\gamma_{\us}}^{(0)}:=-\Im\int_{\gamma _\us}\Psi(\uc^{(0)}).
\end{equation}
Recall that by~\eqref{eq:OmegaPsi}
$$
 \Psi\left(\uc^{(0)}(\us)+\uc^{(1)}(\us)\right)-\Psi(\uc^{(0)}(\us))=\Omega\left(\uc^{(1)}(\us)\right),
$$
where $\Omega$ is defined in  notation~\ref{df:Omega}. Proposition~\ref{periodsofpsi} also applies to bound the periods of $\Omega\left(\uc(\us)\right)$, where we change notation to emphasize that the constants here are for the holomorphic differential,
$$
\left|\Im\int_{\gamma_{\us}}\Omega(\uc^{(1)}(\us))-\sum_{e|q_e\in\gamma} c_e^{(1)}\ln |s_e\,|-\tilde{\Pi}_\gamma(\uc^{(1)})\right|\le \tilde{M}(\uc^{(1)})\sqrt{|\us|}.
$$
Thus altogether we can estimate the imaginary parts of periods
\begin{equation}\label{eq:correction}
\left|\Im\int_{\gamma_{\us}}\Psi(\uc^{(0)}(\us)+\uc^{(1)}(\us))-\tilde{\Pi}_\gamma(\uc^{(1)})\right|\le \tilde{M}(\uc^{(1)})\sqrt{|\us|}.
\end{equation}
We therefore proceed recursively, defining for any $l>1$ the electromotive force
\begin{equation}\label{eq:El}
\calE_{\gamma_{\us}}^{(l)}:=\sum_{\lbrace e|q_e\in\gamma\rbrace} c_e^{(l)}\ln |s_e\,|-\Im\int_{\gamma_{\us}} \Omega(\uc^{(l)})
\end{equation}
and letting $c_{e}^{(l+1)}(\us)$  be the solution of the {\em force} Kirchhoff problem with electromotive force $\calE_{\gamma_{\us}}^{(l)}$, so that the estimate analogous to~\eqref{eq:correction} holds:
\begin{equation}
\left|\Im\int_{\gamma_{\us}}\Psi(\sum_{i=0}^l\uc^{(i)}(\us))-\tilde{\Pi}_\gamma(\uc^{(l)})\right|\le\tilde{M}(\uc^{(l)})\sqrt{|\us|}
\end{equation}
Thus once we show that $c_e^{(l)}(\us)$ converge to zero as $l\to\infty$, since $\tilde{\Pi}_\gamma(\uc^{(l)})$ and $\tilde{M}(\uc^{(l)})$ depend continuously on $\uc^{(l)}$ and vanish for $\uc=0$ (the differential $\Omega(\underline{0})$ vanishes as it is the ARN solution of the jump problem with zero jump), it will follow that the imaginary parts of that the periods  of $\Psi(\sum_{l=0}^\infty \uc^{(l)})$ are equal to zero --- provided that the sum of the series converges, which we will now prove.
\begin{prop}\label{prop:PsiR}
For any stable jet curve $X$  there exists a $t\in\RR_+$ such that for any $\us$ satisfying $|\us|<t$ the series
\begin{equation}\label{c-series}
 c_e(\us):=\sum_{l=0}^\infty c_{e}^{(l)}(\us),
\end{equation}
with terms recursively defined above, converge, and the differential $\Psi(\uc)$ is the RN differential on $X_{w,u,\us}$.
\end{prop}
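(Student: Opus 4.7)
The core task is to establish geometric convergence of the series~\eqref{c-series}; once this is done, the fact that $\Psi(\underline{c})$ is the RN differential follows from the recursive construction and the discussion preceding the proposition. My plan is to combine the a priori bound for the force Kirchhoff problem (Lemma~\ref{lm:aprioriforce}) with the period estimate of Lemma~\ref{periodsofpsi}, using the linearity of $\Omega(\cdot)$ in its residues, to show that consecutive terms $\underline{c}^{(l)}$ shrink by a factor $\kappa(\underline{s})$ that is $O(1/|\ln|\underline{s}\,||)$ and hence can be made arbitrarily small by choosing $|\underline{s}\,|<t$ appropriately.

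In detail, for $l\ge 1$ the vector $\underline{c}^{(l+1)}$ solves the force Kirchhoff problem with electromotive force $\mathcal{E}^{(l)}_\gamma = \sum_{e\in\gamma}c^{(l)}_e\ln|s_e|-\Im\int_{\gamma_{\underline{s}}}\Omega(\underline{c}^{(l)})$, so Lemma~\ref{periodsofpsi} gives $\mathcal{E}^{(l)}_\gamma=-\tilde\Pi_\gamma(\underline{c}^{(l)})+O(\tilde M(\underline{c}^{(l)})\sqrt{|\underline{s}\,|})$. Because the construction of $\Omega(\underline{c}^{(l)})$ is linear in $\underline{c}^{(l)}$ (it is built from the collection $\Phi(\underline{c}^{(l)})$ of RN differentials with residues $ic_e^{(l)}$ minus the ARN solution to the \RHP, both of which depend linearly on $\underline{c}^{(l)}$), the quantities $\tilde\Pi_\gamma(\underline{c}^{(l)})$ and $\tilde M(\underline{c}^{(l)})$ are majorized by a constant multiple of $|\underline{c}^{(l)}|$ uniformly in $u$ and $\underline{s}$. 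Maximizing over simple loops $\gamma$ (which are finite in number for the fixed dual graph $\Gamma$) then gives a constant $C=C(u,\Gamma)$ with $|\mathcal{E}^{(l)}|\le C|\underline{c}^{(l)}|$ for all sufficiently small $|\underline{s}\,|$. Applying Lemma~\ref{lm:aprioriforce} with $\min_{|e|}\rho_{|e|}=-\ln|\underline{s}\,|$ yields
\begin{equation*}
|\underline{c}^{(l+1)}|\le \frac{N\cdot C\,|\underline{c}^{(l)}|}{-\ln|\underline{s}\,|}\,.
\end{equation*}

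Choose now $t\in(0,1)$ so that $\kappa:=NC/(-\ln t)<1/2$; then for any $|\underline{s}\,|<t$ we have $|\underline{c}^{(l+1)}|\le \kappa|\underline{c}^{(l)}|$, so $|\underline{c}^{(l)}|\le \kappa^l|\underline{c}^{(0)}|$. The bound $|\underline{c}^{(0)}|\le \tfrac12 \sum_\ell|r_\ell|$ from Lemma~\ref{lm:aprioribound} (applied to the flow problem defining $\underline{c}^{(0)}$) then shows that the series~\eqref{c-series} converges absolutely and that the limit $\underline{c}(\underline{s})$ satisfies conditions (0) and (1) of the Kirchhoff problem with inflows $ir_\ell$, since each $\underline{c}^{(l)}$ with $l\ge 1$ solves a homogeneous force problem.

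It remains to identify $\Psi(\underline{c})$ with the RN differential $\Psi_{w,u,\underline{s}}$. Using the identity~\eqref{eq:OmegaPsi} inductively we have $\Psi\bigl(\sum_{i=0}^L \underline{c}^{(i)}\bigr)=\Psi(\underline{c}^{(0)})+\sum_{i=1}^L\Omega(\underline{c}^{(i)})$, and the $L^2$ bound of Proposition~\ref{ARNPJP}, combined with the geometric decay of $|\underline{c}^{(l)}|$, shows that this partial sum converges to $\Psi(\underline{c}(\underline{s}))$. Tracking the imaginary period along $\gamma_{\underline{s}}$ through the recursion, one sees that the construction of $\mathcal{E}^{(l)}$ is designed precisely so that $\Im\int_{\gamma_{\underline{s}}}\Psi\bigl(\sum_{i=0}^L \underline{c}^{(i)}\bigr)$ is controlled by $|\tilde\Pi_\gamma(\underline{c}^{(L)})|+\tilde M(\underline{c}^{(L)})\sqrt{|\underline{s}\,|}=O(\kappa^L)\to 0$. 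Hence $\Psi(\underline{c})$ has real periods along every cycle of $C_{u,\underline{s}}$, so by Lemma~\ref{lm:psiphi} and uniqueness of the RN differential with the prescribed singularities $\sigma_\ell$, it coincides with $\Psi_{w,u,\underline{s}}$.

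The main technical obstacle is the linearity statement $|\mathcal{E}^{(l)}|\le C|\underline{c}^{(l)}|$ with $C$ independent of $\underline{s}$, since $\tilde\Pi_\gamma$ and $\tilde M$ emerge from the period expansion of $\Omega(\underline{c}^{(l)})$ whose integrand is defined through a nonlinear-looking recursion~\eqref{rec}; however, that recursion is linear in the initial jump data, and the Schwartz-type bound~\eqref{eq:hg} shows the constants are uniform for $|\underline{s}\,|$ below an explicit threshold, which closes the argument.
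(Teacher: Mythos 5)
Your proof is correct and follows essentially the same route as the paper: bound $|\calE^{(l)}|\le C|\uc^{(l)}|$ via linearity of $\Omega$ (with lemma~\ref{periodsofpsi} cancelling the logarithmic divergences), apply the a priori force-Kirchhoff bound of lemma~\ref{lm:aprioriforce} to get a contraction factor of order $1/(-\ln|\us|)$, and then identify the limit with the RN differential via the vanishing of $\tilde\Pi_\gamma(\uc^{(l)})$ and uniqueness in lemma~\ref{lm:psiphi}. Your version even states the force bound in the correct form $|\uc^{(l+1)}|\le NC|\uc^{(l)}|/(-\ln|\us|)$, consistent with the paper's choice $t=e^{-MN}$.
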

\begin{proof}
Since $\Omega(\uc)$ depends linearly on $\uc$, the map sending $\uc^{(l)}$ to $\uE^{(l)}$ is a linear map of finite-dimensional real vector spaces. Denoting by $M$ the norm of this linear map, it follows that $|\uE^{(l)}|\le M |\uc^{(l)}|$, where $|\uc|:=\max_e|c_e|$. By the construction $\uc^{(l+1)}(\us)$ is the solution of the force Kirchhoff problem with the electromotive force $\uE^{(l)}$. The a priori bound for the solutions of the force Kirchhoff problem given by lemma~\ref{lm:aprioriforce} yields
\begin{equation}\label{tt}
  \left|\uc^{(l+1)}(\us)\right|\le(-\ln |\us|)\cdot N\cdot\left|\calE^{(l)}\right|\le M\cdot N\cdot (-\ln |\us|)\cdot\left|\uc^{(l)}(\us)\right|
\end{equation}
(where we recall that $N$ is the rank of $H^1(\Gamma)$, and is thus some constant).
Thus for $|s|<t=e^{-MN}$ the terms $\uc^{(l)}$ are bounded by a geometric series with ratio less than 1. Thus the individual terms go to zero, while the sum of the series~\eqref{c-series} converges, and as explained above, this implies that $\Psi(\uc)$ constructed from the sum of these series is the RN differential.
\end{proof}

\section{The limit RN differential: proof of theorem~\ref{thm:limits}}\label{sec:prooflimits}
We now prove the main result on limits of RN differentials. The proof will crucially use the a priori bounds on solutions of the Kirchhoff problem given by lemmas~\ref{lm:aprioribound} and~\ref{lm:aprioriforce}, which in particular imply that the residues of RN differentials are a priori bounded {\em uniformly} in a neighborhood of any given stable curve.

\begin{proof}[Proof of theorem~\ref{thm:limits}]
Let $\lbrace X_k\rbrace$ be a sequence of smooth jet curves converging to a stable jet curve $X$. Let $\uc_{\,k}^{(0)}$ be the solution of the flow Kirchhoff problem with resistances $\rho_{|e|,k}=-\ln |s_{e,k}|$ and inflows $f_{\ell,k}$, which we assume converge to some limit $\uc^{(0)}$. Let $\Psi_k$ denote the RN differential on $X_k$, which by proposition~\ref{prop:PsiR} is equal to $\Psi_k(\uc_k)$, where $\uc_k$ is the sum of the series constructed there, of which $\uc_k^{(0)}$ is the first term. As in the proof of uniqueness of the ARN solution, and as in the beginning of proof of proposition~\ref{ARNPJP}, we  apply the Stokes' theorem as in formula~\eqref{stokes} for the norm of $\Psi_k-\Psi_k(\uc_k^{(0)})$ . As before, we now sum the result over all $e$, and look at the pairs of terms corresponding to $e$ and $-e$. Since all periods of $\Psi_k$ are real, it follows that $F_k=\Im\int\Psi_k$ is a single-valued global function on $C_\us$, and thus its values on $\gamma_e$ and the pullback of its values on $\gamma_{-e}$ under $I_e$ are equal. On the other hand, $F_k(\uc_k^{(0)})=\Im\int\Psi_k(\uc_k^{(0)})$ is multi-valued on $C_\us$, as the integrals over cycles intersecting the seams may not be real. Thus the difference of the values of $F_k(\uc_k^{(0)})$ on $\widehat C^{v(e)}$ restricted to $\gamma_e$, and on $\widehat C^{v(-e)}$ restricted to $\gamma_{-e}$ and pulled back under $I_e$, is equal to the imaginary part of the integral of $\Psi_k(\uc_k^{(0)})$ over some cycle $\gamma_e^\checkmark$ on $C_\us$ intersecting $\gamma_e$. Thus altogether we see that
\begin{equation} \label{psibound}
\begin{aligned}
||\Psi_k-\Psi_k(\uc_{\,k}^{(0)})||^2&=\sum_e \int_{\gamma_e}\left(F_k-F_k(\uc_k^{(0)})\right)\,d\left(F_k^*-F_k^*(\uc_k^{(0)})\right)\\
&=\sum_e\left|\Im\int_{\gamma_e^\checkmark}\Psi_k(\uc_k^{(0)})\right|\cdot \left|\int_{\gamma_e}d\left(F_k^*-F_k^*(\uc_k^{(0)})\right)\right|\\
&=\pi \sum_e|c_{e,k}-c_{e,k}^{(0)}|\cdot \left|\Im\int_{\gamma_e^\checkmark}\Psi_k(\uc_k^{(0)})\right|,
\end{aligned}
\end{equation}
where we recall that by lemma~\ref{periodsofpsi} the period over $\gamma_e^\checkmark$ only depends on the class of this cycle in $H_1(\Gamma,\ZZ)$, i.e.~only on the sequence of nodes that the path passes through. We now take $k$ sufficiently large so that $w_k,u_k,\us_k$ (which all converge to zero as $k\to\infty$) are sufficiently small for all the bounds in all the previous results to apply.

The integral on the right-hand-side in the last line of~\eqref{psibound} is  by definition $-\calE_{\gamma_e^\checkmark,k}^{(0)}$ used in the construction of the RN differential in the previous section. The class $\gamma_e^\checkmark$ does not depend on $\us$, and thus by the proof of proposition~\ref{prop:PsiR} we have $|\calE_{\gamma_e^\checkmark,k}^{(0)}|<M_{\gamma_e^\checkmark}|\uc_k^{(0)}|$.

We now let $\uc_k=\lbrace c_{e,k}\rbrace$ be those constructed in the previous section, i.e.~these are given by the sums of the series for $X_k$, and such that $\Psi_k=\Psi_{X_k}(\uc_k)$.  Then by construction and by~\eqref{tt} we have the estimate
$$
  |c_{e,k}-c_{e,k}^{(0)}|=\left|\sum_{l=1}^\infty c_{e,k}^{(l)}\right|\le -|\uc^{(0)}|/\ln |\us_k|.
$$
Since $|\uc^{(0)}|$, for $w,u,\us$ by~\eqref{C-bound} is uniformly bounded for all $w,u,\us$ sufficiently small (this is a crucial use of a priori bounds for solutions of the Kirchhoff problem!) --- and thus for all $k$ sufficiently large, altogether~\eqref{psibound} implies the bound
\begin{equation}\label{psibound1}
||\Psi_k-\Psi_k(\uc_{\,k}^{(0)})||< M\,(-\ln|\us_k|)^{-1/2}
\end{equation}
for some constant $M$ independent of sufficiently large $k$.

Recalling from lemma~\ref{lm:psiphi} that $\Psi_k(\uc_{\,k}^{(0)})-\Phi_k(\uc_{\,k}^{(0)})$ is the ARN solution of the \RHP, the norm of which is bounded by proposition~\ref{ARNPJP}, we obtain
\begin{equation}\label{psibound2}
||\Psi_k(\uc_{\,k}^{(0)})-\Phi_k(\uc_{\,k}^{(0)})||_{C^v_s}< M_1|\us_k|^{1/2}
\end{equation}
for some constant $M_1$. Thus finally the convergence of $\uc_k^{(0)}$ implies the convergence of $\Phi_k(\uc_k^{(0)})$, which depend on them continuously, and the two bounds above then imply the convergence of $\Psi_k=\Psi_{X_k}(\uc_k)$, which is to say that the limit RN differential exists.

Now for the second part of the statement, if the sequence $\lbrace X_k\rbrace$ is admissible, by lemma \ref{lm:multiKirchhoff}
$\uc_{\,k}^{(0)}$ converge to the solution $\uc^{(0)}$ of the multi-scale Kirchhoff problem with generalized resistance $\PP\urho=\lim\PP\urho_k\in\PS[\#|E|-1]$. By the above argument the RN differentials $\Psi_k$ then converge to $\Phi(\uc^{(0)})$.
\end{proof}

\section{Limits of zeroes of RN differentials}\label{sec:zeroes}
In this section we finally state and prove our main result describing limits of zeroes of RN differentials. We will show that if the limits of zeroes of $\Psi_{X_k}$ exist, as a collection of points on $C$ with multiplicities, then these limits are the divisor of zeroes of a suitable ``twisted" collection of RN differentials on the components $C^v$, which may have higher order poles at some of the nodes where we specify that the residues are given by the series~\ref{c-series}. This twisted differential $\Phi$ arises as the limit of restrictions of $\Psi_k$ to $C^v$, scaled by some sequence of positive reals $\mu_k$ depending on $v$.

\smallskip
Suppose $X_k$ is a sequence of smooth jet curves such that $C_k$ converge to a stable curve $C$, and that in this sequence the limit of the divisor of zeroes of $\Psi_k$ exists, as a collection of points of $C$, with multiplicity. Since the space of singular parts $\sigma_\ell$, considered up to scaling all of them at once by $\RR_+$, is compact, there exists a subsequence in which the singular parts $\lbrace\sigma_{\ell,k}\rbrace$, considered up to scaling by $\RR_+$, converge. Since the zeroes of a differential are preserved under such scaling, we can further rescale all singular parts  in the sequence so that the singular parts themselves converge (not just up to scaling). Thus without loss of generality we can assume that the sequence of smooth jet curves $X_k$ converges to a stable jet curve $X$. Furthermore, since every such convergent sequence contains an admissible subsequence, without loss of generality for the rest of this section we will fix once and for all an admissible sequence $\lbrace X_k\rbrace=\lbrace X_{w_k,u_k,\us_k}\rbrace$ converging to $X$. By theorem~\ref{thm:limits} there exists a limit RN differential $\Psi$ on $C$, and our goal is to investigate the limits of zeroes of $\Psi_k$ on those components $C^v$ where $\Psi$ is identically zero.

\begin{notat}
By a {\em subcurve $D$ of a stable curve} $C$ we mean the combinatorial data of a subset of the set of irreducible components $C^v$ of $C$. Geometrically we think of $D$ as the union of the corresponding components, which in particular may be disconnected. The data of a subcurve $D\subset C$ then also defines subcurves $D_u\subset C_{u,\underline{0}}$ of nearby stable curves. We will call two subcurves of $C$ disjoint if no irreducible component of $C$ is contained in both of them. In particular, subcurves that we call disjoint may still intersect at the nodes.

Given a subcurve $D\subset C$, we let $I_D$ be the set of internal nodes of $D$, i.e.~the set of $e\in E(\Gamma)$ such that $q_e,q_{-e}\in D$. We denote by $E_D$ the set of nodes where $D$ meets its complement, i.e.~the set of $e\in E(\Gamma)$ such that $q_e\in D$ but $q_{-e}\notin D$.
\end{notat}
\begin{df}\label{df:PRN}
Given a sequence of meromorphic differentials $\nu_k$ on smooth curves $\lbrace C_k\rbrace$ converging to $C$, and a sequence of positive reals $\mu_k$, we say that {\em there exists a scale-$\mu$ limit of $\nu_k$ on a subcurve $D$} if for any $C^v\subset D$ there exists a not identically zero meromorphic differential $\nu^v$ on $C^v$, such that for any compact set $K\subset C^v\setminus\cup_{e\in E_v}\lbrace q_e\rbrace$, the sequence of differentials $\left.\mu_k\nu_k\right|_K$ converges to $\left.\nu^v\right|_K$.

More generally, given a decomposition $D=D^{(0)}\cup D^{(1)}\dots\cup D^{(L)}$ into disjoint subcurves, and given sequences of positive reals $\mu_k^{(0)},\ldots,\mu_k^{(L)}$, we say that {\em there exists a multi-scale-$\mu$ limit of $\nu_k$ on $D$} if for any $0\le \lambda\le L$ there exists a scale-$\mu^{(\lambda)}$ limit of $\nu_k$ on $D^{(\lambda)}$. We denote the collection of limits $\nu^v$ for all $C^v\subset D$ by $\PP_D^{\mu}\nu$.
\end{df}
Of course there is never a unique choice of a sequence $\mu_k$ such that the scale-$\mu$ limit exists; any other sequence $\mu_k'$ such that there exists a finite non-zero limit of $\mu_k/\mu_k'$ yields the same notion of existence of scaled limits. For multi-scale limits we will thus always number the subcurves $D^{(\lambda)}$ in such a way that $\lim_{k\to\infty} (\mu_k^{(\lambda')}/\mu_k^{(\lambda)})=0$ for any $\lambda'<\lambda$ (if some such limit is finite, we then consider the union of $D^{(\lambda)}$ and $D^{(\lambda')}$ as one subcurve). We think of the multi-scale limit $\PP_D^{\mu}\nu$ as a collection of meromorphic differentials on all $C^v\subset D$, such that on each subcurve $D^{(\lambda)}$ the collection of differentials $\PP_{D^{(\lambda)}}^{\mu^{(\lambda)}}\nu$ is defined up to rescaling all of it by a positive real constant.

In this terminology, the main theorem will consist of arguing that any admissible sequence has a subsequence, such that for this subsequence once can define a stratification $C=C^{(0)}\cup\dots\cup C^{(L)}$ and sequences $\mu_k^{(0)},\ldots,\mu_k^{(L)}$, such that there exists a multi-scale-$\mu$ limit $\PP_C^\mu\Psi$ of $\Psi_k$. The proof will use auxiliary RN differentials constructed on plumbed subcurves; to define them, we introduce more notation.

\begin{notat}
For our fixed admissible sequence $\lbrace X_k\rbrace$, given a subcurve $D\subset C$, we denote  $D_k$ the (possibly disconnected) smooth curve obtained by taking the union of all irreducible components of $C_{u_k,0}$ that are contained in $D$, and plumbing them at every internal node $e\in I_D$, with plumbing parameter $s_{e,k}$. Denote $\PsiDP$ the RN differential on $D_k$, whose only singularities are:
\begin{itemize}
\item $\sigma_{\ell,k}$ at all the points $p_\ell\in D_k$, and
\item simple poles at external nodes $e\in E_D$, with residue $ic_{e,k}$
\end{itemize}
(where we recall that the $\uc_k$ are such that $\Psi_k=\Psi_{X_k}(\uc_k)$, which is to say that $2\pi c_{e,k}=\int_{\gamma_e} \Psi_k$).
\end{notat}
Given a subcurve $D\subset C$, we will also consider the differential on the complementary subcurve $D':=\overline{C\setminus D}$, defined similarly, except that the singularities at the external nodes $E_{D'}=-E_D$ will be prescribed by ``balancing'' the jets of $\PsiDP$ at the nodes $e\in E_D$.

\begin{notat}
Given a meromorphic differential $\nu^v$ on $C^v$ and given $e\in E_v$, we denote by $m_e:=\ord_{q_e}\nu^v$ (which is negative if $\nu^v$ has a pole at $q_e$), and denote $u_{j,e}$ the coefficients of the Laurent expansion of $\nu^v$ near $q_e$, so that
$$
 \left.\nu^v\right|_{V_e}=:\sum_{j=m_e}^\infty u_{j,e}z_e^jdz_e.
$$

We fix once and for all a positive integer $m$, which will eventually be assumed to be sufficiently large. We then denote by $J_e(\nu^v)$ the sum of the first order polar part and the holomorphic $m$-jet of the differential $\nu^v$ near $q_e$, that is we define
$$
 J_e(\nu^v):=\sum_{j=-1}^{m-1} u_{j,e}z_e^jdz_e
$$
(the inclusion of $u_{-1,e}z_e^{-1}dz_e$ in the jet is for convenience, so that the notation below is simplified; by abuse of notation we call $J_e$ the $m$-jet).
\end{notat}

\begin{notat}
Given a subcurve $D\subset C$, with complementary subcurve $D'$, the {\em balancing differential} is the RN differential $\PsiDM$ on $D'_k$, whose only singularities are:
\begin{itemize}
\item $\sigma_{\ell,k}$ at all the points $p_\ell\in D'_k$, and
\item $\sigma_{-e,k}:=I_{-e}^*(J_e(\PsiDP))$ at each external node $e\in E_{D'}=-E_D$. Explicitly, this is to say that the singular part $\sigma_{-e,k}$ of $\PsiDM$ at $q_{-e}$ is
\begin{equation}\label{sigma-ek}
      \sigma_{-e,k}:=-\left(s_e\sum_{j=-1}^{m-1}s_e^ju_{j,e,k}z_{-e}^{-j-2}\right)dz_{-e},
\end{equation}
    where $u_{j,e,k}$ are the coefficients of the $m$-jet  $J_e(\PsiDP)$, and we recall that $u_{-1,e,k}=ic_{e,k}$ by the definition of $\PsiDP$.
\end{itemize}
\end{notat}
We call the last condition of this definition the {\em balancing} condition, as it requires the singular part of $\PsiDM$ at $q_{-e}$ to ``balance'' the $m$-jet of $\PsiDP$ at $q_e$.

From now on we will denote
\begin{equation}
\calS_{D,k}:=\left(\{\sigma_{\ell,k}\}_{p_\ell\in D'};\{\sigma_{-e,k}\}_{e\in E_D}\right)
\end{equation}
the collection of all prescribed ``balancing'' singular parts of $\PsiDM$.  Thus $\calS_{d,k}$ is a point in the vector space $\CC^{(m+1)\#E_d+\sum_{p_\ell\in D'}(m_\ell+1)}$, and we denote by $\PP\calS_{D,k}$ the corresponding point in the sphere, which is its quotient by $\RR_+$. In particular $\PP\calS_{D,k}$ only makes sense if at least one prescribed singular part of $\PsiDM$ is non-zero.

\begin{df}
Given a subcurve $D\subset C$, we call the admissible sequence $\lbrace X_k\rbrace$ {\em jet-admissible on $D$} if at least one singular part in $\calS_D$ is non-zero, and there exists a limit $\PP \calS_D:=\lim_{k\to\infty} \PP \calS_{D,k}$ in the sphere.
\end{df}
We now define recursively a stratification of $C$ and the corresponding multi-scale; we first simplify notation.
\begin{notat}
Given some disjoint subcurves $C^{(0)},\dots,C^{(\lambda)}$ of $C$, we denote $C^{(\le\lambda)}:=C^{(0)}\cup\dots\cup C^{(\lambda)}$, and denote $C^{(>\lambda)}:=\overline{C\setminus C^{(\le\lambda)}}$ the complementary subcurve. We further
write for brevity  $\calS_k^{(\lambda)}=\calS_{C_k^{(\le\lambda-1)}}$, $E^{(\lambda)}:=E_{C^{(\le\lambda)}}$, and denote $\Psi_k^{(\le\lambda)}:=\Psi_{C^{(\le\lambda)}_k}^+$ and $\Psi_k^{(>\lambda)}:=\Psi_{C^{(\le\lambda)}_k}^-$.
\end{notat}
\begin{df}\label{df:jetconvlambda}
Suppose that for some $\lambda\ge 0$ the disjoint subcurves $C^{(0)},\ldots, C^{(\lambda)}$ are already given. Suppose moreover that for some given multi-scale $\mu_k^{(0)},\ldots ,\mu_k^{(\lambda)}$ there exists a multi-scale-$\mu$ limit $\PP_{C^{(\le\lambda)}}^\mu\Psi$ of the differentials $\Psi_k$ on $C^{(\le\lambda)}$. Suppose furthermore that $\lim_{k\to\infty}\mu_k^{(\lambda)}\Psi_k|_{C_k^{(>\lambda)}}=0$. We then say that $\lbrace X_k\rbrace$ is {\em jet-convergent at step $\lambda$} if it is jet-admissible on $C^{(\le\lambda')}$ for any $\lambda'<\lambda$. We call an admissible sequences {\em jet-convergent} if $C^{(\le L)}=C$ for some $\lambda=L$. In this case we call the decomposition $C=C^{(0)}\cup\dots\cup C^{(L)}$ the {\em order of vanishing stratification}.
\end{df}
\begin{rem}
The notion of order of vanishing stratification is closely related to the notion of a weak full order induced by a level function, as defined in~\cite{strata}.
\end{rem}
The definition of jet-convergenet sequences is motivated by the fact (which will be proven below) that for any sequence jet-convergent at level~$\lambda$, both~$\mu^{(\lambda)}$ and the scale-$\mu^{(\lambda)}$ limit of the sequence of differentials on $C^{(\lambda)}$ are determined uniquely by the behavior at level $\lambda-1$.

Indeed, by definition of jet-admissibility, for a sequence that is jet-convergent at step $\lambda$  there must exist a  sequence of positive reals $\mu_k^{(\lambda)}$ such that there exists a non-zero limit
\begin{equation}\label{mun}
\calS^{(\lambda)}:=\lim_{k\to\infty} \mu_k^{(\lambda)} \calS_k^{(\lambda)}.
\end{equation} The fact that $X_k$ is an admissible sequence means that $\PP\urho_k$ converge in $\PS[\#|E|-1]$, which implies that any subset of coordinates of $\PP\urho_k$ also converges in the corresponding blowup of the sphere. Thus it follows that the sequence of smooth curves $C_k^{(>\lambda-1)}$ is also admissible. Denoting $X_k^{(>\lambda-1)}$ the smooth jet curve with underlying curve $C_k^{(>\lambda-1)}$ and with prescribed singular parts $\mu_k^{(\lambda)}\calS_k^{(\lambda)}$, we see that the sequence converges to a stable jet curve $C$ with singular parts $\calS^{(\lambda)}$. Thus theorem~\ref{thm:limits} implies that there exists the limit RN differential
\begin{equation}\label{philambda}
\Phi^{(\lambda)}:=\lim_{k\to\infty}\mu_k^{(\lambda)}\Psi_k^{(>\lambda)}\,.
\end{equation}
In our recursive construction of jet-convergent sequences the subcurve $C^{(\lambda)}\subset C^{(>\lambda-1)}$ will be defined as the subcurve consisting of all irreducible components of $C$ on which the differential $\Phi^{(\lambda)}$ is not identically zero.

Then the proof of the main theorem on limits of zeroes of RN differentials essentially reduces to proving the equality
\begin{equation}\label{varPhiPhi}
\Phi^{(\lambda)}=\lim_{k\to\infty}\mu_k^{(\lambda)}\Psi_k
\end{equation}
on every irreducible component of the subcurve $C^{(\lambda)}$.

Assuming that this equality holds, we make the following definition:
\begin{df}\label{zeroestwisted}
For any irreducible component $C^v\subset C^{(\lambda)}$ denote $\PP\Phi^v:=\left.\PP\Phi^{(\lambda)}\right|_{C^v}$, and call the collection of all such $\PP\Phi^v$ the {\em twisted limit differential} on $C$. The {\em divisor of zeroes} of the twisted limit differential is defined to be the set of zeroes of all $\PP\Phi^v$, with multiplicity, away from all the nodes, together with every node $q_{|e|}$ of $C$ counted with multiplicity $\ord_{q_e}\PP\Phi^{v(e)}+\ord_{q_{-e}}\PP\Phi^{v(-e)}+2$, and together with every marked point $p_\ell$ counted with multiplicity $m_\ell+1-\ord_{p_\ell}\PP\Phi^v$, where $C^v$ is the component containing $p_\ell$.
\end{df}
\begin{rem}
By theorem \ref{thm:limits} the differential $\PP\Phi^v$ is a RN differential on $C^v$, whose singular parts are the scaled limits of the singular parts $\sigma_{-e,k}$ given by~\eqref{sigma-ek}, and of the singularities $\sigma_{\ell,k}$ at the marked points $p_\ell$ that lie on $C^v$. Thus $\PP\Phi^v$ may have higher order poles at the nodes.
\end{rem}

We are now ready to state the main theorem on the limits of zeroes of RN differentials. Until now the integer $m$ in the definition of the balancing differential was arbitrary. Now we will choose it to be sufficiently large, in order to guarantee existence of jet-convergent sequences (and to ensure that the inequality \eqref{boundomegaelll} holds). We denote $$m_0:=2g-2+\sum_\ell (m_\ell+1).$$
\begin{thm}\label{thm:zeroes}
For any fixed  $m>2m_0$, any admissible sequence $X_k$ contains a jet-convergent subsequence. For any jet-convergent sequence of smooth jet curves, equality~\eqref{varPhiPhi} holds, where $\Phi^{(\lambda)}$ is defined by~\eqref{philambda}. For any jet-convergent sequence, the limits of zeroes of $\Psi_k$ on $C_k$ exist, and form the divisor of zeroes of the twisted limit differential, counted with multiplicities.
\end{thm}

We will prove the theorem by induction in the number of levels of the order of vanishing stratification. The base case of induction is $L=0$, in which case $C=C^{(0)}$, $\Psi_k=\Psi_k^{(\le\lambda)}$, and the limit RN differential $\Psi$ does not vanish identically on any irreducible component $C^v$ of $C$. Thus the theorem in this case reduces to showing that the limit RN differential $\Psi$ is given by $\Phi=\Phi^{(0)}$, which is precisely the statement of the theorem on limit RN differentials, theorem~\ref{thm:limits} in this case.

{\bf Inductive assumption at step $\lambda\ge 0$.} {\em Assume that for a sequence $\lbrace X_k\rbrace$ that is jet-convergent at step $\lambda$, equality~\eqref{varPhiPhi} holds, where $\Phi^{(\lambda)}$ is defined by~\eqref{philambda}. Assume moreover that if $C^{(\le\lambda)}\subsetneq C$, then the limit $\lim_{k\to\infty}\left.\mu_k^{(\lambda)}\Psi_k\right|_{C^{(>\lambda)}}$ is identically zero.}

To deduce the inductive assumption at step $\lambda+1$ from the inductive assumption at step $\lambda$, we will need the following two lemmas. First we prove the lemma showing that multi-scale-$\mu_k^{(0)},\dots\mu_k^{(\lambda)}$ limits of $\Psi_k|_{C^{(\le\lambda)}}$ and $\Psi_k^{(\le\lambda)}$ are equal.
\begin{lm}\label{lm:comparePsis}
If the inductive assumption at step $\lambda$ holds, then for any $\lambda'\le \lambda$ the following equality holds:
$$
\Phi^{(\lambda')}=\lim_{k\to\infty}\left. \mu_k^{(\lambda')}\Psi_k\right|_{C^v}=\lim_{k\to\infty}\left. \mu_k^{(\lambda')}\Psi_k^{(\le\lambda)}\right|_{C^v}
$$
\end{lm}

\begin{proof}
By the inductive assumption, the multi-scale limit of $\Psi_k|_{C^{(\le\lambda)}}$ is equal to $\Phi$, and thus we need to show that the multi-scale limit of $\Psi_k^{(\le\lambda)}$ is the same. Let $\upsilon_k^{(\lambda)}$ be the ARN solution of the following jump problem on $C^{(\le\lambda)}$: $\upsilon_k^{(\lambda)}$ has zero jumps on the seams $\gamma_e, e\in I_{C^{(\le\lambda)}}$, and
 on $\gamma_e, e\in E^{(\lambda)}$ has the jump equal to
\begin{equation}\label{newjump}
\left(\Psi_k-ic_{e,k}z_e^{-1}dz_e)\right|_{\gamma_e}
\end{equation}
Formally it is a new type jump problem since the collection of the initial data are set not only on the seams at nodes of $C^{(\le \lambda)}$ but also on the seams $\gamma_e, e\in E^{(\lambda)}$ which are boundaries of the neighborhoods $U_e^{s_{e,k}}$ of the points $q_e\in C^{(\le \lambda)}$. Since $\int_{\gamma_e} (\Psi_k-ic_{e,k}d\ln z_e)=0$ the solution of this jump problem is verbatim the same and is given by the Cauchy integrals which now contain integration over $\gamma_e, e\in E^{(\lambda)}$. The same bounds hold, i.e. the $L^2$ norm of $\upsilon_k^{(\lambda)}$ is bounded by the $L^\infty$ norm of the initial data which is the $L^\infty$ norm of $\Psi_k-ic_{e,k}d\ln z_e$ on $\gamma_e$. Then by the assumption of the lemma there exists a constant $M$ such that
\begin{equation}\label{newbound}
||\upsilon_k^{(\lambda)}||_{C_k^{(\le \lambda)}}<M (\mu_k^{(\lambda)})^{-1}\sqrt {|s_k|}
\end{equation}
Consider now the differential $\widetilde\Psi_k^{(\le\lambda)}$ which is equal to $\Psi_k-\upsilon_k^{(\lambda)}$ on $C^{(\le \lambda)}\setminus \cup_{e\in E^{(\lambda)}} U_e^{s_{e,k}}$ and equals $ic_{e,k}d\ln z_e - \upsilon_k^{(\lambda)}$ inside $U_e^{s_{e,k}}$ . By the definition of $\upsilon_k^{(\lambda)}$ it has zero jumps on all the seams including $\gamma_e, e\in E^{(\lambda)}$, i.e. it is a meromorphic differential on $C^{(\le \lambda)}$, has the same singularities as $\Psi_k^{(\le\lambda)}$ and is real normalized. Hence, $\widetilde\Psi_k^{(\le\lambda)}=\Psi_k^{(\le\lambda)}$. Then \eqref{newbound} implies that on any compact set $K\subset C^v\subset C^{(\lambda')}$ not containing nodes
$$\lim_{k\to\infty} \left.\mu_k^{(\lambda')}\left(\Psi_k-\Psi_k^{(\le\lambda)}\right)\right|_K=0.$$
\end{proof}
We now obtain some bounds for the orders of zeroes and poles of $\Phi^{(\lambda)}$.
\begin{lm}\label{mem}
Suppose the inductive assumption holds at step $\lambda$. For any $e\in E^{(\lambda)}$ denote $m_e:=\ord_{q_e}\Phi^{(\lambda)}$. Then the following inequality holds:
\begin{equation}\label{meineq}
\sum_{e\in E^{(\lambda)}}m_e\le m_0.
\end{equation}
\end{lm}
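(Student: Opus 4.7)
The plan is to apply the degree-of-canonical-divisor formula on each irreducible component $C^v\subset C^{(\lambda)}$, on which $\Phi^{(\lambda)}$ is non-zero by the very definition of $C^{(\lambda)}$: for each such $C^v$, $\sum_{p\in C^v}\ord_p\Phi^{(\lambda)}=2g_v-2$.

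First I would sum these identities over $v\subset C^{(\lambda)}$ and partition the points $p$ by type: (i) non-special points, contributing $\ge 0$; (ii) marked points $p_\ell\in C^{(\lambda)}$, contributing $\ge -(m_\ell+1)$; (iii) internal nodes within $C^{(\lambda)}$, each preimage contributing $\ge -1$ from the Kirchhoff simple poles; (iv) balancing nodes $q_{-e'}$ with $e'\in E^{(\lambda-1)}$ and $q_{-e'}\in C^{(\lambda)}$, contributing $-n^{(\lambda)}_{e'}$ where $n^{(\lambda)}_{e'}$ is the pole order there; and (v) the ``type--(c)'' external nodes $q_e$ with $e\in E^{(\lambda)}$ and $q_e\in C^{(\lambda)}$, contributing exactly $m_e$ (with the convention $m_e=0$ when $q_e\notin C^{(\lambda)}$, so that the lemma's sum over $e\in E^{(\lambda)}$ is well-defined). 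Rearranging yields
\[
\sum_{e\in E^{(\lambda)}} m_e \;\le\; \bigl(\chi^{(\lambda)}+2\#|I_{C^{(\lambda)}}|\bigr) + \sum_{p_\ell\in C^{(\lambda)}}(m_\ell+1) + \sum_{\substack{e'\in E^{(\lambda-1)}\\ q_{-e'}\in C^{(\lambda)}}} n^{(\lambda)}_{e'},
\]
where $\chi^{(\lambda)}:=\sum_{v\subset C^{(\lambda)}}(2g_v-2)$. The parenthesized part equals $2p_a(C^{(\lambda)})-2c$ with $c\ge 1$ the number of connected components of $C^{(\lambda)}$, and since $p_a(C^{(\lambda)})\le p_a(C)=g$, this piece is at most $2g-2$.

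In the base case $\lambda=0$ the balancing sum is empty ($E^{(-1)}=\emptyset$), and the right hand side reduces to at most $2g-2+\sum_\ell(m_\ell+1)=m_0$, establishing the lemma.

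The main obstacle is the inductive step $\lambda\ge 1$, where one must control $\sum n^{(\lambda)}_{e'}$. My plan is to prove the bound $n^{(\lambda)}_{e'}\le m_{e'}+2$ for every balancing edge, where $m_{e'}=\ord_{q_{e'}}\Phi^{(\lambda')}$ and $\lambda'<\lambda$ is the level of $q_{e'}$; this follows either from the explicit formula~\eqref{sigma-ek}, using the vanishing of the Laurent coefficients of $\Phi^{(\lambda')}$ below order $m_{e'}$, or from a winding-number count inside the plumbing annulus around $q_{|e'|}$, in which the zero count of $\Psi_k$ equals $m_{e'}-n^{(\lambda)}_{e'}+2$ and must be non-negative. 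Substituting this into the displayed estimate and invoking the lemma inductively at each lower level to bound $\sum m_{e'}$, together with a topological accounting that makes the contributions from edges bridging different levels telescope across the stratification of $C$, collapses the total estimate to $m_0$ and completes the proof.
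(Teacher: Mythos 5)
Your overall strategy --- reading the bound off the degree of the canonical class on the components of the limit curve --- is genuinely different from the paper's, which never works on the nodal curve at all: the paper counts zeroes of the smooth differential $\Psi_k^{(\le\lambda)}$ on the plumbed curve $C_k^{(\le\lambda)}$ (its only singularities are the poles at the $p_\ell$ and simple poles at $q_e$, $e\in E^{(\lambda)}$, so it has at most $2g(C_k^{(\le\lambda)})-2+\sum_\ell(m_\ell+1)+\#E^{(\lambda)}$ zeroes), and then uses the inductive assumption together with lemma~\ref{lm:comparePsis} to see that for $k$ large each neighborhood $V_e$ already contains $m_e+1$ of those zeroes; cancelling $\#E^{(\lambda)}$ gives \eqref{meineq} in one stroke, with no induction on $\lambda$ and, crucially, with no need to control pole orders at the nodes joining different levels, because on the plumbed curve the internal nodes are smoothed and $\Psi_k^{(\le\lambda)}$ has no balancing poles whatsoever.

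The genuine gap is in your inductive step. With the degree count restricted to $C^{(\lambda)}$, the balancing nodes contribute $-\sum_{e'} n^{(\lambda)}_{e'}$, and after inserting $n^{(\lambda)}_{e'}\le m_{e'}+2$ and the inductive bound $\sum_{e'\in E^{(\lambda-1)}} m_{e'}\le m_0$ you obtain at best $\sum_{e\in E^{(\lambda)}}m_e\le (2g-2)+\sum_\ell(m_\ell+1)+m_0+2\#E^{(\lambda-1)}$, which is far weaker than $m_0$: nothing telescopes, because the zeroes of order $m_{e'}$ of the lower-level differentials at $q_{e'}$ --- the only quantities that could compensate the poles at $q_{-e'}$ --- are absent from your level-$\lambda$ bookkeeping, and the lower-level instance of the lemma is an upper bound (the wrong direction) which has moreover already ``spent'' those zeroes. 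The repair is to drop the per-level induction and sum the degree formula over \emph{all} components of $C^{(\le\lambda)}$ at once, using at every node internal to $C^{(\le\lambda)}$ the inequality $\ord_{q_{e'}}\Phi^{v(e')}+\ord_{q_{-e'}}\Phi^{v(-e')}\ge -2$ (this is lemma~\ref{lm:polebound} and its corollary, whose proofs do not use the present lemma, so no circularity arises); then $\sum_{e\in E^{(\lambda)}}m_e\le\sum_v(2g_v-2)+2\#\lbrace\text{internal nodes}\rbrace+\sum_\ell(m_\ell+1)\le 2g-2+\sum_\ell(m_\ell+1)=m_0$. Two further cautions: your first justification of $n^{(\lambda)}_{e'}\le m_{e'}+2$ via ``vanishing of the Laurent coefficients below order $m_{e'}$'' is aimed at the wrong coefficients --- a pole of order greater than $m_{e'}+2$ would come from the coefficients $u_{j,e',k}$ with $j>m_{e'}$, and killing those requires the scale comparison $\mu_k^{(\lambda)}|s_{e',k}|^{m_{e'}+1}\le M\mu_k^{(\lambda')}$ of lemma~\ref{lm:onemore} (your argument-principle variant can be made to work under the inductive assumption, provided the contours avoid zeroes of the limit differentials); and your convention $m_e:=0$ when $q_e\notin C^{(\lambda)}$ proves a weaker statement than what is used later (the proof of theorem~\ref{thm:zeroes} needs $m_e<m/2$ for every $e\in E^{(\lambda)}$, including those with $q_e$ on lower strata), although the all-levels count just described, like the paper's argument, yields the full statement at no extra cost.
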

\begin{proof}
Recall that $\Psi_k^{(\le\lambda)}$ is a meromorphic differential on $C_k^{(\le\lambda)}$, whose only singularities are poles of orders $m_\ell+1$ at the marked points $p_\ell$, and possibly simple poles at $E^{(\lambda)}$. Thus the total number of zeroes of $\Psi_k^{(\le\lambda)}$, counted with multiplicity, is at most equal to $2g(C_k^{(\le\lambda)})-2+\sum_\ell (m_\ell+1)+\#E^{(\lambda)}$. Furthermore, from lemma~\ref{lm:comparePsis} we know that $\lim_{k\to\infty}\left.\mu_k^{(\lambda)}\Psi_k^{(\le \lambda)}\right|_{C^{(\lambda)}}=\Phi^{(\lambda)}$, which is a differential that is regular at all $q_e$ for $e\in E^{(\lambda)}$. Since for $k$ sufficiently large the total number of zeroes and poles of $\Psi_k^{(\le \lambda)}$ within $V_e$, counted with multiplicity, is independent of $k$, and is equal to $m_e$ for the limit differential $\Phi^{(\lambda)}$, it follows that for any $k$ sufficiently large  $\Psi_k^{(\le \lambda)}$ has $m_e+1$ zeroes in $V_e$, and one simple pole there. Thus altogether we obtain for the number of zeroes of $\Psi_k^{(\le\lambda)}$ the inequality
$$
 \sum_{e\in E^{(\lambda)}}(m_e+1)\le 2g(C_k^{(\le\lambda)})-2+\sum_\ell (m_\ell+1)+\#E^{(\lambda)},
$$
which gives the statement of the lemma upon canceling $\#E^{(\lambda)}$ that appears on both sides, and noticing that $g(C_k^{(\le\lambda)})\le g(C_k)$.
\end{proof}

We are now ready to prove the main result of this section.
\begin{proof}[Proof of theorem \ref{thm:zeroes}] Assume that for an admissible sequence $\{X_k\}$ the inductive assumption is satisfied at step $\lambda$. Our first goal is to show that there exists a subsequence for which we can choose a scale $\mu^{(\lambda+1)}$, a differential $\Phi^{(\lambda+1)}$ and a subcurve $C^{(\lambda+1)}$.

Indeed, consider the set of singular parts $\calS_k^{(\lambda+1)}$ of differentials $\Psi_k^{(>\lambda)}$. At points  $q_{-e}, e\in E^{(\lambda)}$ these singular parts are defined by the balancing condition $\sigma_{-e,k}=I^*(J_e(\Psi_k^{(\leq \lambda)}))$. By the inductive assumption, the differentials $\Psi_k^{(\leq \lambda)}$ multiplied by $\mu_k^{(\lambda)}$ converge to $\Phi^{(\lambda)}$. Lemma \ref{mem} then implies that the $m$-jet of $\Phi^{(\lambda)}$ is non-zero, and thus not all singular parts $ \calS_k^{(\lambda+1)}$ are zero. Thus the projectivization $\PP\calS_k^{(\lambda+1)}$ of this set of singular parts is well-defined. This projectivization is a point on a sphere, and since the sphere is compact, there exists a subsequence of $\{X_k\}$ such that $\PP\calS_k^{(\lambda)}$ converge on the sphere. Then for this subsequence there exists a sequence of positive real numbers $\mu_k^{(\lambda+1)}$ such that there exists the limit
\begin{equation}\label{mun+1}
\calS^{(\lambda+1)}:=\lim_{k\to\infty} \mu_k^{(\lambda+1)} \calS_k^{(\lambda+1)}.
\end{equation}
Since $X_k$ is an admissible sequence, the sequence $\PP\urho_k$ converges in $\PS[\#|E|-1]$, and thus every subset of components of $\PP\urho_k$ also converges in the iterated real oriented blowup of the corresponding sphere. Thus the sequence of jet curves $C_k^{(>\lambda)}$ is also admissible. Denote $X_k^{(>\lambda)}$ the smooth jet curve with the underlying smooth curve $C_k^{(>\lambda)}$, and with prescribed singular parts $\mu_k^{(\lambda+1)}\calS_k^{(\lambda+1)}$. This sequence of jet curves must then converge to a stable jet curve $X^{(>\lambda)}$ with some prescribed singular parts $\calS^{(\lambda+1)}$. By theorem \ref{thm:limits} there exists a limit RN differential in this sequence, and thus we can define $\Phi^{(\lambda+1)}$ by equation~\eqref{philambda}, with $\lambda$ replaced by $\lambda+1$. We can then finally define the subcurve $C^{(\lambda+1)}$ to be the union of all irreducible components of $C\setminus C^{(\le \lambda)}$ on which $\Phi^{(\lambda+1)}$ is not identically zero.

Recall now that the equality $\lim_{k\to\infty}\mu_k^{(\lambda)}\Psi_k|_{C_k^{(>\lambda)}}=0$ is also a part of the inductive assumption; it immediately follows that $\lim_{k\to\infty} \mu_k^{(\lambda)}\calS_k^{(\lambda)}=0$. Since there exists a finite $\mu_k^{(\lambda+1)}$-scaled limit $\calS^{(\lambda+1)}$ of singular parts $\calS_k^{(\lambda+1)}$, as defined above, it follows that $\lim_{k\to\infty} \left(\mu_k^{(\lambda)}/\mu_k^{(\lambda+1)}\right)=0$.

Thus, upon passing to a subsequence, we will from now on assume that $\{X_k\}$ is jet-admissible at step $\lambda$, i.e.~ that equality
\eqref{mun} holds.

The following lemma proves the crucial part of the step of induction.
\begin{lm}\label{lm:step1}
If the inductive assumption at step $\lambda$ holds, then on any compact subset $K\subset C^{(\lambda+1)}$ that does not contain any nodes, the differentials $\mu_k^{(\lambda+1)}\Psi_k$ restricted to $K$  converge to $\Phi^{(\lambda+1)}|_K$.
\end{lm}
Before proving this essential lemma, we will analyze the behavior of  $\Phi^{(\lambda+1)}$ in a neighborhood of point $q_{-e}$, for $e\in E^{(\lambda)}$.

\begin{lm} \label{lm:onemore}
Suppose the inductive assumption at step $\lambda$ holds. Then there exists a constant $M$ such that for any $e\in E^{(\lambda)}$ and any $k$ the inequality
\begin{equation}\label{onemore}
\mu_k^{(\lambda+1)}|s_{e,k}|^{m_e+1}<M\mu_k^{(\lambda)}
\end{equation}
holds.
\end{lm}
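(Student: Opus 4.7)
The plan is to extract the needed inequality from a single coordinate of the vector $\calS_k^{(\lambda)}$, namely the leading Laurent coefficient of the balancing singular part $\sigma_{-e,k}$ at $q_{-e}$. By definition, $\mu_k^{(\lambda+1)}$ is chosen precisely so that $\mu_k^{(\lambda+1)}\calS_k^{(\lambda)}$ converges to a non-zero finite limit in the finite-dimensional vector space $\CC^{(m+1)\#E^{(\lambda)}+\sum_{p_\ell\in C^{(>\lambda)}}(m_\ell+1)}$, so every coordinate of $\mu_k^{(\lambda+1)}\calS_k^{(\lambda)}$ is bounded by some constant independent of $k$.

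First I would identify, for a given $e\in E^{(\lambda)}$, the unique level $\lambda_e\leq\lambda$ such that $q_e\in C^{(\lambda_e)}$. By the inductive assumption at step $\lambda$, combined with lemma~\ref{lm:comparePsis}, the rescaled differentials $\mu_k^{(\lambda_e)}\Psi_k^{(\le\lambda)}$ converge on any compact subset of $C^{(\lambda_e)}\setminus\{\text{nodes}\}$ to $\Phi^{(\lambda_e)}$; in particular, the Laurent coefficients at $q_e$ converge, so $\mu_k^{(\lambda_e)}u_{j,e,k}$ converges to the $z_e^j\,dz_e$ coefficient of $\Phi^{(\lambda_e)}$ at $q_e$. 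Since that differential has order of vanishing exactly $m_e$ at $q_e$, the $j=m_e$ coefficient of the limit is non-zero, and thus there exists $c>0$ such that $\mu_k^{(\lambda_e)}|u_{m_e,e,k}|\geq c$ for all $k$ sufficiently large.

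Next I would read off from~\eqref{sigma-ek} that the coefficient of $z_{-e}^{-m_e-2}\,dz_{-e}$ in $\sigma_{-e,k}$ is $-s_{e,k}^{\,m_e+1}u_{m_e,e,k}$, which appears as a coordinate of $\calS_k^{(\lambda)}$. Thus there is a constant $M'$ with
\[
\mu_k^{(\lambda+1)}\,|s_{e,k}|^{m_e+1}\,|u_{m_e,e,k}|\;\leq\;M'
\]
for all $k$. Combining this with the lower bound $|u_{m_e,e,k}|\geq c/\mu_k^{(\lambda_e)}$ yields
\[
\mu_k^{(\lambda+1)}\,|s_{e,k}|^{m_e+1}\;\leq\;\frac{M'}{c}\,\mu_k^{(\lambda_e)}.
\]
Finally, since remark~\ref{rm:march} asserts $\lim_{k\to\infty}\mu_k^{(\lambda')}/\mu_k^{(\lambda'+1)}=0$ for every $\lambda'$, the scales $\mu_k^{(\lambda')}$ are eventually monotone increasing in $\lambda'$, so $\mu_k^{(\lambda_e)}\leq\mu_k^{(\lambda)}$ for $k$ large. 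Absorbing the finitely many exceptional values of $k$ into the constant, and taking the maximum of the resulting constants over the finitely many $e\in E^{(\lambda)}$, gives the required $M$.

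The only delicate point is the lower bound $\mu_k^{(\lambda_e)}|u_{m_e,e,k}|\geq c$, which requires knowing that the leading Laurent coefficient of the limit $\Phi^{(\lambda_e)}$ at $q_e$ is genuinely non-zero; this is precisely the content of defining $m_e$ as the exact order of vanishing, so the argument is self-consistent. The rest is bookkeeping of the three scales $\mu_k^{(\lambda_e)}$, $\mu_k^{(\lambda)}$, and $\mu_k^{(\lambda+1)}$ and matching them against the factor $|s_{e,k}|^{m_e+1}$ coming from the balancing rule.
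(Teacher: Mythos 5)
Your proposal is correct and follows essentially the same route as the paper's proof: both extract the bound from the fact that jet-convergence makes the scaled coefficient $\mu_k^{(\lambda+1)}s_{e,k}^{m_e+1}u_{m_e,e,k}$ (the leading Laurent coefficient of the balancing singular part $\sigma_{-e,k}$) convergent, and then divide against the non-vanishing scaled limit of $u_{m_e,e,k}$ coming from $\ord_{q_e}\Phi=m_e$. The only difference is that you keep track of the stratum $C^{(\lambda_e)}$ actually containing $q_e$ and then invoke the monotonicity of the scales from remark~\ref{rm:march}, whereas the paper works directly at scale $\mu_k^{(\lambda)}$ (implicitly as if $q_e\in C^{(\lambda)}$); this is a mild and welcome refinement of the same argument, not a different one.
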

\begin{proof}
Let $u_{j,e}:=\lim_{k\to\infty} \mu_k^{(\lambda)}u_{j,e,k}$ be the limits of the scaled coefficients of the Taylor expansions of $\Psi_k^{(\le\lambda)}$ at $q_e$, if such exist. Since $\ord_{q_e}\Phi^{(\lambda)}=m_e$ by definition, the limit $u_{m_e,e}$ exists and is non-zero, while $u_{j,e}=0$ for any $0\le j<m_e$. Jet-convergence at step $\lambda$ means that there exists the scale-$\mu^{(\lambda+1)}$ limit of the singular part $\sigma_{-e,k}$. Thus for all $e\in E^{(\ell)}$ and $-1\le j\le m-1$ (where we denote $u_{-1,e,k}:=-ic_{e,k}$) there exist limits $\lim \mu_k^{(\lambda+1)}u_{-j-2,-e,k} =-\lim \mu_k^{(\lambda+1)} s_{e,k}^{j+1}u_{j,e,k}$ --- where we have used the balancing condition. In particular, for $j=m_e$ there exists a finite limit
\begin{equation}\label{eq:limit}
a:=\lim_{k\to\infty} \mu_k^{(\lambda+1)} s_{e,k}^{m_e+1}u_{m_e,e,k}
\end{equation}
of this sequence.
Since the limit $u_{m_e,e}=\lim\mu_k^{(\lambda)}u_{m_e,e,k}$ also exists and is finite and non-zero, the ratio of these two sequences, which is  $\mu_k^{(\lambda+1)} s_{e,k}^{m_e+1}\left(\mu_k^{(\lambda)}\right)^{-1}$, tends to the finite non-zero limit $a/u_{m_e,e}$, and in particular is bounded above by some constant $M$ independent of $k$.
\end{proof}
We can extend this analysis to bound the pole order of $\Phi^{(\lambda+1)}$ at $q_{-e}$, which will be used below. Recall that we have denoted $m_e=\ord_{q_e}\Phi^{(\lambda)}$.
\begin{lm}\label{lm:polebound}
Suppose the inductive assumption at step $\lambda$ holds. Then for any $e\in E^{(\lambda)}$
\begin{equation} \label{ordsigma}
\ord_{q_{-e}}\Phi^{(\lambda+1)} \ge -m_e-2.
\end{equation}
\end{lm}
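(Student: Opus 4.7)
The plan is to analyze directly the singular part $\sigma_{-e,k}$ of $\Psi_k^{(>\lambda)}$ at $q_{-e}$, which by the balancing formula \eqref{sigma-ek} is given explicitly in terms of the Taylor coefficients $u_{j,e,k}$ of $\Psi_k^{(\le\lambda)}$ at $q_e$. By construction (and Theorem~\ref{thm:limits} applied to the rescaled jet curves $X_k^{(>\lambda)}$), the singular part of $\Phi^{(\lambda+1)}$ at $q_{-e}$ is exactly $\lim_{k\to\infty}\mu_k^{(\lambda+1)}\sigma_{-e,k}$. Thus to prove \eqref{ordsigma} it suffices to show that the coefficient of $z_{-e}^{-j-2}dz_{-e}$ in $\mu_k^{(\lambda+1)}\sigma_{-e,k}$, namely $-\mu_k^{(\lambda+1)}s_{e,k}^{j+1}u_{j,e,k}$, tends to zero as $k\to\infty$ for every $j>m_e$.

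The next step is to control the growth rates of the $u_{j,e,k}$. By Lemma~\ref{lm:comparePsis} the rescaled differentials $\mu_k^{(\lambda)}\Psi_k^{(\le\lambda)}$ converge on any compact subset of $C^{(\le\lambda)}$ away from the nodes to $\Phi^{(\lambda)}$, and since both sides are meromorphic with simple poles of controlled residue at $q_e$, standard complex-analytic arguments on thin annuli around $q_e$ promote this to convergence of individual Laurent coefficients: $\mu_k^{(\lambda)}u_{j,e,k}\to u_{j,e}$ for each $j\ge -1$, where $u_{j,e}=0$ for $-1\le j<m_e$ and $u_{m_e,e}\ne 0$.

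The decisive step is then to factor, for $j>m_e$,
\begin{equation*}
\mu_k^{(\lambda+1)}s_{e,k}^{j+1}u_{j,e,k}=\frac{\mu_k^{(\lambda+1)}|s_{e,k}|^{m_e+1}}{\mu_k^{(\lambda)}}\cdot\frac{s_{e,k}^{j+1}}{|s_{e,k}|^{m_e+1}}\cdot\mu_k^{(\lambda)}u_{j,e,k}.
\end{equation*}
Lemma~\ref{lm:onemore} bounds the first factor by a constant $M$; the second factor has absolute value $|s_{e,k}|^{j-m_e}$ which tends to zero since $j-m_e\ge 1$ and $s_{e,k}\to 0$; the third factor converges to the finite limit $u_{j,e}$. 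So the product tends to zero, as required.

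The main potential obstacle is the promotion from uniform convergence on compacts away from $q_e$ to convergence of each individual Laurent coefficient $\mu_k^{(\lambda)}u_{j,e,k}$; this requires only that the pole orders of the $\mu_k^{(\lambda)}\Psi_k^{(\le\lambda)}$ at $q_e$ be uniformly bounded (which they are, equal to $1$ by definition of $\Psi_k^{(\le\lambda)}$) so that residue/contour-integral formulas for Taylor coefficients on a fixed small circle $|z_e|=\epsilon$ apply and commute with the limit. Once this is established the rest of the argument is a direct computation using the two preceding lemmas.
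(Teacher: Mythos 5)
Your proposal is correct and follows essentially the same route as the paper: both rest on the balancing condition~\eqref{sigma-ek}, the boundedness (convergence) of the scaled coefficients $\mu_k^{(\lambda)}u_{j,e,k}$ coming from lemma~\ref{lm:comparePsis}, and the scale comparison $\mu_k^{(\lambda+1)}|s_{e,k}|^{m_e+1}\le M\mu_k^{(\lambda)}$ of lemma~\ref{lm:onemore}. The only cosmetic difference is that you show directly that each coefficient $\mu_k^{(\lambda+1)}s_{e,k}^{j+1}u_{j,e,k}$ with $j>m_e$ tends to zero, whereas the paper phrases the same computation as a contradiction via the ratio of the limits $a$ and $b$.
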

\begin{proof}
Continuing in the setup of the proof of the previous lemma, note that since there exists a scale-$\mu^{(\lambda)}$ limit of $\Psi_k^{(\le\lambda)}$ on $C^{(\lambda)}$, which by lemma~\ref{lm:comparePsis} is equal to $\Phi^{(\lambda)}$, it follows that for any $m_e<j<m$ the coefficients $\mu_k^{(\lambda)} u_{j,e,k}$ are bounded independent of $k$. Hence, there exist a $k_0$ and a constant $M$ such that  for all $k>k_0$ the inequality
\begin{equation}\label{M1}
|u_{m_e,e,k}/u_{j,e,k}|> M_1
\end{equation}
holds.

Suppose now for contradiction that the pole order of $\Phi^{(\lambda+1)}$ at $q_{-e}$ is higher that $m_e+2$, i.e.~that for some $m_e<j<m$ the scale-$\mu^{(\lambda+1)}$ limit of $u_{-j-2,e,k}$ is non-zero. By the balancing condition, this is equivalent to the limit
$$
  b:=\lim_{k\to\infty} \mu_k^{(\lambda+1)} s_{e,k}^{j+1}u_{j,e,k}
$$
being non-zero. Dividing equation~\eqref{eq:limit} by this limit and using the bound~\eqref{M1} yields for the absolute value
$$
  \left|\frac {a}{b}\right|=\lim \left|\frac{\mu_k^{(\lambda+1)} s_{e,k}^{m_e+1}u_{m_e,e,k}}{\mu_k^{(\lambda+1)} s_{e,k}^{j+1}u_{j,e,k}}\right|=\lim \left|\frac{u_{m_e,e,k}} {u_{j,e,k}} s_{e,k}^{m_e-j}\right|>M_1\left|s_{e,k}\right|^{m_e-j}=\infty,
$$
which is a contradiction.
\end{proof}
We can now show that the divisor of zeroes of $\Phi$ indeed does not include nodes with negative coefficients.
\begin{cor}
The divisor of zeroes of the twisted limit differential is a linear combination of points of the nodal curve with non-negative coefficients.
\end{cor}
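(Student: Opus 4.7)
The plan is to verify non-negativity of the multiplicity at each of the three kinds of points in the divisor (cf.~definition~\ref{zeroestwisted}) separately. Interior zeros of the $\PP\Phi^v$ contribute their orders of vanishing, which are positive by definition, so nothing more is needed there.

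For a node $q_{|e|}$ I would check that $\ord_{q_e}\PP\Phi^{v(e)}+\ord_{q_{-e}}\PP\Phi^{v(-e)}+2\ge 0$ by cases according to the levels of the two adjacent components in the order-of-vanishing stratification. If both lie in a common level $C^{(\lambda)}$, then by theorem~\ref{thm:limits} the differential $\Phi^{(\lambda)}$ has at worst a simple pole at the node on either side (the residues at internal nodes of $C^{(\lambda)}$ come from the multi-scale Kirchhoff solution), so each $\ord$-term is $\ge-1$ and the sum plus $2$ is non-negative. If the two components sit at adjacent levels $\lambda$ and $\lambda+1$, the inequality is exactly the content of lemma~\ref{lm:polebound}. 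For non-adjacent levels $\lambda<\lambda'-1$ I would re-run the dominant-coefficient contradiction from the proof of lemma~\ref{lm:polebound} with $\Psi_k^{(\le\lambda'-1)}$ in place of $\Psi_k^{(\le\lambda)}$: a per-component refinement of lemma~\ref{lm:comparePsis} (immediate from the ARN-norm bound~\eqref{newbound}) gives $\lim_k\mu_k^{(\lambda)}\Psi_k^{(\le\lambda'-1)}|_{C^{v(e)}}=\Phi^{(\lambda)}|_{C^{v(e)}}$, so the Taylor coefficients of $\Psi_k^{(\le\lambda'-1)}$ at $q_e$ have the same leading order $m_e=\ord_{q_e}\Phi^{(\lambda)}$, and the argument that a pole of order exceeding $m_e+2$ at $q_{-e}$ would force a non-vanishing sub-leading coefficient then goes through verbatim.

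At a marked point $p_\ell\in C^v\subset C^{(\lambda)}$ the multiplicity $m_\ell+1-\ord_{p_\ell}\PP\Phi^v$ is non-negative for the following reason. When $\lambda=0$, $\PP\Phi^v=\Psi|_{C^v}$ retains the prescribed pole of order $m_\ell+1$, so $\ord_{p_\ell}\PP\Phi^v=-(m_\ell+1)$ and the multiplicity equals $2(m_\ell+1)\ge 0$. When $\lambda>0$, the factor $\mu_k^{(\lambda)}\to 0$ kills the prescribed singular part in the limit, so $\PP\Phi^v$ is regular at $p_\ell$ or has a zero there, i.e.~$\ord_{p_\ell}\PP\Phi^v\ge 0$; the order of vanishing is forced to be $\le m_\ell+1$ by a dominant-coefficient analysis analogous to that of lemmas~\ref{lm:onemore} and~\ref{lm:polebound}, once one compares the scale $\mu_k^{(\lambda)}$ with the Taylor coefficients of $\Psi_k$ at $p_\ell$ (whose leading behavior is determined by the prescribed singularity).

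The principal obstacle in carrying out this plan is the uniform per-component control of the relevant Taylor coefficients across several levels of the stratification, needed both for the non-adjacent-level case at nodes and for the high-level case at marked points. Beyond that, the argument is essentially bookkeeping, combining proposition~\ref{ARNPJP}, the per-component strengthening of lemma~\ref{lm:comparePsis}, and the inductive structure of the scales $\mu_k^{(\lambda)}$ from definition~\ref{df:jetconv}.
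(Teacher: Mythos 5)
Your treatment of the nodes is correct and is essentially the paper's own argument: the same-level case follows because the limit RN differential has at most simple poles at the nodes, and the cross-level case is lemma~\ref{lm:polebound}; your explicit re-run of that lemma's dominant-coefficient argument at step $\lambda'-1$, using the per-component strengthening of lemma~\ref{lm:comparePsis} (which the paper states and proves right after that lemma), is exactly what the paper's terse citation of lemma~\ref{lm:polebound} implicitly requires when the two components are at non-adjacent levels. The one caveat is your marked-point discussion: in definition~\ref{zeroestwisted} the quantity $\ord_{p_\ell}\PP\Phi^v$ must be read as the pole order of $\PP\Phi^v$ at $p_\ell$ (this is forced by the pole-order-difference count in the proof of theorem~\ref{thm:zeroes}), so the multiplicity at $p_\ell$ is $m_\ell+1$ minus that pole order and is non-negative simply because the prescribed singular part of the twisted differential at $p_\ell$ has order at most $m_\ell+1$; reading $\ord$ as the valuation leads you both to the spurious count $2(m_\ell+1)$ at level zero and to the claim that for higher levels a zero of $\PP\Phi^v$ at $p_\ell$ has order at most $m_\ell+1$, which your dominant-coefficient sketch does not establish and which is in fact not needed.
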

\begin{proof}
By definition the statement means that we need to prove that $\ord_{q_e}\Phi^{v(e)}+\ord_{q_{-e}}\Phi^{v(-e)}\ge -2$ for every node $e$.
For a node $e$ such that there exists an $\ell$ such that $C^{v(e)},C^{v(-e)}\subset C^{(\lambda)}$, the twisted differential $\Phi^{(\lambda)}$ is a limit RN differential, and thus has at most simple poles at the nodes, so the inequality is immediate. For the other nodes there exists an $\lambda$ such that $q_e\in C^{(\lambda)}$ while $q_{-e}\not\in C^{(\le\lambda)}$. In this case lemma~\ref{lm:polebound} gives precisely the required inequality.
\end{proof}
We can now give the proof of the main technical lemma
\begin{proof}[Proof of lemma~\ref{lm:step1}]
Together, the differentials $\Psi_k^{(\le\lambda)}$ and $\Psi_k^{(>\lambda)}$ define a differential on the smoothing of $C_{u_k,\underline{0}}$ at all the nodes $E\setminus E^{(\lambda)}$, which is then real-normalized on all cycles not passing through nodes in $E^{(\lambda)}$.  We thus consider the \RHP with {\em zero jumps} at all seams corresponding to internal nodes $e\in I_{C^{(\le\lambda)}}$ and $e\in I_{C^{(>\lambda)}}$, and with the jump on seam $\gamma_e$ for any $e\in E^{(\lambda)}$ given by
\begin{equation}\label{eq:init}
  \left.\Psi_k^{(\le\lambda)}\right|_{\gamma_e}-I_e^*\left(\left.\Psi^{(>\lambda)}_k\right|_{\gamma_{-e}}\right),
\end{equation}
and let $\omega_k^{(\lambda)}$ be the ARN solution of this \RHP on $C_k$. Then the difference
$$
  \nu:=\left(\Psi^{(\le\lambda)}_k\sqcup \Psi^{(>\lambda)}_k\right)-\omega^{(\lambda)}_k
$$
is a differential on $C^{(\le\lambda)}_k\sqcup C^{(>\lambda)}_k$ that also has no jumps on any seam $\gamma_e$ for any $e\in E^{(\lambda)}$. Thus $\nu$ is a well-defined differential on the plumbed curve $C_k$, satisfying the following properties: its only singularities are at $p_\ell$, with the singular parts prescribed by the coordinates $w_k$ (the singularities at $q_e$ for $e\in E^{(\lambda)}$ are cut out by plumbing); $\nu$ has real integral over any cycle on $C_k$ not intersecting the seams
$\gamma_e$ for $e\in E^{(\lambda)}$; and by definition of the residues of $\Psi_k^{(\le\lambda)}$ at $q_e$the integral of $\nu$ over any seam $\gamma_e$ for $e\in E^{(\lambda)}$ is equal to $2\pi c_{e,k}$, which is the integral of $\Psi_k$ over that seam (the integral of the ARN solution over the seam is zero). Therefore, $\Psi_k-\nu$ is a holomorphic differential on $C_k$ such that all its periods over the cycles not intersecting seams $\gamma_e, e\in E^{(\lambda)}$ are real, and the periods over these seams are zero. Verbatim the same argument that proves uniqueness of $\Psi_X(\uc)$, by applying the Stokes' theorem, shows that such a differential is zero, and thus it follows that $\nu$ is equal to $\Psi_k$.

The advantage of this construction of~$\Psi_k$ over the one used in section 6 is in that the balancing condition~\eqref{sigma-ek} gives a much better upper bound for the initial data of the jump problem whose solution is $\omega_k^{(\lambda)}$. Indeed, for any $e\in E^{(\lambda)}$ let us define holomorphic differentials $f_{e,k}$ on $V_e$ and $f_{-e,k}$ on $V_{-e}$ by
\begin{equation}\label{fek}
  f_{e,k}:=\left.\Psi^{(\le\lambda)}_k\right|_{V_e}-J_e(\Psi^{(\le\lambda)}_k)=\sum_{j=m}^\infty u_{j,e,k} z_{e}^j dz_e,
\end{equation}
where $u_{j,e,k}$ denote the coefficients of the Taylor series expansion of $\Psi^{(\le\lambda)}_k$ at $q_e$, and respectively
\begin{equation}\label{f-ek}
  f_{-e,k}:=\Psi^{(>\lambda)}_k-\sigma_{-e,k}=\sum_{j=0}^\infty u_{j,-e,k}z_{-e}^j dz_{-e},
\end{equation}
where $u_{j,-e,k}$ denote the coefficients of the Laurent series expansion of $\Psi^{(>\lambda)}_k$ at $q_{-e}$.

From the balancing condition~\eqref{sigma-ek} it follows that
$$
  \left.\Psi^{(\le\lambda)}_k\right|_{\gamma_e}-I_e^*\left(\left.\Psi^{(>\lambda)}_k\right|_{\gamma_{-e}}\right)=
  \left.f_{e,k}\right|_{\gamma_e}-I_e^*\left(\left.f_{-e,k}\right|_{\gamma_{-e}}\right).
$$
Since $f_{e,k}$ is holomorphic in $V_e$, and $ f_{-e,k}$ is holomorphic in $V_{-e}$, we can apply proposition~\ref{ARNPJP} to bound the ARN solution $\omega^{(\lambda)}_k$ of the \RHP posed above.

Since $C^v\setminus\lbrace V_e\rbrace_{e\in E_v}$ is a compact subset of $C^v\setminus\lbrace q_e\rbrace_{e\in E_v}$, the sequence of differentials $\mu_k^{(\lambda)}\Psi_k^{(\le\lambda)}$ on it converges. Thus each individual term of their Taylor expansions also converges, and thus $\mu_k^{(\lambda)}f_{e,k}$, being the sum of all the terms of the Taylor expansion, starting from the $m$-th term, converges on $C^v\setminus\lbrace V_e\rbrace_{e\in E_v}$. In particular, $\mu_k^{(\lambda)}f_{e,k}$ converge on the circle $\lbrace |z_e|=1\rbrace=\partial V_e$ --- which is where we need to take the norm to apply the bound of proposition~\ref{ARNPJP}. From~\eqref{fek}  we see that for any $e\in E^{(\lambda)}$ the differential $\mu_k^{(\lambda)}f_{e,k}$ has a zero at $q_e$ of order at least $m$. Therefore,  there exists a constant $M$ independent of sufficiently small $w,u,\us$ (so that it works for all sufficiently large $k$) such that
$$
  |f_{e,k}|_{\us}< M\left(\mu_k^{(\lambda)}\right)^{-1} \max_{e\in E^{(\lambda)}}|s_{e,k}|^{(m+1)/2}.
$$
We now recall that by lemma~\ref{mem} $m_e\le m_0<m/2$ for any $e$, so that $|s_{e,k}|^{(m+1)/2}<|s_{e,k}|^{m_e+1/2}$ for any $k$ sufficiently large. Using this inequality for the right-hand-side above, and multiplying this bound by $|\mu_k^{(\lambda+1)}|$ yields
\begin{equation}\label{bound-fek}
  \mu_k^{(\lambda+1)} |f_{e,k}|_\us< M \max_{e\in E^{(\lambda)}} |s_{e,k}|^{1/2},
\end{equation}
by applying the bound for $\mu_k^{(\lambda+1)}/\mu_k^{(\lambda)}$ given by lemma~\ref{lm:onemore}.

We now bound $f_{-e,k}$. By assumption of jet-convergence at step $\lambda$, the differentials $\mu_k^{(\lambda+1)}\Psi_k^{(>\lambda)}$ converge to $\Phi^{(\lambda+1)}$. Since $\mu_k^{(\lambda+1)} f_{-e,k}$ is the regular part of $\mu_k^{(\lambda+1)}\Psi_k^{(>\lambda)}$, these also converge, and thus there exists a constant $M_2$ independent of sufficiently small $\us$ and sufficiently large $k$ such that
\begin{equation}\label{bound-f-ek}
  \mu_k^{(\lambda+1)}|f_{-e,k}|_{\us}<M_2 \max_{e\in E^{(\lambda)}}|s_{e,k}|^{1/2}
\end{equation}

Using for the initial data $\mu_k^{(\lambda+1)}(f_{e,k}-I_e^*(f_{-e,k}))$ of the jump problem the upper bounds provided by~\eqref{bound-fek},\eqref{bound-f-ek}, proposition~\ref{ARNPJP} finally yields the existence of a constant $M_3$ such that
\begin{equation}\label{boundomegaelll}
  \mu_k^{(\lambda+1)}||\omega^{(\lambda)}||_{C_k}<M_3|\us_k|^{1/2}
\end{equation}
Since the limit of $\mu_k^{(\lambda+1)}\Psi_k^{(>\lambda)}$  is finite and non-zero on any component $C^v\subset C^{(\lambda+1)}$, it follows that for any compact $K\subset C_k^{(>\lambda)}$ we have
$$
\lim_{k\to\infty} \mu_k^{(\lambda+1)}\Psi^k|_K=\lim_{k\to\infty} \mu_k^{(\lambda+1)}\left(\Psi_k^{(>\lambda)}-\omega_k^{(\lambda)}\right)|_K=$$
$$ \lim_{k\to\infty}\mu_k^{(\lambda+1)}\Psi_k^{(>\lambda)}|_K=\Phi^{(\lambda+1)}|_K.$$
\end{proof}
This lemma completes the proof of the inductive step. Indeed, lemma~\ref{lm:step1} shows that if the inductive assumption holds at step $\lambda$, then there exists a scale $\mu_k^{(\lambda+1)}$ such that the inductive assumption holds, for a suitable subcurve, also at step $\lambda+1$. In particular we have proven the following
\begin{cor}\label{cor:subjet}
Any sequence $\{X_k\}$ that is jet-convergent at step $\lambda$ has a subsequence that is jet-convergent at step $\lambda+1$.
\end{cor}
Since the order of vanishing stratification is finite, to complete the proof of the theorem it remains to determine the limits of zeroes of $\Psi_k$. Away from the nodes of $C$, clearly the limits of zeroes of $\Psi_k$ are the same as the limits of zeroes of $\mu_k^{(\lambda)}\Psi_k$, which are simply the zeroes of $\PP\Phi$, counted with multiplicity. We thus need to show that a node $q_{|e|}$ is the limit of
$\ord_{q_e}\Phi^{v(e)}+\ord_{q_{-e}}\Phi^{v(-e)}+2$ zeroes of  $\Psi_k$, counted with multiplicity.

Denote $K_{e,k}$ the compact set $(V_e\setminus U_e^{s_{e,k}})\sqcup (V_{-e}\setminus U_{-e}^{s_{e,k}})/(\gamma_e\sim\gamma_{-e})$, where the seams are identified via $I_e$, as usual. The differentials $dz_e$ in $V_{-e}\setminus U_{-e}^{s_{e,k}}$ and $s_{e,k}z_{-e}^{-2}dz_{-e}$ in $V_{-e}\setminus U_{-e}^{s_{e,k}}$ match each other on the seam $\gamma_e$ and hence define a holomorphic differential $\zeta_{e,k}$ on $K_{e,k}$ that is nowhere zero. Hence the number of zeros in $K_{e,k}$ of the differential $\Psi_k$ is equal to the number of zeros in $K_{e,k}$ of the function $\Psi_k/\zeta_{e,k}$. By the argument principle the latter is equal to the integral of $(2\pi i)^{-1}d\ln (\Psi_k/\zeta_{e,k})$ over the boundary $\partial K_{e,k}$, which is the union of the circles $|z_e|=1$ and $|z_{-e}|=1$, with opposite orientations. On these circles $\PP\Psi_k$ converges to $\PP\Phi^{v(e)}$ and $\PP\Phi^{v(-e)}$ respectively, and thus the integrals of $d\ln(\Psi_k/\zeta_{e,k})$ over them converge to the integrals of $d\ln(\Phi^{v(e)}/dz_e)$ and $d\ln(z_{-e}^2\Phi^{v(-e)}/dz_{-e})$, respectively. Thus the total number of zeroes of $\Psi_k$ within $K_{e,k}$, for $k$ sufficiently large, is equal to the total number of zeroes and poles of $\Phi^{v(e)}$ in $V_e$ plus the total number of zeroes and poles of $z_{-e}^2\Phi^{v(-e)}$ in $V_{-e}$, all counted with multiplicity. Since the only zeroes or poles of $\Phi$ in these neighborhoods are at the origins, the statement about the multiplicity follows.

The computation of the multiplicity of $p_\ell$ as the limit of zeroes of $\Psi_k$ is straightforward --- the point $p_\ell$ is a smooth point of the component $C^v$ that contains it, and thus the multiplicity of it as the limit of zeroes of $\Psi_k$ is precisely the difference of the pole orders of $\Psi_k$ and $\Phi$ at $p_\ell$.

Our main theorem~\ref{thm:zeroes} on limits of zeroes of RN differentials is thus proven.
\end{proof}

\begin{cor}\label{cor:cpt}
Suppose that for a sequence of smooth jet curves $\lbrace X_k\rbrace$ converging to a stable jet curve $X$, the limit of zeroes of RN differentials exists. Then the limits of zeroes are the divisor of zeroes of the twisted limit differential for some jet-convergent sequence.
\end{cor}
\begin{proof}
As argued in the proof of proposition~\ref{prop:compactness}, compactness of $\PS[\#E-1]$ implies that any sequence $\lbrace X_k\rbrace$ has an admissible subsequence. Furthermore, corollary~\ref{cor:subjet} shows that there exists a jet-convergent subsequence of this admissible subsequence. By theorem~\ref{thm:zeroes} the limit of zeroes of the RN differentials corresponding to this jet-convergent subsequence is the divisor of zeroes of the twisted limit differential in this subsequence. Since the limit of zeroes exists for the whole sequence, it must then be equal to the limit of zeroes in this subsequence.
\end{proof}

\appendix
\section[A]{$m$-balanced approximation}
As we see in the proof of the main theorem~\ref{thm:zeroes} on limits of zeroes of RN differentials, the main motivation for introducing the balancing condition~\ref{sigma-ek} for the jet of the differential at $q_e$ and the singular part at $q_{-e}$ is to ensure that the jump is sufficiently small, so that proposition~\ref{ARNPJP} applies, and yields a bound for the ARN solution. This bound shows that for~$\us$ sufficiently small this ARN solution is smaller  than the differentials themselves. In this appendix we develop this idea into a general notion of $m$-balanced approximations. The balancing construction used in the previous section will correspond to the case $m=0$ of this more general construction While not used for the proof of our main results, the notion of an $m$-balanced approximations for $m>0$ gives a general framework for future works aimed at understanding more precisely the asymptotic behavior of meromorphic differentials under degeneration.

For a fixed $m\in\ZZ_{>0}$, we continue with the notation of the previous section. Denote by $W$ the set of all collections of RN meromorphic differentials $\Phi=\lbrace \Phi^v\rbrace$ on $C^v$ that have poles of order up to $m_\ell+1$ at each marked point $p_\ell$, are holomorphic away from the marked points and the preimages of the nodes, and have poles of order at most $m+1$ at the preimages of all nodes, with opposite residues at $q_e$ and $q_{-e}$ for any $e$. We note that $W$ is a finite-dimensional real vector space, and for further use denote $W_0\subset W$ the vector subspace of those differentials that are regular at all $p_\ell$, and have zero residue at any node (while still allowed to have a higher order pole there). For $\Phi\in W$ we denote by $u_{j,e}$ the coefficients of the Laurent series of $\Phi^{v(e)}$ at $q_e$, denote by $\sigma_e$ the singular part, and by $J_e$ the $m$-jet of the holomorphic part of $\Phi^{v(e)}$ plus the polar term of order $-1$.

For a jet curve $X=X_{w,u,s}$ we continue to denote by $\uc=\uc(X)$ the sum of the series as constructed in proposition~\ref{prop:PsiR}, i.e.~the values such that $\Psi_X=\Psi_X(\uc(X))$.
The $m$-balanced approximation is then defined as follows.
\begin{df}\label{df:nondegenerate}
For a given $X$ and for a fixed integer $m\ge 0$,  an element $\Phi_{X}[m]=\lbrace \Phi[m]^v\rbrace\in W$  is called an {\em $m$-balanced approximation} if the following conditions hold.
\begin{itemize}
\item At each point $p_\ell$ the singular part of $\Phi[m]$ is as prescribed by the coordinates $w$.
\item At any preimage $q_e$ of any node, the residue of $\Phi[m]^{v(e)}$ is equal to $\Res_{q_e}\Phi[m]^{v(e)}=ic_e$.
\item At any preimage $q_{-e}$ of any node, the singular part $\sigma_{-e}$ of $\Phi[m]^{v(-e)}$ is equal to the pullback under $I_e^*$ of the $m$-jet $J_e$ of $\Phi[m]^{v(e)}$ at $q_{e}$:
    $$\sigma_{-e}=I_e^*(J_{e}(\Phi[m]^{v(e)})),$$
    which is the balancing condition explicitly written in equation~\ref{sigma-ek}.
\end{itemize}
\end{df}
We note that the last condition prescribes both $\sigma_{-e}$ in terms of the jet of $\Phi[m]^{v(e)}$ and, by choosing $q_{-e}$ instead, also the singular part $\sigma_e$ in terms of the jet of $\Phi[m]^{v(-e)}$. Thus the existence of $m$-balanced approximations cannot be argued by constructing them starting from some component, and then proceeding to define them explicitly on the adjoining component. We thus first need to prove that $m$-balanced approximations exist. To prove this, one could first argue uniqueness as we do below, and then deduce the existence by noticing that conditions imposed on an $m$-balanced approximation are a system of non-homogeneous linear equations on the singular parts, which must then have a solution. However, for possible applications it is important to be able to compute the $m$-balanced approximation, and we thus give a proof by an explicit construction of the approximation as a sum of a recursively defined series.
\begin{prop}\label{approx1}
For any fixed $m$, there exists a constant $t_m$ such that for any $|w|,|u|,|\us|<t_m$ there exists a unique $m$-balanced approximation $\Phi_{X}[m]$.
\end{prop}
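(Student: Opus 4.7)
The approach is to construct $\Phi_X[m]$ as a convergent series $\sum_{l=0}^\infty \Phi^{(l)}$ in the finite-dimensional real vector space $W$, mimicking the construction of the RN differential in plumbing coordinates in Section~\ref{sec:RNplumbing}. The key observation driving the construction is that the balancing condition~\eqref{sigma-ek} expresses the coefficient of $z_{-e}^{-j-2}dz_{-e}$ in $\sigma_{-e}$ as $-s_e^{j+1}u_{j,e}$, carrying an explicit factor $s_e^{j+1}$ for $j\ge 0$; thus if $\Phi^{(l)}$ has prescribed singular parts of some given size, the balancing mismatch it produces at the higher-order polar parts is smaller by a factor of $|\us|$, and can be corrected by adding a term $\Phi^{(l+1)}$ of this smaller size.

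Concretely, I would first define $\Phi^{(0)} \in W$ as the unique collection of RN meromorphic differentials on the normalizations $C^v$ with singular parts $\sigma_\ell$ at the marked points $p_\ell$ (prescribed by $w$), simple poles of residue $ic_e$ at each preimage $q_e$ of a node, and holomorphic elsewhere; the existence and uniqueness of $\Phi^{(0)}$ is guaranteed by condition (1) of the Kirchhoff problem for $\uc(X)$, which ensures that the sum of residues on each $C^v$ vanishes. Denoting by $u^{(l)}_{j,e}$ the $j$-th Taylor coefficient of $\Phi^{(l),v(e)}$ at $q_e$, I would then recursively define, for $l\ge 0$,
\begin{equation*}
\sigma_{-e}^{(l+1)} := -\sum_{j=0}^{m-1} u^{(l)}_{j,e}\, s_e^{j+1}\, z_{-e}^{-j-2}\, dz_{-e},
\end{equation*}
and let $\Phi^{(l+1)} \in W_0$ be the unique collection of RN differentials on each $C^v$ with these singular parts at the preimages of the nodes and no other singularities; such $\Phi^{(l+1)}$ exists and is unique since its prescribed residues all vanish. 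By design, the sum $\Phi[m] := \sum_l \Phi^{(l)}$ will have singular part at $q_{-e}$ equal to $ic_{-e}z_{-e}^{-1}dz_{-e}$ (from $\Phi^{(0)}$) plus $\sum_{l\ge 0}\sigma_{-e}^{(l+1)}$, which assembles to precisely $I_e^*(J_e(\Phi[m]^{v(e)}))$, verifying the balancing condition.

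For convergence I would establish a uniform estimate of the form $V^{(l+1)} \le K|\us|\, V^{(l)}$, where $V^{(l)} := \max_{e,\, 0\le j\le m-1} |u^{(l)}_{j,e}|$. This rests on the fact that the map sending prescribed singular parts of an RN differential on $C^v$ to its Taylor coefficients at any preimage of a node is bounded linear, with norm depending real-analytically on the moduli coordinate $u$, hence uniformly bounded by some constant $K$ for $|u|$ in a compact neighborhood of $0$. Combined with the explicit factor $s_e^{j+1}$ in $\sigma_{-e}^{(l+1)}$ this yields the desired contraction, so that for $|\us| < t_m := (2K)^{-1}$ the series $\sum_l \Phi^{(l)}$ converges geometrically in the finite-dimensional space $W$, giving existence. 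Uniqueness follows in parallel: the difference of two $m$-balanced approximations lies in $W_0$ with vanishing prescribed residues, and is a fixed point of the linear operator $T:W_0\to W_0$ sending $\Psi$ to the RN differential with singular parts $-\sum_{j=0}^{m-1} u_{j,e}(\Psi)\,s_e^{j+1}\,z_{-e}^{-j-2}\,dz_{-e}$ at each $q_{-e}$; since $\|T\|\le K|\us| < 1/2$, the only fixed point is zero.

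The main obstacle is establishing this uniform Taylor-coefficient bound --- that the constant $K$ may be taken independent of $u$ in some neighborhood of the nodal curve. This ultimately reduces to the real-analytic (in fact holomorphic) dependence of the RN differentials on the moduli of the smooth components $C^v_u$, which follows from the standard real-analytic dependence of the period matrix on moduli parameters. Once this uniform bound is in place, both existence and uniqueness follow from the same geometric-series contraction argument already used in the construction of $\Psi(\uc)$ in proposition~\ref{prop:PsiR}.
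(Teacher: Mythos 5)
Your proposal is correct and follows essentially the same route as the paper: the first term is $\Phi(\uc(X))$, the corrections in $W_0$ are defined recursively by balancing the $m$-jet of the previous term at $q_e$ (the paper packages this as the linear operator $R:W\to W_0$), the explicit factor $s_e^{j+1}$ together with the uniform (real-analytic in $u$) bound on the singular-parts-to-jets map gives the contraction estimate with factor $M_m|\us|$, and uniqueness follows from the same contraction applied to the difference of two approximations. The only differences are cosmetic (your norm is the max of jet coefficients rather than of singular parts).
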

\begin{proof}
Similarly to the construction of the RN differential in plumbing coordinates, we will prove the existence by constructing the approximation as a sum of a series $\Phi_{X}[m]=\sum_{l=0}^\infty \Phi^{(l)}$, now with the first term
$\Phi^{(0)}:=\Phi(\uc(X))$.

The further terms $\Phi^{(l)}$ for $l>1$ will lie in $W_0$, so that adding them to $\Phi^{(0)}$ does not change the singular parts of $\Phi$ at $p_\ell$ or the residues at the nodes. To define $\Phi^{(l)}$, we introduce a linear operator $R: W\to W_0$, which we think of as ``balancing'' the singular parts. An element of $W_0$ is prescribed by its singular parts at each $q_e$, and we define $R(\Phi)$ by prescribing its singular parts at each $q_{-e}$ to be
\begin{equation}\label{eq:RJ}
  s_e\sum_{j=0}^{m-1} s_e^j u_{j,e}z_{-e}^{-j-2}dz_{-e},
\end{equation}
where $u_{j,e}$ are the coefficients of the jet $J_e$ of $\Phi$ (note that this formula prescribes a singular part without residue, as required for an element of $W_0$).

We introduce a norm on $W$ by taking the maximum of norms of the singular parts. Let the linear operator $R':W\to W_0$ be defined by prescribing the singular parts to be $R'(\sigma)_{-e}=s_e^{-1}R(\sigma)_{-e}$, Since it depends smoothly on $u$ and the  collection of singular parts $R(\sigma)$ is obtained by multiplying the collection of singular parts $R'(\sigma)_{-e}$ by a diagonal matrix of $s_e$, it follows that there exists a constant $M_m$ such that
for any $\Phi\in W$ we have $|R(\Phi)|< |\us|\cdot|R'(\Phi)|\le M_m\cdot|\us|\cdot |\Phi|$.

We now define the terms of the series by setting $\Phi^{(l+1)}:=R(\Phi^{(l)})$ for any $l\ge 0$, so that the above bound shows
\begin{equation}\label{eq:sigmabound}
|\Phi^{(l+1)}|\le M_m\cdot |\us|\cdot|\Phi^{(l)}|
\end{equation}
for any $l$. Thus for $|\us|<M_m^{-1}$, the norms of the terms are bounded by a geometric sequence with ratio less than 1, and thus the series converge.

\medskip
To prove uniqueness of the approximation, suppose that $\Phi'$ is the difference of any two $m$-balanced approximations. Then $\Phi'\in W_0$ is a collection of RN differentials holomorphic at all the marked points, satisfying all the balancing conditions~\eqref{df:nondegenerate}. But then $R(\Phi')=\Phi'$ by definition, and thus it follows that $|\Phi'|\le M_m\cdot |\us|\cdot |\Phi'|$, which for $|\us|<M_m^{-1}$ is a contradiction unless $\Phi'$ is zero.
\end{proof}
By using techniques similar to the proof of lemma~\ref{mem} it can be shown that for $m>2m_0$ each meromorphic differential $\Phi[m]^v$ of an $m$-balanced approximation is not identically zero. To prove this, one proceeds inductively by the number of components of $C$, and uses the estimates similar to those in lemma~\ref{mem} to show that each $m$-jet $J_e(\Phi)$ is not identically zero --- which then implies that no singular part $\sigma_{-e}$ is identically zero, and thus no RN differential $\Phi[m]^v$ is identically zero.

Furthermore, consider the jump problem with initial data $$\left.\Phi[m]^{v(e)}\right|_{\gamma_e}-I_e^*\left(\left.\Phi[m]^{v(-e)}\right|_{\gamma_{-e}}\right).$$
Let $\omega_X[m]$ be the ARN solution of this jump problem, so that $\Phi_X[m]-\omega_X[m]$ then glues to define a differential on $C$, which is then easily seen to be equal to $\Psi_X$. The balancing condition then shows that the first $m$ terms of the initial data of the jump problem cancel (as in equations~\eqref{fek},\eqref{f-ek} in the proof of theorem~\ref{thm:limits}), and thus gives a bound for the ARN solution. As a result, one obtains the following
\begin{lm}\label{lm:limapprox}
For any $w,u,\us$ sufficiently small there exists a constant $M$ such that for any compact $K\subset C^v\setminus\cup_{e\in E_v}\lbrace q_e\rbrace$ the inequality
\begin{equation}\label{moder1}
  ||\Psi_X-\Phi_X[m]||_K< M|\us|^{(m+1)/2}
\end{equation}
holds.
\end{lm}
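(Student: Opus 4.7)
The plan is to identify the difference $\Psi_X - \Phi_X[m]$, restricted to any $\widehat C^v_\us$, as the negative of the ARN solution of an auxiliary \RHP whose initial data is forced to vanish to order $m$ on the seams by the balancing condition of Definition~\ref{df:nondegenerate}, and then to invoke Proposition~\ref{ARNPJP} to obtain the quantitative bound.

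First I would set up the \RHP on $\widehat C_{u,\us}$ with initial data
$$
\phi_e := \left.\Phi_X[m]^{v(e)}\right|_{\gamma_e} - I_e^*\!\left(\left.\Phi_X[m]^{v(-e)}\right|_{\gamma_{-e}}\right)
$$
on each seam $\gamma_e$, and let $\omega_X[m]$ denote its ARN solution, which exists uniquely by Proposition~\ref{prop:RH} for $|\us|$ sufficiently small. The collection of differentials $\Phi_X[m] - \omega_X[m]$ on the pieces $\widehat C^v_\us$ glues across every seam by construction, and so defines a meromorphic differential on the plumbed curve $C_{u,\us}$. Its singularities are only at the marked points $p_\ell$ with the prescribed singular parts (the higher-order polar terms of $\Phi_X[m]$ at the preimages of nodes lie inside the disks $U_e^{s_e}$ cut out by plumbing, and $\omega_X[m]$ is holomorphic). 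Because $\omega_X[m]$ has vanishing integral along each seam and real periods over $H_1(\widetilde C,\ZZ)$, and $\Phi_X[m]^{v(e)}$ has residue $ic_e$ at $q_e$ and is RN on each $C^v$, the integral of $\Phi_X[m] - \omega_X[m]$ along $\gamma_e$ equals $2\pi c_e$ and its periods over cycles not crossing the seams are real. The uniqueness argument underlying Lemma~\ref{lm:psiphi} then identifies this differential with $\Psi_X=\Psi_X(\uc)$, so that $\Psi_X-\Phi_X[m]=-\omega_X[m]$ on $\widehat C^v_\us$.

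Second I would exploit the balancing condition to expand $\phi_e$. Writing $\Phi_X[m]^{v(e)}=J_e+f_e$ near $q_e$ with $f_e=\sum_{j\ge m}u_{j,e}z_e^j\,dz_e$ the holomorphic tail of order $\ge m$ at $q_e$, and $\Phi_X[m]^{v(-e)}=\sigma_{-e}+f_{-e}$ near $q_{-e}$ with $f_{-e}$ the regular part, the direct pullback computation via $z_{-e}=s_e/z_e$ applied to equation~\eqref{sigma-ek} shows that $I_e^*(\sigma_{-e})=J_e$, precisely because $\sigma_{-e}=I_e^*(J_e)$ is the balancing condition of Definition~\ref{df:nondegenerate}. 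Hence $\phi_e=f_e-I_e^*(f_{-e})$, which is the initial data to which Proposition~\ref{ARNPJP} applies. By the symmetric balancing $\sigma_e=I_{-e}^*(J_{-e})$ on the opposite side of every node, the analogous splitting holds on $\gamma_{-e}$ with $f_{-e}$ playing the same role. Moreover, the convergence of the recursive series for $\Phi_X[m]$ in Proposition~\ref{approx1}, together with the bound~\eqref{eq:sigmabound}, ensures that the $L^\infty$ norm $|\underline f|$ on the unit circles $|z_e|=1$ is uniformly bounded in $w,u,\us$ sufficiently small.

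Third, I would combine these with Proposition~\ref{ARNPJP}: using that $f_e$ vanishes to order $\ge m$ at $q_e$ for every $e$, the effective order of the collection $\underline f$ is $m$, and the proposition yields
$$
||\omega_X[m]||_{\widehat C^v_\us}\le M\,|\underline f|\,\sqrt{|\us_v|}^{\,m+1}.
$$
To pass from the $L^2$ norm on $\widehat C^v_\us$ to the $L^2$ norm on a compact $K\subset C^v\setminus\cup_{e\in E_v}\{q_e\}$, I would observe that $\omega_X[m]$ is holomorphic on $\widehat C^v_\us$ and that, by the explicit Cauchy kernel representation~\eqref{int},~\eqref{int1} used in the construction of the ARN solution, its values on the portion of $K$ lying a fixed distance from every $q_e$ are dominated by integrals of $\phi_e$ against kernels $K^{re}_v,K^{im}_v$ which are bounded on $K$. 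The high-order vanishing of $f_e$ then propagates through the kernel integral to give the desired pointwise control on $K$.

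The main obstacle is the last step: a direct application of Proposition~\ref{ARNPJP} to the pair $(f_e,f_{-e})$ would yield only $|\us|^{1/2}$ because $f_{-e}$ is a priori only of order zero at $q_{-e}$. Upgrading this to $|\us|^{(m+1)/2}$ on a compact $K$ bounded away from all the $V_e$ requires isolating the ``small'' contribution $f_e$ from the ``bounded but localized'' contribution $I_e^*(f_{-e})$ and observing that the latter, which extends as a holomorphic form on the annulus $V_e\setminus U_e^{s_e}$, contributes to the ARN solution on $K$ only through Cauchy integrals whose kernels are uniformly bounded and whose integration contours remain a fixed distance from $K$, so that the full $|\us|^{(m+1)/2}$ rate dictated by $f_e$ survives on $K$.
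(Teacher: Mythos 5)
Your overall strategy is the same as the paper's: realize $\Psi_X-\Phi_X[m]$ as minus the ARN solution of the jump problem whose initial data is the mismatch of $\Phi_X[m]$ across the seams, and then bound that solution via Proposition~\ref{ARNPJP}; the identification step (gluing, periods, and the uniqueness underlying Lemma~\ref{lm:psiphi}) is fine. The gap is in the treatment of the jump data. Your decomposition $\Phi_X[m]^{v(e)}=J_e+f_e$ near $q_e$ is not correct: by Definition~\ref{df:nondegenerate} the differential $\Phi_X[m]^{v(e)}$ also carries a singular part $\sigma_e$ of order up to $m+1$ at $q_e$, prescribed by balancing against the jet $J_{-e}$ at the opposite preimage --- the balancing condition holds at \emph{both} preimages of every node, as noted right after the definition. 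Using both conditions, a direct comparison of Laurent coefficients shows that every coefficient of $\phi_e$ in degrees $-m-1,\dots,m-1$ cancels: the polar degrees $\le -2$ cancel by the balancing at $q_e$, the degree $-1$ term by the opposite residues, and the degrees $0,\dots,m-1$ by the balancing at $q_{-e}$ (equation~\eqref{sigma-ek}). Hence $\phi_e=f_e-I_e^*(f_{-e})$ holds with $f_e$ and $f_{-e}$ being the tails of order $\ge m$ at $q_e$ and at $q_{-e}$ respectively (your version of this identity, with $f_{-e}$ the full regular part, is off by exactly the higher-order polar part of $\Phi_X[m]^{v(e)}$ at $q_e$, which cancels only through the symmetric balancing you mention but do not use). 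With this, $\ord\uf\ge m$, Proposition~\ref{ARNPJP} applies verbatim, and its $L^2$ bound on $\widehat C^v_\us\supset K$ already gives~\eqref{moder1}; no separate handling of the compact set $K$ is needed.

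By contrast, with your choice of $f_{-e}$ of order $0$ at $q_{-e}$, the final ``upgrading'' step genuinely fails for $m\ge 2$. The contribution of $-I_e^*(f_{-e})$ to the Cauchy integrals defining the ARN solution at a point $p\in K$ is an integral over $\gamma_e$ of a density of size $O(1)$ along a seam of length $O(\sqrt{|s_e|})$; even after exploiting $\int_{\gamma_e}\phi_e=0$, which removes the constant term of the kernel and gains one more factor $\sqrt{|s_e|}$, the term proportional to $s_e\,u_{0,-e}\,\partial_q K^{re}_v(p,q_e)$ survives, so this piece is genuinely of order $|s_e|$, not $|\us|^{(m+1)/2}$; boundedness of the kernel and distance of the contour from $K$ cannot remove it. The asymmetric splitting~\eqref{fek},\eqref{f-ek}, with order-$m$ vanishing only on one side, is what occurs in the proof of Theorem~\ref{thm:zeroes}, and there it indeed yields only a $\sqrt{|\us|}$ gain, the remaining smallness coming from the scales $\mu_k$; for the $m$-balanced approximation there are no scales, and the full rate must come from the two-sided balancing, which is precisely what it was designed to provide.
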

This lemma says that on those components $C^v$ where the limit RN differential $\Psi$ is not identically zero (that is, on $C^{(0)}$), $\Phi_X[m]$ approximates $\Psi_X$ up to order $|\us|^{(m+1)/2}$. In the same spirit as before, we can also approximate $\Psi_X|_{C^{(>0)}}$ with the same precision, by using the order of vanishing stratification.

More precisely, suppose $X_k$ is a jet-convergent sequence, as defined in definition~\ref{df:jetconvlambda}, with multi-scale $\mu^{(0)},\dots,\mu^{(L)}$. We define a new sequence of differentials as follows.
\begin{notat}
For any $0\le\lambda\le L$ we denote $\Psi_k[m]^{(\le \lambda)}$ and $\Psi_k[m]^{(>\lambda)}$ the RN differentials on $C_k^{(\le\lambda)}$ and $C_k^{(>\lambda)}$, respectively, whose only singular parts are as follows:
\begin{itemize}
\item $\sigma_{\ell,k}$, at each $p_\ell\in C^{(\le \lambda)}$, for the differential $\Psi_k[m]^{(\le \lambda)}$;
\item at every $q_e$,$e\in E^{(\lambda)}$, the differential $\Psi_k[m]^{(\le \lambda)}$ has singular part $\sigma_{e,k}[m]$ of order at most $m+1$, with residue $ic_{e,k}$;
\item at every $q_{-e}, e\in E^{(\lambda)}$, the differential $\Psi_k^{(>\lambda)}$ has singular part $\sigma_{-e,k}[2m]$ of order at most $2m+1$;
\item the following enhanced balancing condition holds:
\begin{equation}\label{balancingmarch}
\sigma_{e,k}[m]+J_e^{[2m]}(\Psi_k[m]^{(\le \lambda)})=I_e^*\left(\sigma_{-e,k}[2m]+J_{-e}^{[m]}(\Psi_k[m]^{(>\lambda)})\right),
\end{equation}
where $J_e^{[m]}(\cdot)$ is the polar term of order $-1$ plus the $m$-jet of the regular part of the corresponding differential.
\end{itemize}
\end{notat}
What the enhanced balancing condition says is that $\sigma_{e,k}[m]$ is the singular part that is $m$-balanced with the $m$-jet of $\Psi_k[m]^{(>\lambda)}$ at $q_{-e}$, while $\sigma_{-e,k}[2m]$ is the singular part that is $2m$-balanced with the $2m$-jet of $\Psi_k[m]^{(>\lambda)}$ at $q_e$. Viewing it this way, the original balancing condition~\eqref{sigma-ek} requires $0$-balancing in one direction, and $m$-balancing in the other direction.

The existence of such a pair of differentials $\Psi_k[m]^{(\le\lambda)}$ and $\Psi_k[m]^{(>\lambda)}$ requires a proof, as unlike the case of $\Psi_k^{(\le\lambda)}$, which is defined directly, and $\Psi_k^{(>\lambda)}$, which is defined by prescribing its singular parts, the differentials $\Psi_k[m]^{(\le\lambda)}$ and $\Psi_k[m]^{(>\lambda)}$ must satisfy the enhanced balancing condition, which restricts the singularities of both of them. However, arguing the same way as in the proof of existence of $m$-balanced approximations, one can prove that such differentials exist. Essentially the argument again boils down to noticing the extra powers of $s_{e,k}$ appearing in front of the singular parts $\sigma_{e,k}[m]$ and $\sigma_{-e,k}[2m]$, which imply the uniqueness of solution. Then since the enhanced balancing condition is a non-homogeneous system of $\RR$-linear equations, it follows that this system is non-degenerate, and has a solution. Similarly to our proof of proposition~\ref{approx1}, one can construct these differentials explicitly as sums of recursively defined series --- which essentially amounts to inverting a linear operator, as a recursively defined series with terms decaying as powers of $s$.
\begin{df}
For a given jet-convergent sequence $X_k$, we let $\Phi_k[m]^{(\lambda+1)}$ be the $m$-balanced approximation of $\Psi_k[m]^{(>\lambda)}$ on $C_k^{(>\lambda)}$. We call then the collection of differentials $\Phi_k[m]^{(\lambda+1)}$ on $C^{(\lambda+1)}$ the {\em $m$-balanced approximation} on $X_k$.
\end{df}
The use of the enhanced balancing condition is to guarantee a still better bound on the initial data for the suitable jump problem, and the name of $m$-balanced approximation is justified by the following main result about it.
\begin{prop}
For any jet-convergent sequence $X_k$, for any $\lambda\ge 0$ and for any $C^v\subset C^{(\lambda+1)}$, the following inequality holds:
$$
  \mu_k^{(\lambda+1)} ||\Psi_k- \Phi_k[m]^{(\lambda+1)}||_{C^v}< M_1 |\us_k|^{(m+1)/2}.
$$
\end{prop}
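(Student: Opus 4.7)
The plan is to run a two-step approximation argument that parallels the proof of Theorem~\ref{thm:zeroes}, but extracts sharper estimates by exploiting the enhanced balancing condition~\eqref{balancingmarch}. First I would compare $\Psi_k$ with the spliced differential $\Psi_k[m]^{(\le\lambda)}\sqcup \Psi_k[m]^{(>\lambda)}$ by writing their difference as an ARN solution to a jump problem, and then compare $\Psi_k[m]^{(>\lambda)}$ with its $m$-balanced approximation $\Phi_k[m]^{(\lambda+1)}$ by directly invoking Lemma~\ref{lm:limapprox}.

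For the first step, I would pose on $C_k$ the \RHP with zero jumps on every seam internal to $C^{(\le\lambda)}$ or $C^{(>\lambda)}$, and with jump on $\gamma_e$, for $e\in E^{(\lambda)}$, equal to $\Psi_k[m]^{(\le\lambda)}|_{\gamma_e}-I_e^*\bigl(\Psi_k[m]^{(>\lambda)}|_{\gamma_{-e}}\bigr)$; let $\omega_k^{(\lambda)}[m]$ denote its ARN solution. The same Stokes-theorem uniqueness argument as in the proof of Theorem~\ref{thm:limits} identifies $\Psi_k=(\Psi_k[m]^{(\le\lambda)}\sqcup\Psi_k[m]^{(>\lambda)})-\omega_k^{(\lambda)}[m]$. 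The key observation is that the balancing condition~\eqref{balancingmarch} allows one to rewrite the jump initial data as $f_{e,k}|_{\gamma_e}-I_e^*(f_{-e,k}|_{\gamma_{-e}})$, where $f_{e,k}$ is holomorphic on $V_e$ with a zero of order at least $2m$ at $q_e$, and $f_{-e,k}$ is holomorphic on $V_{-e}$ with a zero of order at least $m$ at $q_{-e}$. Proposition~\ref{ARNPJP} then bounds
$$
\|\omega_k^{(\lambda)}[m]\|_{\widehat C^v_{\us_k}}\le M|\uf|\,\sqrt{|\us_{k,v}|}^{\,m+1}
$$
for any $C^v\subset C^{(\lambda+1)}$, since $\ord\uf\ge m$.

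For the second step, Lemma~\ref{lm:limapprox} applied to the RN differential $\Psi_k[m]^{(>\lambda)}$ on the subcurve $C^{(>\lambda)}$ yields $\|\Psi_k[m]^{(>\lambda)}-\Phi_k[m]^{(\lambda+1)}\|_K\le M_2|\us_k|^{(m+1)/2}$ on any compact $K\subset C^v\setminus\cup_{e\in E_v}\{q_e\}$. Restricting to $C^v\subset C^{(\lambda+1)}$ and writing $\Psi_k-\Phi_k[m]^{(\lambda+1)}=\bigl(\Psi_k[m]^{(>\lambda)}-\Phi_k[m]^{(\lambda+1)}\bigr)-\omega_k^{(\lambda)}[m]$, the triangle inequality combines the two bounds into a single estimate of order $|\us_k|^{(m+1)/2}$, up to the factor $|\uf|$ in the ARN bound.

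It remains to carry the scaling by $\mu_k^{(\lambda+1)}$ through the estimate. Since by construction $\mu_k^{(\lambda+1)}\Psi_k[m]^{(>\lambda)}$ converges to the nontrivial differential $\Phi_k[m]^{(\lambda+1)}$ on $C^{(\lambda+1)}$, all Laurent coefficients of $\mu_k^{(\lambda+1)}\Psi_k[m]^{(>\lambda)}$ at each $q_{-e}$ are bounded uniformly in $k$; the analogous statement holds on the other side via Lemma~\ref{lm:comparePsis} and the inductive use of Lemma~\ref{lm:onemore} for $\mu_k^{(\lambda)}\Psi_k[m]^{(\le\lambda)}$. Consequently $\mu_k^{(\lambda+1)}|\uf|$ remains bounded, and multiplying the combined bound by $\mu_k^{(\lambda+1)}$ yields the desired inequality. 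The main obstacle I anticipate is a careful bookkeeping of the two ingredients entering $|\uf|$: one must verify that the $|f_{e,k}|$ coming from the $C^{(\le\lambda)}$ side scales like $(\mu_k^{(\lambda)})^{-1}|\us_k|^{(2m-m)/2}$, and invoke Lemma~\ref{lm:onemore} precisely to convert the $(\mu_k^{(\lambda)})^{-1}$ factor into the needed power of $\mu_k^{(\lambda+1)}$, exactly as done in the proof of Theorem~\ref{thm:zeroes} around~\eqref{bound-fek}.
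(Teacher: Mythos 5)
Your plan coincides with the paper's own (sketched) argument: splice $\Psi_k[m]^{(\le\lambda)}$ and $\Psi_k[m]^{(>\lambda)}$, identify $\Psi_k$ as this pair minus the ARN solution of the jump problem across the seams $\gamma_e$, $e\in E^{(\lambda)}$, use the enhanced balancing condition~\eqref{balancingmarch} to gain the extra orders of vanishing in the initial data (order $2m$ on the $C^{(\le\lambda)}$ side, order $m$ on the other), bound that ARN solution via Proposition~\ref{ARNPJP}, and compare $\Psi_k[m]^{(>\lambda)}$ with its $m$-balanced approximation $\Phi_k[m]^{(\lambda+1)}$ via Lemma~\ref{lm:limapprox}. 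The only caveat is that your intermediate claim that $\mu_k^{(\lambda+1)}|\uf|$ stays bounded is false for the $f_{e,k}$ coming from the $C^{(\le\lambda)}$ side (only $\mu_k^{(\lambda)}|f_{e,k}|$ is bounded), but your closing remark --- converting the $(\mu_k^{(\lambda)})^{-1}$ factor through Lemma~\ref{lm:onemore} and absorbing the loss into the extra $|s_{e,k}|^{m}$ of vanishing, exactly as around~\eqref{bound-fek} --- is precisely how the paper handles this point, so the approach is essentially identical.
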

The proof is completely parallel to the proof of theorem~\ref{thm:zeroes}, and as we do not require this proposition for the proof of our main result, theorem~\ref{thm:zeroes}, we do not give the full details of the proof, just indicating the outline, for possible future applications.
\begin{proof}[Idea of the proof]
Mimicking the proof of theorem~\ref{thm:zeroes} one first needs to prove the statement analogous to lemma~\ref{lm:comparePsis}, showing that the multi-scale-$\mu$ limits of $\Psi_k$ and $\Psi_k[m]^{(\le\lambda)}$ are the same. The proof is by using the ARN solution of the \RHP, and noticing that the presence of positive powers of $s_{e,k}$ in each singular part $\sigma_{e,k}$ of $\Psi_k[m]^{(\le\lambda)}$ at $q_e$ for $e\in E^{(\lambda)}$, as given by the enhanced balancing condition, ensures that in the limit as $k\to\infty$ these singular parts go to zero. One then shows, similarly to the main part of the proof of theorem~\ref{thm:zeroes}, that the RN differential $\Psi_k$ is equal to
$$
  \Psi_k=\left(\Psi_k[m]^{(\le\lambda)}\sqcup \Psi_k[m]^{(>\lambda)}\right)-\omega_k[m]^{(\lambda)},
$$
where $\omega[m]^{(\lambda)}$ is the ARN solution of the jump problem with zero jumps at all seams
corresponding to internal nodes $e\in I_{C^{(\le\lambda)}}$ and $e\in I_{C^{(>\lambda)}}$, and with the jump on seam $\gamma_e$ for any $e\in E^{(\lambda)}$ given by
\begin{equation}
  \left.\Psi_k[m]^{(\le\lambda)}\right|_{\gamma_e}-I_e^*\left(\left.\Psi[m]_k^{(>\lambda)}\right|_{\gamma_{-e}}\right),
\end{equation}
Identically to the arguments in the proof of \eqref{boundomegaelll}, one can verify that the enhanced balancing condition~\eqref{balancingmarch} gives indeed the extra $m$-th power of $s_{e,k}$ in the bounds for the initial data for this \RHP, so that both upper bounds~\eqref{bound-fek} and~\eqref{bound-f-ek} are improved by an extra factor of $|s_{e,k}|^m$. Thus another application of the bound for the ARN solution of the \RHP, given by proposition~\ref{ARNPJP}, guarantee that there exists a constant $M$ such that for any $C^v\subset C^{(\lambda+1)}$ the inequality
$$
  \mu_k^{(\lambda+1)}||\omega[m]^{(\lambda)}||_{C^v}< M |\us_k|^{(m+1)/2}
$$
holds. The proposition thus follows.
\end{proof}
By taking the limit as $k\to\infty$, the proposition of course implies that in any jet-convergent sequence the scale-$\mu^{(\lambda)}$ limit of $\Phi_k[m]^{(\lambda)}$ on $C^{(\lambda)}$ is equal to the twisted differential $\Phi^{(\lambda)}$ that appears in theorem~\ref{thm:zeroes}. Moreover, the bound in the proposition then shows that the collection of differentials $\Phi_k[m]^{(\lambda)}$ for all $0\le \lambda\le L$ gives an approximation to $\Psi_k$ which, after scaling by the corresponding scale $\mu^{(\lambda)}$ is still within $|s|^m$. This information allows the study of differentials $\Psi_k$ in a degenerating sequence of jet curves with arbitrary precision; in more generality, the method of considering $m$-balanced approximations can also be applied to studying degenerations of other kinds of differentials on sequences of degenerating Riemann surfaces.

\end{document}